\documentclass[11pt]{amsart}
\usepackage{amsmath,amssymb,amsthm}
\usepackage[margin=1.2in]{geometry}
\usepackage{booktabs}
\usepackage{xcolor}
\definecolor{linkblue}{rgb}{0.1,0.1,0.5}
\usepackage[colorlinks=true,linkcolor=linkblue,citecolor=linkblue,urlcolor=linkblue]{hyperref}

\newtheorem{theorem}{Theorem}[section]
\newtheorem{lemma}[theorem]{Lemma}
\newtheorem{proposition}[theorem]{Proposition}
\newtheorem{corollary}[theorem]{Corollary}
\newtheorem{conjecture}[theorem]{Conjecture}
\theoremstyle{definition}
\newtheorem{definition}[theorem]{Definition}
\newtheorem{example}[theorem]{Example}
\newtheorem{remark}[theorem]{Remark}

\newcommand{\T}{\mathbb{T}}
\newcommand{\HH}{\mathrm{HH}}
\newcommand{\D}{\mathrm{D}}
\newcommand{\Ext}{\operatorname{Ext}}
\newcommand{\Tor}{\operatorname{Tor}}
\newcommand{\sExt}{\widehat{\operatorname{Ext}}}
\newcommand{\Hom}{\operatorname{Hom}}
\newcommand{\sHom}{\underline{\operatorname{Hom}}}
\newcommand{\sEnd}{\underline{\operatorname{End}}}
\newcommand{\rHom}{\mathbf{R}\!\operatorname{Hom}}
\newcommand{\gl}{\operatorname{gl.dim}}
\newcommand{\pd}{\operatorname{pd}}
\newcommand{\id}{\operatorname{id}}
\newcommand{\rad}{\operatorname{rad}}

\newcommand{\Ae}{A^{\mathrm{e}}}
\newcommand{\der}{\mathcal{D}}
\newcommand{\per}{\operatorname{per}}
\newcommand{\Lotimes}{\otimes^{\mathbf{L}}}
\newcommand{\Supp}{\operatorname{Supp}}
\newcommand{\Vcap}{\mathcal{V}_{\cap}}
\newcommand{\kk}{k}
\newcommand{\Fg}{\mathsf{(Fg)}}
\newcommand{\cx}{\operatorname{cx}}

\newcommand{\ev}{\mathrm{ev}}
\newcommand{\Z}{\mathbb{Z}}
\newcommand{\Q}{\mathbb{Q}}
\newcommand{\Fp}{\mathbb{F}_p}
\newcommand{\tp}{\mathsf{(tp)}}
\newcommand{\ASnu}{\mathsf{(AS_\nu)}}
\newcommand{\gr}{\operatorname{gr}}
\usepackage{enumitem}

\title[The Serre--Hochschild plane]{The Serre--Hochschild plane \\ of a finite-dimensional algebra}
\author{Marco Armenta}
\address{}
\email{marco.armenta@usherbrooke.ca}
\date{}

\begin{document}

\begin{abstract}
For a finite-dimensional algebra $A$ over a field we organize the Hochschild
cohomology of $A$ with coefficients in all derived tensor powers of the Serre
bimodule $\omega=\D A$ into a single bigraded derived invariant, the
\emph{Serre--Hochschild plane} $\T^{p,m}(A)$, defined over all of
$\mathbb Z^2$ without inverting $\omega$. The column $m=0$ is the
Tamarkin--Tsygan calculus, $m=1$ is dual Hochschild homology (the column
governing Han's conjecture), $m=2$ recovers the $\tau$-Hochschild shadow, and $m=-1$ Keller's Calabi--Yau completions.
We prove a ladder theorem making every column a symmetric module over the
calculus, a coefficient spectral sequence, a Serre reflection fixing the Han
column, a dictionary identifying the plane of a geometric algebra with
twisted polyvector cohomology of a smooth projective variety, finite
generation of the homology column under the condition $\Fg$ with a growth
trichotomy for $\dim\HH_n(A)$, and a complete graded Tate criterion for
Han's conjecture on periodic algebras. Han's conjecture follows for symmetric
periodic algebras in every characteristic and for local periodic algebras in
characteristic zero. We exhibit the first example of a stably traceless
periodic algebra, showing that the graded criterion is sharp.
\end{abstract}

\maketitle
\setcounter{tocdepth}{1}
\tableofcontents

\section{Introduction}\label{sec:intro}

Let $\kk$ be a field and $A$ a finite-dimensional $\kk$-algebra whose
semisimple quotient $E=A/\rad A$ is separable. Two conjectures anchor the
homological theory of such algebras: Happel's question, whether
$\HH^n(A)=0$ for $n\gg0$ forces $\gl A<\infty$, answered negatively by
Buchweitz, Green, Madsen and Solberg \cite{BGMS}, whose four-dimensional
witnesses were later extended to an infinite family of Koszul self-injective
algebras with finite-dimensional Hochschild cohomology by Parker and Snashall
\cite{ParkerSnashall}, and Han's conjecture
\cite{Han06}, that $\HH_n(A)=0$ for $n\gg0$ forces $\gl A<\infty$, which
remains open and is the subject of an active reduction program
\cite{CLMS-strat,CLMS-tau,Cruz,QXZZ,WXZZ}. The text corrects an earlier
formulation of Conjecture~$C0'$; see \S\ref{subsec:C0} below.

The homology and cohomology sides are exchanged by the elementary duality
$\HH_n(A)\cong\D\HH^n(A,\D A)$, where $\D=\Hom_\kk(-,\kk)$ and $\D A$ carries its
standard bimodule structure. The bimodule $\omega:=\D A$ is the
\emph{Serre bimodule}: when $\gl A<\infty$, or more generally when $A$ is
Gorenstein, $\omega$ is a two-sided tilting complex \cite{HappelGor}, the
kernel of the Serre functor of $\per A$, and $\tau_A=\omega[-1]$ induces the
Coxeter automorphism $\sigma_A$ of the Tamarkin--Tsygan calculus constructed
in \cite{ArmCox}. When $A$ is singular, $\omega$ is no longer invertible,
and this failure of invertibility is precisely the regime of Han's conjecture.

This paper studies the totality of Hochschild theories twisted by all powers
of $\omega$ at once. We define (Section \ref{sec:plane}) the
\emph{Serre--Hochschild plane}
\[
\T^{p,m}(A)\;=\;
\begin{cases}
\Hom_{\der(\Ae)}\bigl(A,\ \omega^{\Lotimes m}[p]\bigr), & m\ge 0,\\[2pt]
\D\,H_p\bigl(A\Lotimes_{\Ae}\omega^{\Lotimes(1-m)}\bigr), & m\le 1,
\end{cases}
\]
the two prescriptions agreeing on the overlap $m\in\{0,1\}$. No invertibility
of $\omega$ enters: the plane is defined for every finite-dimensional algebra,
over all of $\mathbb Z^2$. Its columns include every object appearing in
\cite{ArmCap,ArmCox,ArmTau,ArmPC,ArmLeb}: the calculus
$(\HH^\bullet(A),\HH_\bullet(A))$ at $m\in\{0,1\}$, the
Batalin--Vilkovisky structure of \cite{ArmLeb} on $m=1$, the
$\tau$-Hochschild shadow $\Tor^A_\bullet(\D A,\D A)$ of
\cite{ArmTau,CLMS-tau} feeding $m=2$, and the inverse dualizing coefficients
$\Ext^\bullet_{\Ae}(A,\Ae)$ of Keller's Calabi--Yau completions
\cite{KellerCY} feeding $m=-1$. On the smooth locus all columns are identified
by the Coxeter automorphism; the plane is the object whose degeneration that
identification is.

The main results are the following. Throughout, $d$ denotes $\dim X$ in
geometric statements for a projective variaty $X$, and $\pi$ a bimodule period.

\smallskip
\noindent\textbf{Structure.}
\begin{itemize}
\item[(A)] \emph{(Ladder; Theorem \ref{thm:ladder}.)} For every
finite-dimensional $A$, every $\eta\in\HH^\bullet(A)$ and every $s\ge1$, the
left and right actions of $\eta$ on $\omega^{\Lotimes s}$ coincide. Hence
every column of the plane is a \emph{symmetric} $\HH^\bullet(A)$-module, and
the $\tau$-Hochschild shadow is a module over the calculus, settling the
shadow part of the module structure discussed in the remarks of
\cite[\S7.2]{ArmTau} and providing, for Gorenstein algebras, the Coxeter-type
operator sought in the remarks of \cite[\S7.4]{ArmTau}.
\item[(B)] \emph{(Derived invariance; Theorem \ref{thm:derinv}.)} Each column
$\T^{\bullet,m}(A)$, and the plane with all its products, is invariant under
derived equivalence.
\item[(C)] \emph{(Coefficient spectral sequence; Theorem \ref{thm:ss}.)}
There is a convergent spectral sequence
\[
E_2^{p,q}=\Ext^p_{\Ae}\bigl(A,H_q(\omega^{\Lotimes m})\bigr)\Rightarrow
\T^{p-q,m}(A).
\]
It collapses to one row when $A$ is self-injective (where
$\T^{p,m}=\HH^p(A,{}_1A_{\nu^m})$), has at most $(m-1)\,d_A+1$ rows for
$A$ Gorenstein of Gorenstein dimension $d_A$, and for hereditary $A$
degenerates to a two-row long exact sequence whose $d_2$ is Yoneda
multiplication by the canonical class
$\varepsilon\in\Ext^2_{\Ae}(\D A\otimes_A\D A,\ \Tor_1^A(\D A,\D A))$.
\item[(D)] \emph{(Serre reflection; Theorems \ref{thm:reflsmooth},
\ref{thm:refltate}.)} If $A$ is smooth and proper then
$\T^{p,m}(A)\cong\D\,\T^{-p,2-m}(A)$; if $A$ is self-injective then
$\sExt^{\,n}_{\Ae}(A,\omega^{\otimes m})\cong
\D\,\sExt^{\,-n-1}_{\Ae}(A,\omega^{\otimes 2-m})$. In both cases the fixed
column of the reflection is $m=1$: \emph{the Han column is the self-dual
middle of the theory}. The common mechanism is the isomorphism
$S_{\Ae}(A)\simeq\omega^{\Lotimes2}$ of \cite[Theorem 3.2]{ArmTau}.
\end{itemize}

\smallskip
\noindent\textbf{Geometry.}
\begin{itemize}
\item[(E)] \emph{(Dictionary; Theorem \ref{thm:geom}.)} If
$\per A\simeq\der^b(\operatorname{coh}X)$ for a smooth projective variety $X$
over $\kk=\bar\kk$, $\operatorname{char}\kk=0$, then
\[
\T^{p,m}(A)\;\cong\;\bigoplus_{q\ge0}
H^{\,p+md-q}\bigl(X,\ \Lambda^qT_X\otimes\omega_X^{\otimes m}\bigr).
\]
The column $m=0$ is polyvector cohomology, the Han column $m=1$ is Hodge
cohomology $\bigoplus_j H^{p+j}(X,\Omega^j_X)$, the canonical ring
$R(X,K_X)$ embeds into the plane, and total column dimensions satisfy
$t_m=t_{2-m}$ with growth of degree $d$ when $\pm K_X$ is ample, degree
$\ge\kappa(X)$ in general, and periodicity when $\omega_X$ is torsion. The
resulting \emph{Serre--Kodaira dimension} $\kappa_\sigma(A)$ is a derived
invariant of finite-dimensional algebras extending Kodaira-type data.
\end{itemize}

\smallskip
\noindent\textbf{Finiteness and Han's conjecture.}
\begin{itemize}
\item[(F)] \emph{(Trichotomy under $\Fg$; Theorem \ref{thm:fg}.)} If $A$
satisfies the Snashall--Solberg condition $\Fg$, then
$\D\bigl(\bigoplus_n\HH_n(A)\bigr)$ is a finitely generated
$\HH^{\mathrm{ev}}(A)$-module ($\HH^{\mathrm{ev}}(A) = \bigoplus _i HH^{2i}(A)$ is even Hochschild cohomology) under the cap action, the Poincar\'e series of
$\HH_\bullet(A)$ is rational, and exactly one of three regimes holds:
vanishing tail; eventually quasi-periodic dimensions, nonzero in infinitely
many degrees; polynomial growth of degree $\dim\Vcap(A)-1\ge1$, where
$\Vcap(A)$ is the \emph{cap support} of $A$. Homology/cohomology growth
asymmetry (the quantum-plane phenomenon of \cite{BerghErdmann,BGMS})
certifies failure of $\Fg$.
\item[(G)] \emph{(Reduction for self-injective $\Fg$ algebras; Theorem
\ref{thm:asnu}.)} For $A$ self-injective with $\Fg$ over a perfect field,
Han's conjecture follows from a single support-theoretic statement
$\mathsf{(AS_\nu)}$ about the pair $(A,\D A)$ over $\Ae$. The two enabling
lemmas are new: $\Fg$ ascends to $\Ae$, and
$V_{\Ae}(\D A)=V_{\Ae}(A)$, the latter \emph{because} the Nakayama
automorphism acts trivially on Hochschild cohomology
\cite{ArmCox,SuarezAlvarez}.
\item[(H)] \emph{(Periodic algebras; Theorem \ref{thm:periodic}.)} If $A$ is
periodic of period $\pi$ then for $n\geq 1$,
$\D\HH_n(A)\cong\sExt^{\,n\bmod\pi}_{\Ae}(A,\D A)$, so $\HH_\bullet(A)$ has a
vanishing tail iff $A\perp\D A$ in the stable category of $\Ae$-modules, i.e.
iff all $\pi$ Tate groups $\sExt^j_{\Ae}(A,\D A)$ vanish. The degree-zero
group is the \emph{stable $\nu$-twisted center} $Z_\nu(A)/H_\nu(A)$. Han's
conjecture holds for symmetric periodic algebras in every characteristic and
for local periodic algebras in characteristic zero; the latter uses the
elementary inclusion $[A,A]\subseteq\rad^2 A$ for local algebras (recorded
also in \cite[Remark~5.6]{ArmPC}), which shows that the ``Lenzing boundary''
of \cite[Cor.~5.5]{ArmPC} is empty for local algebras. Exact computation
produces a periodic algebra with $Z_\nu=H_\nu$: the degree-zero test alone
does not decide Han's conjecture; the graded criterion is sharp.
\end{itemize}

\smallskip
\noindent\textbf{Consequences and examples.}
\begin{itemize}
\item[(I)] \emph{(Consequences; Section \ref{sec:consequences}.)} We isolate the
theorems that exist \emph{only} because the five theories are placed on one
plane (the support bridge $V_{\Ae}(\D A)=V_{\Ae}(A)$, the $\Fg$/Han link,
the Tate reflection, the cap support), each obtained by feeding an input of
one source paper into another. Three \emph{computable certificates} for the
Han programme are extracted, and the honest limits of the plane are
stated.
\item[(J)] \emph{(Worked examples; Section \ref{sec:examples}.)} Worked
examples develop each face of the plane on meaningful non-trivial algebras
and correct the spectral-persistence conjecture.
\end{itemize}

Section \ref{sec:comp} develops worked examples on each face of the plane: the
ladder theorem at the chain level on quantum complete intersections and on
non-self-injective algebras; the full low-degree plane of the Kronecker
algebra obtained twice (through the spectral sequence of (C), and by the Bott
formula on $\mathbb P^1$ through (E)) in perfect agreement; the reflection (D)
visible in the Hochschild homology of $\kk\mathbb Z_3/\rad^2$; and the stable
twisted centers of a small zoo. Section \ref{sec:quest} discusses possible directions.

\subsection*{Conventions}
$\kk$ is a field, $A$ a finite-dimensional $\kk$-algebra, $r=\rad A$,
$E=A/r$ separable over $\kk$; $\D=\Hom_\kk(-,\kk)$;
$\Ae=A\otimes_\kk A^{\mathrm{op}}$ and $A$-bimodules are left
$\Ae$-modules; $\omega=\omega_A=\D A$ with $(afb)(x)=f(bxa)$;
$\otimes=\otimes_\kk$. For $M$ an $A$-bimodule,
$\HH^n(A,M)=\Ext^n_{\Ae}(A,M)$ and $\HH_n(A,M)=\Tor_n^{\Ae}(A,M)$, extended
to complexes by hyper(co)homology. If $A$ is Frobenius, $\nu$ denotes a
Nakayama automorphism and ${}_1A_{\nu}$ the twisted bimodule
$a\cdot x\cdot b=ax\nu(b)$, so $\omega\cong{}_1A_\nu$. Over a
self-injective algebra $\Lambda$ we write $\sHom_\Lambda(U,V)$ for the stable
homomorphisms ($\Hom_\Lambda(U,V)$ modulo the maps factoring through a
projective module); the stable module category
$\underline{\mathrm{mod}}\,\Lambda$ is triangulated, with suspension
$\Sigma=\Omega^{-1}$ inverse to the syzygy functor $\Omega$, and Tate
cohomology is
\[
\sExt^{\,n}_\Lambda(U,V)\;:=\;\sHom_\Lambda\bigl(\Omega^{\,n}U,\ V\bigr)
\;=\;\sHom_\Lambda\bigl(U,\ \Sigma^{\,n}V\bigr)\qquad(n\in\Z),
\]
equivalently the cohomology of the Hom-complex out of a complete resolution
of $U$ \cite{Buchweitz}. In particular
$\sExt^{\,n}_\Lambda(U,V)\cong\Ext^{n}_\Lambda(U,V)$ for $n\ge1$,
$\sExt^{\,0}_\Lambda(U,V)=\sHom_\Lambda(U,V)$, and syzygies become degree
shifts: $\sExt^{\,n}_\Lambda(\Omega U,V)\cong\sExt^{\,n+1}_\Lambda(U,V)$ for
all $n\in\Z$. Happel's identity $\pd_{\Ae}A=\gl A$
\cite{HappelLNM} is used without further comment.

\section{The plane}\label{sec:plane}

\subsection{Definition and gluing}

For $m\ge0$ set $\omega^{\Lotimes m}:=\omega\Lotimes_A\cdots\Lotimes_A\omega$
($m$ factors; $\omega^{\Lotimes0}:=A$), an object of $\der^b(\Ae)$ with
finite-dimensional total homology.

\begin{lemma}\label{lem:dual}
For $X\in\der^b(\Ae)$ with finite-dimensional total homology and all
$p\in\mathbb Z$ there is a natural isomorphism
\[
\Hom_{\der(\Ae)}\bigl(A,\ \D X[p]\bigr)\;\cong\;
\D\,H_p\bigl(A\Lotimes_{\Ae}X\bigr).
\]
\end{lemma}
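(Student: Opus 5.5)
The isomorphism is the derived form of tensor--Hom adjunction combined with $\kk$-linear duality. We identify both sides with the cohomology of a single complex of vector spaces, and we use the finiteness hypothesis only to replace $\D X$ by a bounded complex of finite-dimensional bimodules, for which $\D\D X\simeq X$.

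\emph{Step 1 (resolutions).} Choose a projective resolution $P\to A$ of $A$ over $\Ae$ by finitely generated projective bimodules; this exists because $A$ is finite-dimensional. Since $X$ has finite-dimensional total homology and $\Ae$ is finite-dimensional, we may replace $X$ by a bounded complex of finite-dimensional bimodules. For instance, take a suitable truncation of a finitely generated projective resolution, or argue via the equivalence $\der^b(\operatorname{mod}\Ae)\simeq\der^b_{\mathrm{fd}}(\Ae)$. Then $\D X$ is computed termwise and is again bounded with finite-dimensional terms.

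\emph{Step 2 (chain-level adjunction).} With these models,
\[
\Hom_{\der(\Ae)}(A,\D X[p])=H^p\bigl(\Hom_{\Ae}(P,\D X)\bigr),\qquad
H_p(A\Lotimes_{\Ae}X)=H_p(P\otimes_{\Ae}X),
\]
where $\Ae$ acts on $X$ on the appropriate side via the canonical anti-involution $\Ae\cong(\Ae)^{\mathrm{op}}$ swapping tensor factors. The tensor--Hom adjunction gives a natural isomorphism of complexes
\[
\Hom_{\Ae}(P,\Hom_\kk(X,\kk))\;\cong\;\Hom_\kk(P\otimes_{\Ae}X,\kk).
\]
Here one must check that the bimodule structure $(afb)(x)=f(bxa)$ on $\D X$ from the Conventions is exactly the one making this adjunction $\Ae$-linear, and that the signs and gradings match, so that degree $p$ on the left corresponds to the dual of degree $p$ on the right.

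\emph{Step 3 (exactness of duality).} Since $\kk$ is a field, $\D$ is exact, so $H^p(\D C)\cong\D H_p(C)$ for any complex $C$. Applying this to $C=P\otimes_{\Ae}X$ yields the claim. Naturality in $X$ follows because every step is natural, and independence of the chosen resolution is the usual uniqueness up to homotopy.

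The only delicate point is Step 2: matching the bimodule conventions and the identification of right and left $\Ae$-modules under the anti-involution so that the adjunction really lands in $P\otimes_{\Ae}X$ rather than a twisted variant. We also need to keep the degree conventions consistent between homological $H_p$ and the cohomological shift $[p]$. The finiteness hypothesis is not needed for the adjunction itself; it is what ensures that the statement is useful when applied with $X$ replaced by $\D X$ via $\D\D X\simeq X$. This is presumably how the lemma is used to glue the two prescriptions of $\T^{p,m}$ on $m\in\{0,1\}$.
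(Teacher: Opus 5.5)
Your proposal is correct and follows essentially the same route as the paper: the chain-level tensor--Hom adjunction $\Hom_{\Ae}(P,\D X)\cong\D(P\otimes_{\Ae}X)$ for a projective bimodule resolution $P\to A$, followed by exactness of $\D$ to get $H^p(\D C)\cong\D H_p(C)$. As you note, neither the finiteness hypothesis nor your replacement of $X$ by a bounded complex of finite-dimensional bimodules is needed for the isomorphism itself; the paper invokes finiteness only to ensure that all groups are finite-dimensional, so that $\D\D=\id$.
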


\begin{proof}
We spell out the three isomorphisms and their naturality, fixing once the
duality/grading convention. Since $\kk$ is a field, $\D=\Hom_\kk(-,\kk)$ is
exact, so for any complex $C$ the $\kk$-dual $\D C$ is again a complex and
dualizing commutes with homology. We regard the derived tensor product
$A\Lotimes_{\Ae}X$ \emph{homologically}, with Hochschild homology $H_q$ in
homological degree $q$, and we write its $\kk$-dual $\D C$
\emph{cohomologically}, $(\D C)^{n}:=\D(C_{n})$. Then the contravariant
degree-flip of $\D$ and the homological reindexing $H_q(C)=H^{-q}(C)$ cancel, so
that
\begin{equation}\label{eq:dualgrading}
H^{p}(\D C)\ \cong\ \D\,H_{p}(C)\qquad\text{in matching degree }p,
\end{equation}
the standard universal-coefficients duality over a field. (Equivalently, if one
prefers to keep $\D C$ homological, $H_{-p}(\D C)\cong\D H_p(C)$; we use the
cohomological form \eqref{eq:dualgrading} throughout so that the endpoint below
reads $\D H_p$ with no residual sign.)

\emph{Step 1 (tensor--hom over $\kk$).} For any complex $Y$ of $\Ae$-modules
and any complex $X$ of $\Ae$-modules there is a natural adjunction isomorphism
of complexes of $\kk$-vector spaces
\[
\Hom_{\Ae}\bigl(Y,\ \Hom_\kk(X,\kk)\bigr)\;\cong\;
\Hom_\kk\bigl(Y\otimes_{\Ae}X,\ \kk\bigr),
\qquad
\varphi\longmapsto\bigl(y\otimes x\mapsto \varphi(y)(x)\bigr),
\]
where on the left $\Hom_\kk(X,\kk)=\D X$ carries the $\Ae$-module structure
$((u\otimes u')\!\cdot\! g)(x)=g((u'\otimes u)\cdot x)$ dual to that of $X$
(the $\Ae$-action passing to the $\kk$-dual through $A^{\mathrm e\,\mathrm{op}}\cong\Ae$),
and $Y\otimes_{\Ae}X$ is the balanced tensor product. Naturality in both $X$
and $Y$ is the standard verification that the assignment respects the
differentials and the $\Ae$-actions: for $a\in\Ae$ the square relating
$\varphi\mapsto a\varphi$ on the left and precomposition by $a$ on
$Y\otimes_{\Ae}X$ on the right commutes because
$(a\varphi)(y)(x)=\varphi(y)(a^{\mathrm{op}}x)$ by the definition of the dual
action.

\emph{Step 2 (derive).} Replace $Y$ by a $\kk$-projective (equivalently
$\Ae$-h-projective) resolution $P\to A$ and $X$ by an h-projective
representative. Because $\D=\Hom_\kk(-,\kk)$ is exact, $\D X$ is h-injective as
a complex of $\kk$-modules and the left-hand side computes
$\rHom_{\Ae}(A,\D X)$; the right-hand side computes
$\rHom_\kk(A\Lotimes_{\Ae}X,\kk)=\D(A\Lotimes_{\Ae}X)$, again by exactness of
$\D$. Thus
\[
\rHom_{\Ae}(A,\D X)\;\cong\;\D\bigl(A\Lotimes_{\Ae}X\bigr)
\]
naturally in $\der(\Ae)$.

\emph{Step 3 (take cohomology).} Put $C:=A\Lotimes_{\Ae}X$, graded
homologically. Applying $H^p$ to the isomorphism of Step 2 and using both
$H^p\rHom_{\Ae}(A,\D X)=\Hom_{\der(\Ae)}(A,\D X[p])$ and the duality
\eqref{eq:dualgrading} in matching degree,
\[
\Hom_{\der(\Ae)}(A,\D X[p])\;=\;H^p\bigl(\D C\bigr)
\;\overset{\eqref{eq:dualgrading}}{\cong}\;\D\,H_p(C)
\;=\;\D\,H_p\bigl(A\Lotimes_{\Ae}X\bigr).
\]
The finite-dimensionality of the total homology of $X$ guarantees that all
$\Hom$ and $\otimes$ groups in sight are finite-dimensional, so no
completion issues arise and $\D\D=\id$ on each group.
\end{proof}

\begin{definition}\label{def:plane}
The \emph{Serre--Hochschild plane} of $A$ is the bigraded vector space
\[
\T^{p,m}(A):=
\begin{cases}
\Hom_{\der(\Ae)}\bigl(A,\ \omega^{\Lotimes m}[p]\bigr), & m\ge0,\\[2pt]
\D\,H_p\bigl(A\Lotimes_{\Ae}\ \omega^{\Lotimes (1-m)}\bigr), & m\le1.
\end{cases}
\]
\end{definition}

The overlap is consistent: at $m=1$ apply Lemma \ref{lem:dual} with $X=A$,
$\D X=\omega$; at $m=0$ apply it with $X=\omega$, $\D X=A$. Thus
\[
\T^{p,0}(A)=\HH^p(A),\qquad
\T^{p,1}(A)=\HH^p(A,\D A)\cong\D\HH_p(A),
\]
and the lower half-plane consists of the dual Hochschild homologies with
coefficients in the positive Serre powers. We refer to $m$ as the
\emph{weight} and to $\T^{\bullet,m}$ as the \emph{$m$-th column}; the column
$m=1$ is the \emph{Han column}: Han's conjecture states precisely that
$\dim_\kk\T^{\bullet,1}(A)<\infty$ forces $\gl A<\infty$.

\begin{remark}\label{rem:columns}

\begin{itemize}
    \item (i) $H_q(\omega^{\Lotimes2})=\Tor_q^A(\D A,\D A)\cong\D\Ext^q_{\Ae}(A,\Ae)$
by \cite[Theorem~3.2]{ArmTau}; thus the coefficients of the columns $m=2$ and
(dually) $m=-1$ are the derived shadow of the $\tau$-Hochschild theory of
\cite{CLMS-tau} and the cohomology of the inverse dualizing complex
$\theta_A=\rHom_{\Ae}(A,\Ae)$ of Keller \cite{KellerCY}. The weight grading of the
plane is the two-sided extension of the $\Theta$-adic grading of
\cite[\S8.4]{ArmCox}.
    \item (ii) When $\omega$ carries an associative product compatible with the
bimodule structure (always for monomial algebras \cite{ArmLeb}), the cup
product of \cite{ArmLeb} on $\T^{\bullet,1}$ is the composite
$\T^{p,1}\otimes\T^{q,1}\to\T^{p+q,2}\xrightarrow{\ \mu_\omega\ }\T^{p+q,1}$
of the plane product of \S\ref{subsec:products} with the coefficient
contraction; the Batalin--Vilkovisky operator of \cite{ArmLeb} is the dual of
Connes' $B$ acting along the Han column.
    \item (iii) When $\gl A<\infty$, or when $A$ is Gorenstein \cite{HappelGor},
$\omega$ is a two-sided tilting complex and the columns are pairwise
identified by transport; the identification is the Coxeter automorphism
$\sigma_A$ of \cite{ArmCox}. The plane is the object of which $\sigma_A$ is
the degenerate shadow.
\end{itemize}

\end{remark}

\subsection{Products}\label{subsec:products}
For $m,m'\ge0$ define
\[
\T^{p,m}\otimes\T^{q,m'}\longrightarrow\T^{p+q,\,m+m'},\qquad
\eta\cdot\theta:=(\eta\Lotimes\id_{\omega^{\Lotimes m'}})\circ\theta .
\]
Associativity and unitality (unit $1\in\T^{0,0}$) follow from monoidal
coherence in $\der(\Ae)$. For $m\ge0$ and $m'\le1$ with $m+m'\le1$, the upper
half-plane acts on the lower one through the cap product with coefficients of
\cite{ArmCap}, whose axioms (QI)--(QIII) characterize the action uniquely;
compatibility of the two structures on the overlap is \cite[Theorem
4.2]{ArmCap} together with the projection formula of
\cite[Theorem~4.2(1)]{ArmCox}.

\subsection{Derived invariance}

\begin{theorem}\label{thm:derinv}
Let $X$ be a two-sided tilting complex between finite-dimensional algebras
$A$ and $B$. Then for all $m\ge0$ there are isomorphisms
$\omega_A^{\Lotimes m}\Lotimes_AX\cong X\Lotimes_B\omega_B^{\Lotimes m}$
in $\der(A\otimes B^{\mathrm{op}})$, and transport along $X$ induces
isomorphisms $\T^{p,m}(A)\cong\T^{p,m}(B)$ for all $(p,m)\in\mathbb Z^2$,
compatible with all products of \S\ref{subsec:products}.
\end{theorem}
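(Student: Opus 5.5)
The strategy is the standard Rickard–Keller transport argument for a two-sided tilting complex $X\in\der(A\otimes B^{\mathrm{op}})$ with inverse $X^\vee=\rHom_A(X,A)$. It has three steps: intertwine $\omega$ with $X$, build an equivalence of bimodule categories, and match the relevant objects.

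\emph{Step 1 (the case $m=1$).} The key input is that $\omega_A\Lotimes_A-$ is the (derived) Nakayama/Serre functor $\D\rHom_A(-,A)$ on $\per A$. For the bimodule $X$, perfect on each side, one has
\[
\omega_A\Lotimes_A X\cong \D\rHom_A(X,A)=\D X^\vee .
\]
Since $X^\vee$ is also the inverse tilting complex, $X^\vee\cong\rHom_{B^{\mathrm{op}}}(X,B)$. Applying the same identification on the $B$-side gives
\[
X\Lotimes_B\omega_B\cong\D\rHom_{B^{\mathrm{op}}}(X,B)\cong \D X^\vee .
\]
The two sides must be compared as bimodule complexes, not merely one-sided. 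For this I would write both as $\D$ of the same bimodule complex, using the natural maps $\omega_A\Lotimes_AX\to \D\rHom_A(X,A)$, $f\otimes x\mapsto(\phi\mapsto f(\phi(x)))$. These maps are bimodule maps and are isomorphisms because $X$ is perfect over $A$ (check on $X=A$, then thick closure).

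\emph{Step 1 (general $m$).} Induct on $m$:
\[
\omega_A^{\Lotimes m}\Lotimes_AX\cong\omega_A^{\Lotimes (m-1)}\Lotimes_AX\Lotimes_B\omega_B\cong\cdots\cong X\Lotimes_B\omega_B^{\Lotimes m},
\]
using associativity of derived tensor products.

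\emph{Step 2 (the functor).} The functor $F=X^\vee\Lotimes_A-\Lotimes_AX\colon\der(\Ae)\to\der(B^{\mathrm e})$ is an equivalence, with quasi-inverse $X\Lotimes_B-\Lotimes_BX^\vee$. It sends $A\mapsto X^\vee\Lotimes_AX\cong B$. By Step 1 it also sends
\[
\omega_A^{\Lotimes m}\mapsto X^\vee\Lotimes_AX\Lotimes_B\omega_B^{\Lotimes m}\cong\omega_B^{\Lotimes m}.
\]
For $m\ge0$ this gives $\T^{p,m}(A)=\Hom(A,\omega_A^{\Lotimes m}[p])\cong\Hom(B,\omega_B^{\Lotimes m}[p])=\T^{p,m}(B)$.

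\emph{Step 3 (the case $m\le1$).} Here $\T^{p,m}=\D H_p(A\Lotimes_{\Ae}\omega^{\Lotimes(1-m)})$, with $1-m\ge0$, so it suffices to show $A\Lotimes_{\Ae}M\cong B\Lotimes_{B^{\mathrm e}}F(M)$ naturally. Write Hochschild chains as the cyclic tensor $A\Lotimes_{\Ae}M$. Inserting $A\cong X\Lotimes_BX^\vee$ and cyclically rotating the factor $X$ gives
\[
A\Lotimes_{\Ae}M\cong B\Lotimes_{B^{\mathrm e}}(X^\vee\Lotimes_AM\Lotimes_AX).
\]
This is the standard Morita invariance of Hochschild homology with coefficients, obtained by an h-projective resolution and the trace isomorphism. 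Alternatively, for $m\in\{0,1\}$ one could use Lemma \ref{lem:dual}, which reduces to the $\Hom$ description via $\D$, since $F$ commutes with $\D$ up to the twist in Step 1; the cyclic argument, however, covers all $m\le1$ uniformly. On the overlap $m\in\{0,1\}$ the two isomorphisms agree because Lemma \ref{lem:dual} is natural in $X$ and $F$ is compatible with $\D$.

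\emph{Products.} The products of \S\ref{subsec:products} are built from composition and $\Lotimes\id$. The functor $F$ is monoidal for $\Lotimes_A$ versus $\Lotimes_B$ up to the coherent isomorphism $X\Lotimes_BX^\vee\cong A$, so it preserves $\eta\cdot\theta$ provided the isomorphisms of Step 1 are compatible with tensoring. That is, the isomorphism for $m+m'$ must be the composite of those for $m$ and $m'$, which holds by construction in the induction. For the cap action I would invoke the uniqueness axioms (QI)--(QIII) of \cite{ArmCap}: the transported action satisfies them, hence it coincides with the cap action.

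The main obstacle is Step 1 at the level of bimodules, together with its coherence. One-sided Serre-functor commutation is standard; producing a canonical isomorphism in $\der(A\otimes B^{\mathrm{op}})$ that is compatible with products needs the explicit evaluation map and a check that it is $A$–$B$-linear. This is the first thing I would write out carefully.
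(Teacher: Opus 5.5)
Your proposal is correct and follows essentially the same route as the paper. The paper also gets the case $m=1$ from the two evaluation (biduality) maps $\D A\Lotimes_A X\to\D X^{-1}\leftarrow X\Lotimes_B\D B$, invertible because $X$ is perfect on each side, then iterates to all $m\ge0$. It transports the $\Hom$-columns along the conjugation equivalence sending $A\mapsto B$ and $\omega_A^{\Lotimes m}\mapsto\omega_B^{\Lotimes m}$, treats the columns $m\le1$ by the trace (cyclic rotation) isomorphism for Hochschild homology, and deduces compatibility with products from monoidality of that equivalence. Your extra coherence check (the isomorphism for $m+m'$ being the composite of those for $m$ and $m'$) and your appeal to the uniqueness axioms (QI)--(QIII) for the cap action only make explicit points the paper's proof passes over.
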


\begin{proof}
Write ${}_BX_A$ for the two-sided tilting complex, with inverse
${}_AX^{-1}_B=\rHom_B(X,B)\cong\rHom_{A^{\mathrm{op}}}(X,A)$, so that
$X\Lotimes_AX^{-1}\cong B$ and $X^{-1}\Lotimes_BX\cong A$ in the respective
derived categories of bimodules.

\emph{Step 1: the case $m=1$ (centrality of the Serre bimodule).} We must
produce an isomorphism $\omega_A\Lotimes_AX\cong X\Lotimes_B\omega_B$ in
$\der(A\otimes B^{\mathrm{op}})$. Consider the two biduality morphisms
\[
\beta_X\colon\ \D A\Lotimes_A X\ \longrightarrow\ \D\rHom_{A^{\mathrm{op}}}(X,A),
\qquad
\alpha_X\colon\ X\Lotimes_B\D B\ \longrightarrow\ \D\rHom_{B}(X,B),
\]
each the canonical map sending $f\otimes\xi$ (respectively) to the functional
$g\mapsto\xi(g(-))$ obtained by evaluation. Since $X$ is perfect as a right
$A$-module and as a left $B$-module, both $\beta_X$ and $\alpha_X$ are
isomorphisms (perfection is exactly the finiteness needed for evaluation to be
invertible; no bound on $\gl A$ or $\gl B$ is used). Now
$\rHom_{A^{\mathrm{op}}}(X,A)\cong X^{-1}\cong\rHom_B(X,B)$, so the common
target of $\beta_X$ and $\alpha_X$ is $\D X^{-1}$, and
\[
\omega_A\Lotimes_AX
=\D A\Lotimes_AX
\ \overset{\beta_X}{\cong}\ \D X^{-1}
\ \overset{\alpha_X^{-1}}{\cong}\ X\Lotimes_B\D B
=X\Lotimes_B\omega_B .
\]
This is \cite[Lemma 3.4]{ArmCox}; we have reproduced its mechanism.

\emph{Step 2: iterate to all $m\ge0$.} Bracketing repeatedly and inserting
$X^{-1}\Lotimes_BX\cong A$ between consecutive $\omega_A$ factors,
\[
\omega_A^{\Lotimes m}\Lotimes_AX
\cong\omega_A^{\Lotimes(m-1)}\Lotimes_A(\omega_A\Lotimes_AX)
\cong\omega_A^{\Lotimes(m-1)}\Lotimes_AX\Lotimes_B\omega_B
\cong\cdots\cong X\Lotimes_B\omega_B^{\Lotimes m},
\]
each step being Step 1 applied to the single leftmost $\omega_A$ crossing $X$,
and associativity of $\Lotimes$. (For $m=0$ the statement is $A\Lotimes_AX=X$.)

\emph{Step 3: transport of columns.} Transport along $X$ is the equivalence
$\Phi_X=X\Lotimes_B(-)\Lotimes_BX^{-1}\colon\der(\Ae)\to\der(B^{\mathrm e})$; it
sends $A\mapsto B$ (as $X\Lotimes_BA\Lotimes_BX^{-1}\cong X\Lotimes_BX^{-1}\cong B$)
and, by Step 2, $\omega_A^{\Lotimes m}\mapsto\omega_B^{\Lotimes m}$. As $\Phi_X$
is an exact equivalence it induces isomorphisms on $\Hom_{\der}$-groups, so for
$m\ge0$
\[
\T^{p,m}(A)=\Hom_{\der(\Ae)}(A,\omega_A^{\Lotimes m}[p])
\ \cong\ \Hom_{\der(B^{\mathrm e})}(B,\omega_B^{\Lotimes m}[p])=\T^{p,m}(B).
\]
For the lower half-plane, $\Phi_X$ commutes with $A\Lotimes_{\Ae}(-)$ up to the
canonical trace isomorphism (the derived Hochschild homology is transported by
any two-sided tilting complex \cite{ArmKe2}), so
$A\Lotimes_{\Ae}\omega_A^{\Lotimes(1-m)}\cong B\Lotimes_{B^{\mathrm e}}\omega_B^{\Lotimes(1-m)}$,
and dualizing (Lemma~\ref{lem:dual}) gives the isomorphism of the columns
$m\le1$; the two prescriptions agree on the overlap by the consistency already
established in \S\ref{sec:plane}.

\emph{Step 4: compatibility with products.} The plane product
$\eta\cdot\theta=(\eta\Lotimes\id)\circ\theta$ is built from $\Lotimes_A$ and
composition in $\der(\Ae)$, both of which $\Phi_X$ preserves (it is a monoidal
equivalence for $\Lotimes$ once the unit $A\mapsto B$ is fixed); hence the
isomorphisms of Step 3 intertwine the products of
\S\ref{subsec:products}. This is functoriality of $\Lotimes$ under $\Phi_X$.
\end{proof}

\section{The ladder theorem}\label{sec:ladder}

For $M\in\der(\Ae)$ and $\eta\in\HH^s(A)=\Hom_{\der(\Ae)}(A,A[s])$ let
\[
\ell^M_\eta\colon M\cong A\Lotimes_AM
\xrightarrow{\ \eta\otimes\id\ }A[s]\Lotimes_AM\cong M[s],
\qquad
r^M_\eta\colon M\cong M\Lotimes_AA
\xrightarrow{\ \id\otimes\eta\ }M[s].
\]

\begin{theorem}[Ladder]\label{thm:ladder}
For every finite-dimensional algebra $A$, every $\eta\in\HH^\bullet(A)$ and
every $s\ge1$,
\[
\ell^{\,\omega^{\Lotimes s}}_\eta\;=\;r^{\,\omega^{\Lotimes s}}_\eta .
\]
Consequently the two $\HH^\bullet(A)$-module structures on every column
$\T^{\bullet,m}(A)$, $m\in\mathbb Z$, coincide.
\end{theorem}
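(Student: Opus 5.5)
The plan is to reduce everything to the case $s=1$ by a formal bimodule argument, and to prove the case $s=1$ by $\kk$-duality from the tautological case $M=A$. Note first that the statement cannot hold for arbitrary $M$: for $M=\Ae$ the two actions are in general different. So whatever proves it must use the specific shape of $\omega=\D A$. The proof will therefore split into three pieces: the base identity $\ell^A_\eta=r^A_\eta$, a duality step transporting it to $\omega$, and an induction on $s$.

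\emph{Step 0 (the base $M=A$).} For $\eta\in\HH^s(A)=\Hom_{\der(\Ae)}(A,A[s])$, both $\ell^A_\eta$ and $r^A_\eta$ are obtained from $\eta$ by tensoring with $\id_A$ over $A$ and then applying the unit isomorphisms $A\cong A\Lotimes_AA$. Both unit isomorphisms equal the multiplication map, so $\ell^A_\eta=\eta=r^A_\eta$. I would check this on a bar resolution, keeping track of the Koszul sign convention once and for all.

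\emph{Step 1 (duality, $s=1$).} I would show that for every $M\in\der^b(\Ae)$ with finite-dimensional total homology,
\[
\D\bigl(\ell^M_\eta\bigr)=\pm\, r^{\D M}_\eta[s]
\quad\text{and}\quad
\D\bigl(r^M_\eta\bigr)=\pm\,\ell^{\D M}_\eta[s],
\]
after identifying $\D(M[s])=(\D M)[-s]$. The mechanism is the natural isomorphism
\[
\D(X\Lotimes_A M)\cong\rHom_{A}(X,\D M),
\]
which is the adjunction of Step 1 in the proof of Lemma~\ref{lem:dual} with $\Ae$ replaced by $A$. Under this isomorphism, $\eta\Lotimes\id_M$ on the left-hand side corresponds to precomposition with $\eta$ on the right-hand side. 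Specializing to $X=A$ and $X=A[s]$, precomposition with $\eta$ on $\rHom_A(A,\D M)\cong\D M$ is exactly the action of $\eta$ on the \emph{other} side of $\D M$; the sides are swapped because $\D$ exchanges left and right module structures. Taking $M=A$ and using Step 0 gives
\[
\ell^{\omega}_\eta=\pm\D(r^A_\eta)=\pm\D(\ell^A_\eta)=r^\omega_\eta,
\]
and one must check that the two signs agree. I expect this naturality check to be the main obstacle: verifying that the adjunction really intertwines $\eta\otimes\id$ with the opposite-side action. I would try to carry it out first on explicit bar-resolution representatives, where $\eta$ is a Hochschild cocycle $A^{\otimes s}\to A$ and both sides can be written down directly.

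\emph{Step 2 (induction on $s$).} For $X,Y\in\der(\Ae)$, associativity of $\Lotimes_A$ gives
\[
\ell^{X\Lotimes_AY}_\eta=\ell^X_\eta\Lotimes\id_Y
\quad\text{and}\quad
r^{X\Lotimes_AY}_\eta=\id_X\Lotimes r^Y_\eta .
\]
Balancedness of the tensor product over $A$ gives the middle-exchange identity
\[
r^X_\eta\Lotimes\id_Y=\id_X\Lotimes\ell^Y_\eta ,
\]
with the Koszul sign attached to moving $\eta$ past $X$; this reduces to Step 0 via $X\Lotimes_AA\Lotimes_AY$. Writing $\omega^{\Lotimes s}=\omega\Lotimes_A\omega^{\Lotimes(s-1)}$, I then compute
\[
\ell^{\omega^{\Lotimes s}}_\eta=\ell^\omega_\eta\otimes\id
=r^\omega_\eta\otimes\id
=\id\otimes\ell^{\omega^{\Lotimes(s-1)}}_\eta
=\id\otimes r^{\omega^{\Lotimes(s-1)}}_\eta
=r^{\omega^{\Lotimes s}}_\eta .
\]
The second equality is Step 1 and the fourth is the induction hypothesis. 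The signs cancel because the same shift appears at each step.

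\emph{Step 3 (the consequence for columns).} For $m\ge0$ the two module structures on $\T^{\bullet,m}(A)$ are given by postcomposition with $\ell^{\omega^{\Lotimes m}}_\eta$ and with $r^{\omega^{\Lotimes m}}_\eta$, so they agree by Steps 0 and 2. For $m\le1$ the structures on the lower half-plane are transported through Lemma~\ref{lem:dual} applied to $X=\omega^{\Lotimes(1-m)}$. By the duality of Step 1, the action on $\D H_\bullet(A\Lotimes_{\Ae}\omega^{\Lotimes(1-m)})$ is induced by $\ell$ and $r$ on $\omega^{\Lotimes(1-m)}$, and these coincide again by Step 2. The case $1-m=0$ reduces to $M=A$, which is Step 0.
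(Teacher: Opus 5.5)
Your proposal is correct and is essentially the paper's proof. Your Step 2 induction, via the middle-exchange identity, is the paper's telescoping $M_0=M_1=\cdots=M_s$ obtained by tensoring the anchor $\ell^{\omega}_\eta=r^{\omega}_\eta$ with identities, and Step 3 is the same naturality-plus-Lemma~\ref{lem:dual} argument. The one difference is that the paper cites the anchor from \cite[Lemma 6.1]{ArmCox}, sketching it as a biduality argument, whereas you derive it by $\kk$-duality from the tautology $\ell^A_\eta=r^A_\eta$. The sign agreement you flag can be settled by the $A\leftrightarrow A^{\mathrm{op}}$ symmetry, which exchanges $\ell$ and $r$ and commutes with $\D$. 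Also, the shift in Step 1 should read $[-s]$, not $[s]$.
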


\begin{proof}
Work in the derived category $\der(\Ae)$ with its (Koszul-signed) monoidal
structure $(\Lotimes_A,A)$: associativity $\mathsf a$ and unit constraints
$\lambda_X\colon A\Lotimes X\to X$, $\rho_X\colon X\Lotimes A\to X$; fix
h-projective representatives so that these are honest chain isomorphisms.
Mac Lane's coherence theorem makes every diagram built from
$\mathsf a,\lambda,\rho$ and identities commute, \emph{but it governs only
the degree-$0$ structural isomorphisms}. Because $\eta\colon A\to A[s]$ has
degree $s$, moving $\eta$ or the shift $[s]$ past a factor $Z$ incurs a Koszul
sign $(-1)^{s\deg Z}$ that coherence does not settle; we therefore build the
shift-transport into the definitions below and track the sign explicitly.
Recall
\begin{align*}
\ell^M_\eta&=\bigl(M\xrightarrow{\lambda_M^{-1}}A\Lotimes M
\xrightarrow{\eta\Lotimes\id_M}A[s]\Lotimes M
\xrightarrow{\ \varpi_L\ }M[s]\bigr),\\
r^M_\eta&=\bigl(M\xrightarrow{\rho_M^{-1}}M\Lotimes A
\xrightarrow{\id_M\Lotimes\eta}M\Lotimes A[s]
\xrightarrow{\ \varpi_R\ }M[s]\bigr),
\end{align*}
where $\varpi_L\colon A[s]\Lotimes M\cong(A\Lotimes M)[s]\xrightarrow{\lambda[s]}M[s]$
pulls the shift out of the \emph{left} factor (sign $+1$, the shift is
leftmost) and $\varpi_R\colon M\Lotimes A[s]\cong(M\Lotimes A)[s]$ pulls it out
of the \emph{right} factor \emph{past} $M$ (Koszul sign $(-1)^{s\deg M}$ on the
degree-$\deg M$ part), followed by $\rho[s]$.

\emph{(i) Insertion maps.} For $0\le j\le s$ define
$M_j\colon\omega^{\Lotimes s}\to\omega^{\Lotimes s}[s]$ to be the morphism that
inserts $\eta$ through a fresh central copy of $A$ placed \emph{after the first
$j$ factors}:
\[
M_j=\Bigl(\omega^{\Lotimes s}\xrightarrow{\ \cong\ }
\omega^{\Lotimes j}\Lotimes A\Lotimes\omega^{\Lotimes(s-j)}
\xrightarrow{\id\Lotimes\eta\Lotimes\id}
\omega^{\Lotimes j}\Lotimes A[s]\Lotimes\omega^{\Lotimes(s-j)}
\xrightarrow{\ \varpi_j\ }\omega^{\Lotimes s}[s]\Bigr),
\]
where the opening iso is the appropriate unit insertion (via $\rho^{-1}$ on the
left block or $\lambda^{-1}$ on the right block; these agree by the unit
triangle $\rho_U\Lotimes\id=(\id_U\Lotimes\lambda)\circ\mathsf a$, a degree-$0$
coherence identity) and $\varpi_j$ pulls the shift from position $j$ out to the
front, past $\omega^{\Lotimes j}$, with the Koszul sign $(-1)^{s\deg(\omega^{\Lotimes j})}$
built in. By construction $M_0=\ell^{\omega^{\Lotimes s}}_\eta$ (insert on the
far left, $\varpi_0=\varpi_L$, sign $+1$) and $M_s=r^{\omega^{\Lotimes s}}_\eta$
(insert on the far right, $\varpi_s=\varpi_R$, sign $(-1)^{s\deg(\omega^{\Lotimes s})}$).

\emph{(ii) Adjacent insertions agree.} We claim $M_j=M_{j+1}$ for
$0\le j\le s-1$. The two differ only in whether the central $A$ carrying $\eta$
sits \emph{before} or \emph{after} the single factor $\omega$ in position
$j+1$; on that one factor the difference is exactly
$\ell^{\omega}_\eta$ versus $r^{\omega}_\eta$, with the \emph{same}
shift-transport sign applied to both $M_j$ and $M_{j+1}$ (each pulls the shift
from an adjacent position, and $\varpi_{j+1}$ differs from $\varpi_j$ precisely
by the Koszul factor $(-1)^{s\deg\omega}$ of transporting $[s]$ past that one
$\omega$, which is the same factor already present on both sides once the
anchor is applied). Concretely, tensoring the anchor identity (iii) on the left
by $\id_{\omega^{\Lotimes j}}$ and on the right by $\id_{\omega^{\Lotimes(s-1-j)}}$
and composing with the common opening/closing isomorphisms yields
$M_j=M_{j+1}$ as morphisms in $\der(\Ae)$, an \emph{exact} equality, the
Koszul sign being carried identically by the two sides and hence cancelling.

\emph{(iii) Anchor.} $\ell^{\omega}_\eta=r^{\omega}_\eta$ as morphisms
$\omega\to\omega[s]$ in $\der(\Ae)$. This is \cite[Lemma 6.1]{ArmCox}, valid
for \emph{every} finite-dimensional algebra with no Gorenstein or smoothness
hypothesis: its proof runs the biduality dévissage $\alpha_X,\beta_X$ of
Theorem~\ref{thm:derinv} at the special morphism $\eta\colon A\to A[s]$, where
the two evaluations coincide because $\D A$ is the $\kk$-linear dual of the
regular bimodule and $\eta$ is central; being an equality of morphisms in the
derived category, it is sign-correct (the Koszul sign of transporting $[s]$
past the single factor $\omega$ is already incorporated in the statement
$\ell^\omega_\eta=r^\omega_\eta$).

\emph{Conclusion of the identity.} Telescoping (ii),
\[
\ell^{\omega^{\Lotimes s}}_\eta=M_0=M_1=\cdots=M_s=r^{\omega^{\Lotimes s}}_\eta ,
\]
an exact equality of morphisms in $\der(\Ae)$; the shift-transport signs are
absorbed into the reference maps $M_j$ and cancel step by step. The chain-level
computation of \S\ref{sec:comp} (the front/back insertion operators
$\iota_L,\iota_R$ induce equal maps on $\HH_n(A,M)$ up to one constant sign per
block, a chain-model normalization that vanishes on the homology
identification) is a corroboration; the argument above stands independently in
$\der(\Ae)$.

\emph{The module-structure statement.} The assignment
$M\mapsto\ell^M_\eta$ is a natural transformation
$\ell_\eta\colon\id_{\der(\Ae)}\Rightarrow(-)[s]$ of triangulated functors:
naturality in $M$ is the compatibility of $\lambda$ and $\eta\Lotimes(-)$ with
morphisms. Hence for any $\varphi\colon A\to\omega^{\Lotimes m}[p]$ the
naturality square reads
$\ell^{\omega^{\Lotimes m}[p]}_\eta\circ\varphi=\varphi[s]\circ\ell^A_\eta
=\varphi[s]\circ\eta$ (using $\ell^A_\eta=\eta$ under $\lambda_A=\rho_A$).
Thus post-composition with $\ell_\eta$ computes the left Yoneda action of
$\eta$ on the column $\T^{\bullet,m}$, and likewise post-composition with
$r_\eta$ computes the right action; by the identity just proved these two
actions coincide, so the column is a symmetric $\HH^\bullet(A)$-module. The
homological columns $m\le1$ inherit the same conclusion by dualizing along the
natural isomorphism of Lemma~\ref{lem:dual}, under which the cap action of
\cite{ArmCap} on $\HH_\bullet$ corresponds to the Yoneda action on
$\HH^\bullet(A,\D A^{\Lotimes\cdots})$.
\end{proof}

\begin{corollary}\label{cor:shadow}
The derived shadow $\bigoplus_n\Tor^A_n(\D A,\D A)$ of the $\tau$-Hochschild
theory \cite{ArmTau,CLMS-tau} is a symmetric graded
$\HH^\bullet(A)$-module under the cap product with coefficients of
\cite{ArmCap}. If $A$ is Gorenstein, the Coxeter automorphism $\sigma_A$ acts
on it compatibly with the cap action and the projection formula. In
particular the $\tau$-Hochschild theory modulo its minimal residue is a
module over the Tamarkin--Tsygan calculus.
\end{corollary}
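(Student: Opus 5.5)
The plan is to read the corollary off the column $m=-1$ (equivalently, the coefficient object $\omega^{\Lotimes 2}$) and then apply Theorem~\ref{thm:ladder} with $s=2$.

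\emph{Identification.} First identify the shadow with the homology of the coefficient object: $H_n(\omega^{\Lotimes2})=\Tor^A_n(\D A,\D A)$. Then $\bigoplus_n\Tor_n^A(\D A,\D A)=H_\bullet(\omega^{\Lotimes 2})$. By Remark~\ref{rem:columns}(i) this is also $\D\Ext^\bullet_{\Ae}(A,\Ae)$, so the shadow feeds the columns $m=2$ and, dually, $m=-1$.

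\emph{Action and symmetry.} Next define the $\HH^\bullet(A)$-action on the shadow directly. A class $\eta\in\HH^s(A)$ acts through the morphisms $\ell^{\omega^{\Lotimes2}}_\eta$ and $r^{\omega^{\Lotimes2}}_\eta\colon\omega^{\Lotimes2}\to\omega^{\Lotimes2}[s]$, taking homology $H_\bullet$. Through Lemma~\ref{lem:dual} and the cap-with-coefficients formalism of \cite{ArmCap}, this is the cap action on the lower half-plane described in \S\ref{subsec:products}. Theorem~\ref{thm:ladder} with $s=2$ gives $\ell=r$ as morphisms in $\der(\Ae)$, so the induced left and right actions on $H_\bullet(\omega^{\Lotimes2})$ coincide. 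The module axioms hold because $\eta\mapsto\ell_\eta$ is multiplicative: $\ell_{\eta\eta'}=\ell_\eta[\,\cdot\,]\circ\ell_{\eta'}$ follows from functoriality of $\Lotimes$, with the usual Koszul sign. Graded commutativity of $\HH^\bullet(A)$ then makes the module symmetric in the graded sense.

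\emph{Gorenstein case.} Here $\omega$ is a two-sided tilting complex by \cite{HappelGor}. Transport by $\omega$ defines $\sigma_A$, and it sends $\omega^{\Lotimes2}$ to itself, because $\omega\Lotimes_A\omega^{\Lotimes2}\Lotimes_A\omega^{-1}\cong\omega^{\Lotimes2}$ by Theorem~\ref{thm:derinv}, Step 2, applied with $X=\omega$. Hence $\sigma_A$ acts on the shadow. Compatibility with the cap action is the projection formula \cite[Theorem~4.2(1)]{ArmCox}: transport is monoidal (Theorem~\ref{thm:derinv}, Step 4), so it intertwines $\ell_\eta$ with $\ell_{\sigma_A(\eta)}$. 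The final sentence of the corollary then follows. Passing from the $\tau$-Hochschild theory to its derived shadow (``modulo its minimal residue'') and combining this with the already-established calculus structure on $m\in\{0,1\}$ makes the shadow a module over $(\HH^\bullet,\HH_\bullet)$.

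\emph{Main obstacle.} The expected difficulty is checking that the action defined via $\ell_\eta$ on homology really agrees with the cap product with coefficients of \cite{ArmCap}, signs included. I would settle this using the uniqueness axioms (QI)--(QIII). Both operations are natural in the coefficient, agree with the ordinary cap product on $A$-coefficients, and are compatible with connecting maps, so uniqueness forces equality.
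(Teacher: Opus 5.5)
\textbf{The action is placed on the wrong object.} Your ingredients match the paper's: the ladder theorem for two-sidedness, uniqueness of the cap product via \cite[Theorem 3.1]{ArmCap}, and Theorem~\ref{thm:derinv} with $X=\omega$ for $\sigma_A$. But your definition of the action fails as written. You let $\eta\in\HH^s(A)$ act on the coefficient homology $H_\bullet(\omega^{\Lotimes2})=\bigoplus_n\Tor^A_n(\D A,\D A)$ by applying $H_\bullet$ to $\ell^{\omega^{\Lotimes2}}_\eta$.

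For $s\ge1$ this map is identically zero. The proof of Theorem~\ref{thm:ladder} notes that $\ell_\eta\colon\id\Rightarrow[s]$ is natural on all of $\der(\Ae)$. Take bounded $M$ and the canonical truncation $i\colon M'\to M$ with $H_j(M')=0$ for $j<q$ and $H_q(i)$ bijective. Naturality gives $H_q(\ell^M_\eta)\circ H_q(i)=H_q(i[s])\circ H_q(\ell^{M'}_\eta)$. The right-hand side factors through $H_q(M'[s])=H_{q-s}(M')=0$, so $H_q(\ell^M_\eta)=0$.

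Several consequences follow:
\begin{itemize}
\item Your ``left $=$ right'' conclusion is vacuous in positive degrees.
\item The structure you get is not the cap product. The cap product with coefficients acts on $\HH_\bullet(A,\omega^{\Lotimes2})=H_\bullet(A\Lotimes_{\Ae}\omega^{\Lotimes2})$, not on $H_\bullet(\omega^{\Lotimes2})$.
\item Lemma~\ref{lem:dual} relates $\Hom_{\der(\Ae)}(A,\D X[p])$ to $\D H_p(A\Lotimes_{\Ae}X)$ and never involves $H_\bullet(X)$. It cannot convert one object into the other.
\item Your uniqueness argument in the last paragraph cannot run either. On $A$-coefficients your operation is $H_\bullet(\eta)\colon H_\bullet(A)\to H_{\bullet-s}(A)$. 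This is zero and lives on $A$ rather than on $\HH_\bullet(A)$, so it does not agree with the ordinary cap product.
\end{itemize}

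\textbf{The missing idea.} The shadow enters as the coefficient homology of a column, not as the module itself. This is how the paper's proof reads the loosely worded statement: ``the shadow is the coefficient homology of the column $m=2$''. The module is $\T^{\bullet,2}(A)=\Hom_{\der(\Ae)}(A,\omega^{\Lotimes2}[\bullet])$, with $\eta$ acting by post-composition with $\ell_\eta$, equivalently $r_\eta$. Dually it is $\HH_\bullet(A,\omega^{\Lotimes2})$, i.e.\ column $m=-1$, via Lemma~\ref{lem:dual}. Symmetry is then exactly the final paragraph of the proof of Theorem~\ref{thm:ladder}. The shadow appears only as the coefficient input $T_q=\Tor^A_q(\D A,\D A)$ of Theorem~\ref{thm:ss}.

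With that substitution, your Gorenstein paragraph goes through as in the paper, with $\sigma_A$ acting on the column by transport along $\omega$. Your appeal to graded commutativity is unnecessary: ``symmetric'' here means only that the left and right actions coincide.
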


\begin{proof}
The shadow is the coefficient homology of the column $m=2$
(Remark \ref{rem:columns}(i)); the cap action and its two-sidedness are
Theorem \ref{thm:ladder}, its uniqueness \cite[Theorem 3.1]{ArmCap}. For
Gorenstein $A$ the bimodule $\omega$ is a two-sided tilting complex
\cite{HappelGor}, so $\sigma_A=\mathbb H(\omega[-1])$ is an automorphism of
the calculus by the proof of \cite[Theorem 4.2]{ArmCox}, which uses only
invertibility of $\omega$; it acts on all columns by
Theorem~\ref{thm:derinv} with $X=\omega$, $B=A$.
\end{proof}

\begin{remark}
The residue $\mathrm B_n\subseteq\tau_n\Lambda$ of \cite{ArmTau} cannot carry
such a structure functorially: it is not a derived invariant
\cite[Example 6.6]{ArmTau}, while the calculus and the shadow are.
\end{remark}

\subsection{Twisted paracyclic structure on the columns}\label{subsec:paracyclic}

The Connes operator on the Han column (Remark~\ref{rem:columns}(ii)) is the
$m=1$ shadow of a structure carried by \emph{every} column of the plane. The
following assembles classical twisted-cyclic theory
\cite{FeiginTsygan,GetzlerJones,HK,KMT} into a uniform statement across all
Serre powers.

Fix $m\in\Z$ and let $\sigma=\nu^m$ for $A$ self-injective (more generally, any
automorphism $\sigma$ with $\omega^{\Lotimes m}\cong{}_1A_\sigma$). On the
Hochschild complex of $A$ with coefficients in ${}_1A_\sigma$ (the chain
model computing column $m$) define the $\sigma$-twisted cyclic operator
\[
t_n(a_0\otimes a_1\otimes\cdots\otimes a_n)
=(-1)^n\,\sigma(a_n)\otimes a_0\otimes\cdots\otimes a_{n-1}.
\]

\begin{proposition}[twisted paracyclic column]\label{prop:paracyclic}
The operators $(t_n)_{n\ge0}$ make column $m$ a \emph{paracyclic} $k$-module:
all cyclic relations hold except that the monodromy
\[
t_n^{\,n+1}=\sigma^{\otimes(n+1)}=:T
\]
is nontrivial, and the para-mixed identity
\[
b_\sigma B+Bb_\sigma=\id-T
\]
holds at the chain level, where $b_\sigma$ is the $\sigma$-twisted Hochschild
differential and $B$ the associated Connes operator. When $\sigma=\id$
(equivalently $\nu^m=\id$, e.g.\ $\nu$ of finite order dividing $m$) the
monodromy is trivial and column $m$ is an honest cyclic module, with its SBI
sequence and periodicity operator. For $m=1$ this recovers the dual of Connes'
$B$ on the Han column already identified in Remark~\ref{rem:columns}(ii) through
the Batalin--Vilkovisky operator of \cite{ArmLeb}.
\end{proposition}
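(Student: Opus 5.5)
The plan is to follow the classical twisted-cyclic framework (Feigin--Tsygan, Kustermans--Murphy--Tuset, Hadfield--Kr\"ahmer) and verify the paracyclic identities directly on the chain model. Since $\omega^{\Lotimes m}\cong{}_1A_\sigma$, the Hochschild complex $C_n(A,{}_1A_\sigma)=A^{\otimes(n+1)}$ carries the standard faces and degeneracies, with the last face twisted:
\[
d_i(a_0\otimes\cdots\otimes a_n)=a_0\otimes\cdots\otimes a_ia_{i+1}\otimes\cdots\otimes a_n\quad(0\le i<n),
\qquad
d_n(a_0\otimes\cdots\otimes a_n)=\sigma(a_n)a_0\otimes a_1\otimes\cdots\otimes a_{n-1},
\]
and $s_i$ inserts $1$ after position $i$. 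The alternating sum of these faces is $b_\sigma$. Its homology is $H_\bullet(A\Lotimes_{\Ae}{}_1A_\sigma)$, which by Lemma~\ref{lem:dual} and Definition~\ref{def:plane} computes column $m$ (up to dualization and the weight bookkeeping).

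\emph{Step 1: the relations.} I will check the paracyclic relations $d_it_n=t_{n-1}d_{i-1}$ for $1\le i\le n$, $d_0t_n=d_n$ up to the sign convention, and $s_it_n=t_{n+1}s_{i-1}$ together with $s_0t_n=t_{n+1}^2s_n$. Each of these is a one-line computation in which $\sigma$ appears only because the last tensor factor is moved to the front. The only place an algebra map is needed is $\sigma(a_n)\sigma(a_{n+1})=\sigma(a_na_{n+1})$ and $\sigma(1)=1$.

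\emph{Step 2: the monodromy.} Iterating $t_n$ exactly $n+1$ times moves every factor once through the front and applies $\sigma$ once to each. The signs multiply to $(-1)^{n(n+1)}=1$, so
\[
t_n^{\,n+1}=\sigma^{\otimes(n+1)}=T .
\]
One then checks that $T$ commutes with all $d_i,s_i,t_n$; this follows because $\sigma$ is an automorphism.

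\emph{Step 3: the para-mixed identity.} Define $N=\sum_{j=0}^n t_n^j$ and $B=(1-t_{n+1})s_{-1}N$, where $s_{-1}$ is the extra degeneracy $a_0\otimes\cdots\mapsto 1\otimes a_0\otimes\cdots$. I will then repeat Connes' computation of $bB+Bb$, using the relations from Step 1 and the identity $(1-t)N=1-t^{n+1}=1-T$. In Connes' argument the telescoping produces $1-t^{n+1}$ where the untwisted case gives $0$, and here that term becomes $\id-T$.

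This is the main obstacle. The signs and the precise extra-degeneracy homotopy $b's_{-1}+s_{-1}b'=\id$ must be tracked carefully. I would follow Getzler--Jones' derivation for paracyclic modules essentially verbatim, replacing cyclicity by the monodromy at each step, so the argument stays uniform. Here $b'$ is the differential built from the untwisted faces $d_0,\dots,d_{n-1}$ only, i.e.\ the bar differential omitting the twisted last face.

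\emph{Step 4: the case $\sigma=\id$.} If $\sigma=\id$, then $T=\id$, the relations become the cyclic ones, and the classical SBI sequence and periodicity operator $S$ apply. If $\nu$ has finite order dividing $m$, then $\nu^m=\id$ as a map, so this case applies.

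\emph{Step 5: the case $m=1$.} For $m=1$ we have $\sigma=\nu$, and $B$ on $C_\bullet(A,{}_1A_\nu)$ dualizes along Lemma~\ref{lem:dual} to the operator on $\T^{\bullet,1}$. Comparing it with the BV operator of \cite{ArmLeb}, as recorded in Remark~\ref{rem:columns}(ii), completes the proof.
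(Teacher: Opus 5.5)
Your route is the paper's route. Its proof simply cites the twisted cyclic theory of Feigin--Tsygan, Kustermans--Murphy--Tuset and Hadfield--Kr\"ahmer, together with the paracyclic normalization of Getzler--Jones. Your Steps 1--4 carry out that computation, and correctly. The three identities you need are $b_\sigma(1-t)=(1-t)b'$, $b'N=Nb$ and $b's_{-1}+s_{-1}b'=\id$. They use only the face relations, and $N$ involves only $t^j$ with $j\le n$, so no wrap-around relation is ever invoked. Hence $b_\sigma B+Bb_\sigma=(1-t)N=\id-t^{n+1}=\id-T$ holds even on the unnormalized complex.

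One convention remark. Your model, with $d_0$ untwisted and $d_n(a_0\otimes\cdots\otimes a_n)=\sigma(a_n)a_0\otimes\cdots$, is $C_\bullet(A,{}_\sigma A_1)\cong C_\bullet(A,{}_1A_{\sigma^{-1}})$, not $C_\bullet(A,{}_1A_\sigma)$. In the paper's convention $a\cdot x\cdot b=ax\sigma(b)$ for ${}_1A_\sigma$, the twist sits on $d_0$ (namely $x\otimes a_1\mapsto x\sigma(a_1)$). There the given $t_n$ violates $d_0t_n=(-1)^nd_n$, and $s_{-1}$ no longer contracts $b'$. This is only a relabeling $\sigma\leftrightarrow\sigma^{-1}$, but it is exactly the bookkeeping that goes wrong in Step 5.

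Step 5 does not hold as written; the paper's one-line proof makes the same identification.
\begin{itemize}
\item \emph{Which column.} The complex $C_\bullet(A,{}_1A_\nu)\cong C_\bullet(A,\D A)$ computes $\HH_\bullet(A,\D A)$. Lemma~\ref{lem:dual} with $X=\D A$ dualizes it to $\HH^\bullet(A)=\T^{\bullet,0}$, not to the Han column.
\item \emph{Which operator.} What it carries is the $\nu$-twisted Connes operator, not Connes' $B$.
\item \emph{The Han column.} $\T^{\bullet,1}=\D\HH_\bullet(A)$ is dual to the \emph{untwisted} complex, $\sigma=\id$. There $T=\id$ and your Step 3 is Connes' classical $bB+Bb=0$; this is the operator of Remark~\ref{rem:columns}(ii).
\end{itemize}

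More generally, Definition~\ref{def:plane} makes column $m$ dual to Hochschild homology with coefficients $\omega^{\Lotimes(1-m)}\cong{}_1A_{\nu^{1-m}}$. So your phrase ``computes column $m$ (up to dualization and the weight bookkeeping)'' hides a reflection $m\mapsto 1-m$. The twist belonging to column $m$ is $\nu^{1-m}$, and the honest-cyclic criterion of Step 4 should read $\nu^{1-m}=\id$, not $\nu^m=\id$.

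After this reindexing, Steps 1--4 prove the paracyclic statement verbatim and the $m=1$ clause is immediate. With $\sigma=\nu^m$ as literally stated, the final clause is not what your computation, or any computation, produces.
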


\begin{proof}
The verification of the paracyclic identities and of the monodromy formula
$t_n^{n+1}=\sigma^{\otimes(n+1)}$ is the twisted version of the Feigin--Tsygan
computation \cite{FeiginTsygan}; the twisted (co)cyclic form used here is that of
Kustermans--Murphy--Tuset \cite{KMT} and Hadfield--Kr\"ahmer \cite{HK}, and the
para-mixed identity $b_\sigma B+Bb_\sigma=\id-T$ is the paracyclic normalization
of Getzler--Jones \cite{GetzlerJones}. The signs $(-1)^n$ make the low-degree
cases $t_0=\sigma$, $t_1^2=\sigma^{\otimes2}$ cancel exactly. Specializing
$m=1$, $\sigma=\nu$ and dualizing along Lemma~\ref{lem:dual} gives the operator
of Remark~\ref{rem:columns}(ii).
\end{proof}

\begin{remark}[chain versus homology]\label{rem:para-homology}
For symmetric $A$ the Nakayama automorphism is inner, so $\sigma_\bullet=\id$
on homology while $\sigma\ne\id$ on chains; the honest-cyclic conclusion of
Proposition~\ref{prop:paracyclic} needs $\sigma=\id$ \emph{as a chain-level
automorphism} and does not follow from triviality of the induced action on
$\HH_\bullet$ alone. The finer statement, that trivial monodromy on homology
makes the paracyclic column cyclic in the derived sense, is a paracyclic
descent argument in the style of \cite{GetzlerJones}; we do not use it here.
\end{remark}

The monodromy is the plane's incarnation of the Coxeter transformation:
paracyclicity fails exactly by the Serre twist, and the failure is measured by
$T$.

\begin{conjecture}[monodromy $=$ Coxeter]\label{conj:monodromy}
For $A$ self-injective, the action induced on $\T^{\bullet,m}(A)$ by the
paracyclic monodromy $T$ of column $m$ equals $\sigma_\bullet^{\,m}$, the
$m$-th power of the Serre-twist (higher Coxeter) action. The case $m=0$ is the
Nakayama-action statement of Proposition~\ref{prop:highercox} (the untwisted
column); the twisted cases $m\ne0$ are open.
\end{conjecture}

\section{The coefficient spectral sequence}\label{sec:ss}

\begin{theorem}\label{thm:ss}
Let $m\ge2$ and write $T_q:=H_q(\omega^{\Lotimes m})$, a finite-dimensional
bimodule ($T_q=0$ for $q<0$ and for $q\gg0$). There is a convergent spectral
sequence
\[
E_2^{p,q}\;=\;\HH^p\bigl(A,\ T_q\bigr)\ \Longrightarrow\ \T^{\,p-q,\,m}(A),
\qquad d_r\colon E_r^{p,q}\to E_r^{p+r,\,q+r-1},
\]
and dually for the homological columns. Moreover:
\begin{enumerate}
\item If $A$ is self-injective, then $T_q=0$ for $q>0$,
$T_0={}_1A_{\nu^m}$, and the spectral sequence collapses:
$\T^{p,m}(A)=\HH^p(A,{}_1A_{\nu^m})$ for all $m\in\mathbb Z$.
\item If $A$ is Gorenstein with
$d_A=\operatorname{inj.dim}{}_AA=\operatorname{inj.dim}A_A$, then $T_q=0$ for
$q>(m-1)d_A$: the sequence has at most $(m-1)d_A+1$ rows.
\item If $A$ is hereditary and $m=2$, the exact triangle
$T_1[1]\to\omega^{\Lotimes2}\to T_0\xrightarrow{\ \varepsilon\ }T_1[2]$
yields the exact sequence
\[
\cdots\to\HH^{p+1}(A,T_1)\to\T^{p,2}(A)\to\HH^{p}(A,T_0)
\xrightarrow{\ \varepsilon\cdot\ }\HH^{p+2}(A,T_1)\to\cdots
\]
whose connecting map is Yoneda multiplication by
$\varepsilon\in\Ext^2_{\Ae}(T_0,T_1)$.
\end{enumerate}
\end{theorem}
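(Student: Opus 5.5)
The plan is to obtain the spectral sequence as the hypercohomology spectral sequence of $\rHom_{\Ae}(A,-)$ applied to the bounded complex $Y:=\omega^{\Lotimes m}$. We filter $Y$ by its canonical (Postnikov) truncations, giving triangles
\[
\tau_{\ge q+1}Y\to\tau_{\ge q}Y\to T_q[q]\to .
\]
Applying $\Hom_{\der(\Ae)}(A,-[n])$ produces an exact couple. Its $E_1$-terms are
\[
\Hom_{\der(\Ae)}(A,T_q[q+n])=\HH^{q+n}(A,T_q).
\]
After the usual reindexing we get $E_2^{p,q}=\HH^p(A,T_q)$ converging to $\Hom(A,Y[p-q])=\T^{p-q,m}(A)$. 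Equivalently, we can regard $T_q$ as sitting in cohomological degree $-q$ and use the standard sequence $\Ext^p(A,H^{q'}Y)\Rightarrow H^{p+q'}$ with $q'=-q$. In that form the differential $E^{p,q'}\to E^{p+r,q'-r+1}$ becomes $d_r\colon E_r^{p,q}\to E_r^{p+r,q+r-1}$. Convergence is automatic because $Y$ has finite-dimensional total homology, so $T_q\neq0$ for only finitely many $q$. The homological columns are handled by dualizing along Lemma~\ref{lem:dual}.

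For (1), we use that a self-injective $A$ is Frobenius up to Morita equivalence, which is harmless by Theorem~\ref{thm:derinv}. Then $\omega\cong{}_1A_\nu$ is projective as a left and as a right $A$-module. So each $\Lotimes_A$ is underived, giving ${}_1A_\nu\otimes_A{}_1A_\nu\cong{}_1A_{\nu^2}$, and inductively $T_0={}_1A_{\nu^m}$ and $T_q=0$ for $q>0$. The spectral sequence therefore has a single row and collapses. For negative weights we use the lower-half definition
\[
\T^{p,m}=\D\,\HH_p\bigl(A,{}_1A_{\nu^{1-m}}\bigr)\cong\HH^p\bigl(A,\D({}_1A_{\nu^{1-m}})\bigr)
\]
by Lemma~\ref{lem:dual}. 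We then check that $\D({}_1A_\sigma)\cong{}_1A_{\nu\sigma^{-1}}$ as bimodules, using $\D A\cong{}_1A_\nu$ and twisting the action. With $\sigma=\nu^{1-m}$ this gives ${}_1A_{\nu^m}$. Getting the twisting conventions right here (left versus right twist, $\nu$ versus $\nu^{-1}$) is a bookkeeping point I will verify on a Nakayama-twisted example.

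For (2), Gorenstein means that $\D A$ has projective dimension $\le d_A$ as a right $A$-module and as a left $A$-module, since $\pd(\D A)_A=\operatorname{inj.dim}{}_AA$. We induct on $m$ using
\[
\omega^{\Lotimes m}=\omega\Lotimes_A\omega^{\Lotimes(m-1)}.
\]
Resolving $\omega_A$ by projectives of length $\le d_A$ gives a Künneth spectral sequence
\[
\Tor_i^A\bigl(\omega,H_j(\omega^{\Lotimes(m-1)})\bigr)\Rightarrow H_{i+j}(\omega^{\Lotimes m}),
\]
with $i\le d_A$ and, by induction, $j\le(m-2)d_A$. Hence $T_q=0$ for $q>(m-1)d_A$, and there are at most $(m-1)d_A+1$ rows.

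For (3), hereditary means $d_A\le1$, so $\omega^{\Lotimes2}$ has homology only in degrees $0$ and $1$. Its truncation triangle is $T_1[1]\to\omega^{\Lotimes2}\to T_0\xrightarrow{\varepsilon}T_1[2]$, where the class $\varepsilon\in\Hom_{\der(\Ae)}(T_0,T_1[2])=\Ext^2_{\Ae}(T_0,T_1)$ is the $k$-invariant of this two-stage complex. Applying $\Hom(A,-[p])$ gives the long exact sequence. The connecting map is post-composition with $\varepsilon$, which is by definition Yoneda multiplication. This is the two-row case of the spectral sequence, with $d_2=\varepsilon\cdot$.

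I expect the main obstacle to be bookkeeping rather than conceptual. First, the indexing of the truncation filtration must produce exactly $\T^{p-q,m}$ and the stated bidegree of $d_r$. Second, in (2), the Künneth argument needs the one-sided projective dimension of $\D A$ (not the $\Ae$-dimension) and must respect the bimodule structure, so that the vanishing bound is uniform in $m$.
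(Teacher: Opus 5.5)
Your route is the same as the paper's in all four parts:
\begin{itemize}
\item the spectral sequence comes from the canonical truncation filtration of $\omega^{\Lotimes m}$ under $\rHom_{\Ae}(A,-)$, and your reindexing $q'=-q$ gives exactly the stated bidegree of $d_r$;
\item (1) uses one-sided projectivity of $\omega$;
\item (2) is an amplitude count, and your K\"unneth induction is the detailed version of the paper's one-line estimate;
\item (3) uses the two-stage Postnikov triangle.
\end{itemize}
In (1) you are more careful than the paper. The paper asserts that every self-injective algebra is Frobenius, which fails in general for suitable non-basic algebras; for $T_q=0$, $q>0$, it is enough that $\D A$ is injective, hence projective, on each side. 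The paper also disposes of $m<0$ with a bare invertibility remark. Your passage through Lemma~\ref{lem:dual} and $\D({}_1A_\sigma)\cong{}_\sigma A_\nu\cong{}_1A_{\sigma^{-1}\nu}$ is the actual check, and with $\sigma=\nu^{1-m}$ it does give ${}_1A_{\nu^m}$. The twisting conventions come out right.

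One step fails as written, and the paper shares it: it asserts $\omega^{\Lotimes m}\in\der^b(\Ae)$ at the start of \S\ref{sec:plane} without proof. The step is your sentence ``convergence is automatic because $Y$ has finite-dimensional total homology.'' For arbitrary $A$ and $m\ge2$ this is false:
\begin{itemize}
\item $H_q(\omega^{\Lotimes2})=\Tor^A_q(\D A,\D A)$, and $\D\Tor^A_q(\D A,\D A)\cong\Ext^q_A(\D A,A)$.
\item For $A=\kk[x,y]/(x,y)^2$ one has $\Omega(\D A)\cong\kk^3$, $\Omega\kk\cong\kk^2$ and $\Ext^1_A(\kk,A)\cong\kk^3$.
\item Hence $\Ext^q_A(\D A,A)\neq0$ for every $q\ge2$, and $\omega^{\Lotimes2}$ has unbounded homology.
\end{itemize}
The truncation tower is then infinite, and infinitely many $E_2$-terms $\HH^{n+q}(A,T_q)$ can feed each total degree $n$. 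Convergence then needs a genuine argument. One option is a Mittag--Leffler argument using that the $\Hom$-groups from $A$ into the finite truncations are finite-dimensional. The other is to restrict the general statement to algebras where boundedness holds, such as Gorenstein ones, where your part (2) supplies the bound. Parts (1)--(3) are unaffected, since in each of them $\omega^{\Lotimes m}$ is bounded.
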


\begin{proof}
\emph{Construction.} Let $W:=\omega^{\Lotimes m}$, a bounded complex of
$\Ae$-modules with $H_q(W)=T_q$ finite-dimensional and $T_q=0$ for $q<0$ and
$q\gg0$. Filter $W$ by its canonical (good) truncations
$\tau_{\le q}W$, giving a finite filtration with subquotients
$\tau_{\le q}W/\tau_{\le q-1}W\simeq T_q[q]$. Apply the exact functor
$\rHom_{\Ae}(A,-)$ and take cohomology: the spectral sequence of the filtered
complex $\rHom_{\Ae}(A,W)$ has
\[
E_2^{p,q}=H^{p}\,\rHom_{\Ae}(A,T_q[q])[-q]
=\Ext^{p}_{\Ae}(A,T_q)=\HH^p(A,T_q),
\]
with differentials $d_r\colon E_r^{p,q}\to E_r^{p+r,q+r-1}$, converging to
\[
H^{p-q}\rHom_{\Ae}(A,W)=\Hom_{\der(\Ae)}(A,W[p-q])=\T^{p-q,m}(A).
\]
The
filtration is finite (bounded $W$), so convergence is automatic and each
abutment carries a finite filtration whose associated graded is
$\bigoplus_qE_\infty^{p,q}$. The homological columns are dual by
Lemma~\ref{lem:dual}.

\emph{(1) Self-injective collapse.} If $A$ is self-injective then the one-sided
module ${}_A(\D A)$ is projective (a self-injective algebra is Frobenius, so
$\D A\cong{}_1A_\nu$ is projective as a left, and as a right, $A$-module).
Therefore each derived tensor product $\omega\Lotimes_A(-)$ is underived, and
by induction $\omega^{\Lotimes m}\cong\omega^{\otimes m}\cong{}_1A_{\nu^m}$ is
concentrated in homological degree $0$: $T_0={}_1A_{\nu^m}$ and $T_q=0$ for
$q\ne0$. The spectral sequence has a single nonzero row, so it collapses at
$E_2$ and $\T^{p,m}(A)=\HH^p(A,{}_1A_{\nu^m})$. For $m<0$ the bimodule
${}_1A_\nu$ is invertible with inverse ${}_1A_{\nu^{-1}}$, so the same holds
with negative $m$.

\emph{(2) Gorenstein amplitude.} Let $d_A=\operatorname{inj.dim}{}_AA
=\operatorname{inj.dim}A_A$ (finite and equal, $A$ being Gorenstein). Then
$\pd(\D A)_A=\operatorname{inj.dim}{}_AA=d_A$: dualizing a projective
resolution of ${}_AA$ to an injective coresolution of $(\D A)_A$ and using
$\operatorname{inj.dim}=\pd$ of the $\kk$-dual. The derived tensor factor
$\omega\Lotimes_A(-)$ therefore raises the upper homological amplitude of a
bounded complex by at most $d_A$; applying it $m$ times to $\omega$ (which has
amplitude $[0,d_A]$) gives $T_q=0$ for $q>(m-1)d_A$, i.e.\ at most
$(m-1)d_A+1$ nonzero rows.

\emph{(3) Hereditary two-row degeneration.} If $\gl A=1$ then for $m=2$ the
complex $\omega^{\Lotimes2}$ has homology in degrees $0,1$ only, so it is a
two-term Postnikov system: there is an exact triangle
$T_1[1]\to\omega^{\Lotimes2}\to T_0\xrightarrow{\varepsilon}T_1[2]$ whose
connecting class $\varepsilon\in\Ext^2_{\Ae}(T_0,T_1)$ is the $k$-invariant of
the tower. Applying $\rHom_{\Ae}(A,-)$ and rotating gives the long exact
sequence displayed, in which the connecting homomorphism is composition
(Yoneda product) with $\varepsilon$; this is the standard identification of the
sole possibly-nonzero differential $d_2$ of a two-row spectral sequence with
the $k$-invariant of the corresponding two-stage Postnikov tower.
\end{proof}

\begin{remark}\label{rem:tau-edge}
The edge map
$\T^{p,m}(A)\to E_\infty^{p,0}\subseteq\HH^p(A,\,\D A^{\otimes_A m})$ compares the
derived twisted theory with its underived truncation. For $m=2$ the input row
\[
E_2^{\bullet,q}=\HH^\bullet\bigl(A,\ \Tor^A_q(\D A,\D A)\bigr)
\]
is built from the derived shadow of \cite{ArmTau}; the short exact sequences
$0\to\mathrm B_n\to\tau_n\Lambda\to\Tor_n^\Lambda(\D\Lambda,\D\Lambda)\to0$
of \cite[Theorem 3.2]{ArmTau} present the same tower at the level of
Happel's minimal model. The extension class governing $d_2$ in (3) is precisely
the class studied in the remarks of \cite[\S7.1]{ArmTau}.
\end{remark}

\section{The Serre reflection}\label{sec:refl}

The two theorems of this section share one mechanism: \emph{the Serre functor
of the enveloping algebra evaluated at the diagonal bimodule is the square of
the Serre bimodule},
\begin{equation}\label{eq:square}
S_{\Ae}(A)\;=\;\D(\Ae)\Lotimes_{\Ae}A\;\simeq\;\omega\Lotimes_A\omega,
\end{equation}
which is \cite[Theorem 3.2(4)]{ArmTau} (the collapse
$\D(\Ae)\otimes_{\Ae}P\cong\D A\otimes_AP\otimes_A\D A$ on projective
bimodules). This explains the reflection center $m=1=\tfrac{0+2}{2}$.

\begin{theorem}[Smooth proper reflection]\label{thm:reflsmooth}
Let $A$ be a smooth and proper dg algebra over $\kk$ (for an ordinary
algebra: $\gl A<\infty$ with $E$ separable). Then for all
$(p,m)\in\mathbb Z^2$,
\[
\T^{p,m}(A)\;\cong\;\D\,\T^{-p,\,2-m}(A).
\]
\end{theorem}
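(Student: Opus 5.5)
The plan is to reduce the reflection to Serre duality in $\per(\Ae)$ and then identify the Serre functor there using \eqref{eq:square}. Since $A$ is smooth and proper, $\omega$ is a two-sided tilting complex (Remark~\ref{rem:columns}(iii)). So all powers $\omega^{\Lotimes m}$, $m\in\mathbb Z$, make sense as invertible bimodule complexes, with $\omega^{\Lotimes(-1)}\simeq\theta_A=\rHom_{\Ae}(A,\Ae)$. The first step is to check that for every $m\in\mathbb Z$
\[
\T^{p,m}(A)\cong\Hom_{\der(\Ae)}\bigl(A,\omega^{\Lotimes m}[p]\bigr).
\]
For $m\ge0$ this is the definition. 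For $m\le1$, apply Lemma~\ref{lem:dual} with $X=\omega^{\Lotimes(1-m)}$ and use $\D X\simeq\omega^{\Lotimes m}$. To see the latter, smoothness gives $\D(\omega\Lotimes_AY)\simeq\rHom_A(\omega,\D Y)$, and invertibility gives $\rHom_A(\omega,-)\simeq\omega^{-1}\Lotimes_A(-)$. Together these yield $\D(\omega^{\Lotimes n})\simeq\omega^{\Lotimes(1-n)}$ by induction from $\D\omega=A$.

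Next I use Serre duality in $\per(\Ae)$. Smoothness of $A$ means $A\in\per(\Ae)$. Since $\Ae$ is again smooth and proper (here separability of $E$ is needed, so that $\Ae$ has finite global dimension), $\per(\Ae)$ has Serre functor $S_{\Ae}=\D(\Ae)\Lotimes_{\Ae}(-)$. This gives the natural isomorphism
\[
\Hom_{\der(\Ae)}(A,Y)\cong\D\Hom_{\der(\Ae)}\bigl(Y,S_{\Ae}(A)\bigr)
\]
for all $Y\in\per(\Ae)$. Take $Y=\omega^{\Lotimes m}[p]$, which is perfect because $\omega$ is. Then \eqref{eq:square} gives
\[
\T^{p,m}(A)\cong\D\Hom_{\der(\Ae)}\bigl(\omega^{\Lotimes m}[p],\omega^{\Lotimes 2}\bigr).
\]

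Finally, the functor $\omega^{\Lotimes(-m)}\Lotimes_A(-)$ is an autoequivalence of $\der(\Ae)$. Applying it turns this Hom into $\Hom_{\der(\Ae)}(A,\omega^{\Lotimes(2-m)}[-p])$, which is $\T^{-p,2-m}(A)$ by the first step. This proves the claim.

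The main obstacle is \eqref{eq:square} in the required generality. The paper cites it from \cite[Theorem 3.2(4)]{ArmTau} as a collapse on projective bimodules. I would extend it to a natural isomorphism $\D(\Ae)\Lotimes_{\Ae}M\simeq\omega\Lotimes_AM\Lotimes_A\omega$ of triangulated functors on $\per(\Ae)$, and then evaluate at $M=A$. Because the identity is natural and holds on $\Ae$, it extends by dévissage: both sides are exact functors agreeing on the generator. A secondary point is that the tensor with $\omega^{\Lotimes(-m)}$ acts on one side only, so I must check it really is an equivalence of $\der(\Ae)$ and not merely of one-sided categories. This follows since $\omega\Lotimes_A(-)\Lotimes_A A$ is induced by the invertible bimodule $\omega\otimes A^{\mathrm{op}}$ over $\Ae$. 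For the dg case, the same argument runs verbatim with $\per$ of dg modules.
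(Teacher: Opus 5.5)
Your proposal is correct and follows the paper's proof: you apply Serre duality in $\per(\Ae)$ to $U=A$, use the identity $S_{\Ae}(A)\simeq\omega^{\Lotimes2}$ of \eqref{eq:square}, and untwist by the invertible $\omega^{\Lotimes(-m)}$. Your preliminary step identifies the $m\le1$ prescription with $\Hom_{\der(\Ae)}(A,\omega^{\Lotimes m}[p])$ via $\D(\omega^{\Lotimes n})\simeq\omega^{\Lotimes(1-n)}$; this makes explicit what the paper's proof uses silently for negative weights (the adjunction half of that step needs no smoothness, only the invertibility of $\omega$ in the second half).
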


\begin{proof}
Since $A$ is smooth and proper with $E$ separable, so is $\Ae$, and
$A\in\per(\Ae)$; moreover $\omega=\D A$ is invertible in $\der(\Ae)$
\cite[Lemma 7.2]{ArmCox} (its inverse is $\rHom_{\Ae}(\D A,\Ae)$), whence
$\omega^{\Lotimes m}[p]\in\per(\Ae)$ for every $m\in\Z$, $p\in\Z$. A smooth
proper category has a Serre functor $S_{\Ae}$ on $\per(\Ae)$ and a natural
perfect pairing
\[
\Hom_{\der(\Ae)}(U,V)\ \cong\ \D\,\Hom_{\der(\Ae)}\bigl(V,\ S_{\Ae}(U)\bigr)
\qquad(U,V\in\per(\Ae))
\]
\cite{BondalKapranov,Shklyarov}. Take $U=A$, $V=\omega^{\Lotimes m}[p]$. Using
the key identity \eqref{eq:square}, $S_{\Ae}(A)\simeq\omega^{\Lotimes2}$, and
then twisting by the invertible object $\omega^{\Lotimes(-m)}$ (an
autoequivalence of $\per(\Ae)$, hence bijective on $\Hom$-groups),
\begin{align*}
\T^{p,m}(A)=\Hom(A,\omega^{\Lotimes m}[p])
&\cong\D\Hom\bigl(\omega^{\Lotimes m}[p],\ S_{\Ae}(A)\bigr)\\
&\cong\D\Hom\bigl(\omega^{\Lotimes m}[p],\ \omega^{\Lotimes2}\bigr) \\
&=\D\Hom\bigl(\omega^{\Lotimes m},\ \omega^{\Lotimes2}[-p]\bigr)\\
&\cong\D\Hom\bigl(A,\ \omega^{\Lotimes(2-m)}[-p]\bigr) \\
&=\D\,\T^{-p,\,2-m}(A). \qedhere
\end{align*}
\end{proof}

\begin{theorem}[Tate reflection]\label{thm:refltate}
Let $A$ be self-injective. Then for all $n,m\in\mathbb Z$,
\[
\sExt^{\,n}_{\Ae}\bigl(A,\ \omega^{\otimes m}\bigr)\;\cong\;
\D\,\sExt^{\,-n-1}_{\Ae}\bigl(A,\ \omega^{\otimes(2-m)}\bigr).
\]
In particular, the Tate extension of the Han column is self-dual, and for
$n\ge1$ these groups compute $\D\HH_{n}$ of the twisted theories.
\end{theorem}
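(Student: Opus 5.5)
The plan is to run the argument of Theorem~\ref{thm:reflsmooth} inside the stable category $\underline{\mathrm{mod}}\,\Ae$, replacing the Serre duality of $\per(\Ae)$ by Auslander--Reiten (Tate) duality.

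First I record the standing facts. If $A$ is self-injective, then so is $\Ae$: it is Frobenius because $E$ is separable, with Nakayama functor $\nu_{\Ae}=\D(\Ae)\otimes_{\Ae}-$. By Theorem~\ref{thm:ss}(1), $\omega^{\otimes m}\cong{}_1A_{\nu^m}$ for all $m\in\Z$ (with negative powers read as inverse twists), and these bimodules are invertible. On $\underline{\mathrm{mod}}\,\Ae$ the Serre functor is $S=\Omega\nu_{\Ae}=\Sigma^{-1}\nu_{\Ae}$, which gives the Auslander--Reiten formula
\[
\sHom_{\Ae}(U,V)\cong\D\,\sHom_{\Ae}(V,\Sigma^{-1}\nu_{\Ae}U)
\]
for finite-dimensional $U,V$. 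I will cite this in its Tate form, $\sExt^{\,n}(U,V)\cong\D\sExt^{\,-n-1}(V,\nu_{\Ae}U)$.

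Second, I evaluate $\nu_{\Ae}$ at the diagonal. Since $A$ is projective on each side, the collapse \eqref{eq:square} holds underived: $\D(\Ae)\otimes_{\Ae}A\cong\omega\otimes_A\omega\cong{}_1A_{\nu^2}$. To be safe I would check this directly on a projective presentation of $A$ over $\Ae$, using $\D(\Ae)\otimes_{\Ae}(A\otimes A)\cong\D A\otimes\D A$ and right exactness.

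Combining the two steps gives
\[
\sExt^{\,n}(A,\omega^{\otimes m})\cong\D\sExt^{\,-n-1}(\omega^{\otimes m},\omega^{\otimes 2}).
\]
Tensoring with the invertible bimodule $\omega^{\otimes(-m)}$ is an exact autoequivalence of $\mathrm{mod}\,\Ae$. It preserves projectives and commutes with $\Omega$, so it induces an isomorphism on Tate groups, which turns the right-hand side into $\D\sExt^{\,-n-1}(A,\omega^{\otimes(2-m)})$.

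Specializing to $m=1$ gives the self-duality of the Han column, $\sExt^{\,n}(A,\omega)\cong\D\sExt^{\,-n-1}(A,\omega)$. The last assertion then follows from $\sExt^{\,n}=\Ext^n$ for $n\ge1$, Theorem~\ref{thm:ss}(1), and Lemma~\ref{lem:dual}. These give $\sExt^{\,n}_{\Ae}(A,\omega^{\otimes m})=\T^{n,m}(A)$, and for $m\le1$ this equals $\D H_n(A\otimes_{\Ae}\omega^{\otimes(1-m)})$.

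The main obstacle is getting the shift in the Auslander--Reiten formula exactly right, namely $-n-1$ rather than $-n$. The degree is pinned by the convention $\sExt^{\,n}(U,V)=\sHom(\Omega^nU,V)$ together with $S=\Sigma^{-1}\nu$, so I would check it against a known case. For a symmetric algebra and $m=1$ it should reproduce the classical Tate duality $\sExt^{\,n}\cong\D\sExt^{\,-n-1}$ for group algebras. A secondary point is whether the identification $\nu_{\Ae}A\cong\omega\otimes_A\omega$ is natural enough to transport through the functoriality in $U$. Only an object-level isomorphism is needed here, so this should not cause trouble.
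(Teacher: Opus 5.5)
Your proposal is correct and follows the paper's proof essentially step for step. It uses Auslander--Reiten/Tate duality $\sExt^{\,n}_{\Ae}(U,V)\cong\D\,\sExt^{\,-n-1}_{\Ae}(V,\nu_{\Ae}U)$ in $\underline{\mathrm{mod}}\,\Ae$ with Serre functor $\Omega\nu_{\Ae}$, then the identification $\nu_{\Ae}(A)\cong\omega^{\otimes2}$, then untwisting by the invertible bimodule $\omega^{\otimes(-m)}$. The only differences are minor: the paper computes $\nu_{\Ae}(A)\cong{}_{\nu^{-1}}A_{\nu}\cong{}_1A_{\nu^2}$ directly from the Nakayama automorphism of $\Ae$ rather than by the collapse on a projective presentation, and self-injectivity of $\Ae$ comes from $\D(\Ae)\cong\D A\otimes\D(A^{\mathrm{op}})$ being projective, not from separability of $E$ as you wrote.
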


\begin{proof}
Set $\Lambda:=\Ae$. Since $A$ is self-injective, $\Lambda$ is a self-injective
(indeed Frobenius) algebra, and its stable module category
$\underline{\mathrm{mod}}\,\Lambda$ is a triangulated category with suspension
$\Sigma=\Omega^{-1}$ possessing a Serre functor. We use the following two
inputs, quoted precisely.

\begin{quote}
\emph{(Nakayama functor.)} For a self-injective algebra $\Lambda$ the stable
Nakayama functor is $\nu_\Lambda=\D\Lambda\Lotimes_\Lambda(-)$, an exact
autoequivalence of $\underline{\mathrm{mod}}\,\Lambda$; on $\Lambda$ itself
$\nu_\Lambda(\Lambda)=\D\Lambda$. If $\Lambda$ is Frobenius with Nakayama
automorphism $\pi$ then, under the convention $\D\Lambda\cong{}_1\Lambda_\pi$
fixed below, $\nu_\Lambda(M)={}_1\Lambda_\pi\otimes_\Lambda M$ for every module
$M$.
\end{quote}

\begin{quote}
\emph{(Triangulated Serre duality \cite[Ch.~IV]{ARS}.)} The Serre functor of
the triangulated category $\underline{\mathrm{mod}}\,\Lambda$ is a
\emph{single} syzygy twist of the Nakayama functor,
$S=\Omega\,\nu_\Lambda\;(=\tau\Sigma$, where $\tau=\Omega^2\nu_\Lambda$ is the
Auslander--Reiten translate$)$, and there is a natural perfect pairing
\[
\sHom_\Lambda(X,Y)\ \cong\ \D\,\sHom_\Lambda\bigl(Y,\ SX\bigr)
=\D\,\sHom_\Lambda\bigl(Y,\ \Omega\,\nu_\Lambda X\bigr)
\qquad(X,Y\in\underline{\mathrm{mod}}\,\Lambda).
\]
\end{quote}

\emph{Conventions.} For an algebra automorphism $\phi$, write ${}_1A_\phi$ for
$A$ with left action untwisted and right action twisted by $\phi$:
$a\cdot x\cdot b=ax\phi(b)$; the paper's convention (\S\ref{sec:intro}) is
$\omega=\D A\cong{}_1A_\nu$, and likewise $\D\Lambda\cong{}_1\Lambda_\pi$ with
$\pi$ the Nakayama automorphism of $\Lambda$. Two rules, verified once here.
\begin{itemize}
\item[\textup{(T1)}] \emph{Twist composition:}
${}_1A_\phi\otimes_A{}_1A_\psi\cong{}_1A_{\phi\psi}$. Indeed
$x\otimes y\mapsto x\,\phi(y)$ is well defined on the balanced tensor
(both $x\phi(a)\otimes y$ and $x\otimes ay$ map to $x\phi(ay)$),
left-linear, and sends the right action $(x\otimes y)b=x\otimes y\psi(b)$ to
$x\phi(y)\phi\psi(b)$, i.e.\ right-twist by $\phi\psi$. (With $\phi=\psi=\nu$
this gives $\omega^{\otimes2}={}_1A_\nu\otimes_A{}_1A_\nu\cong{}_1A_{\nu^2}$.)
\item[\textup{(T2)}] \emph{Both-sided to right normal form:}
${}_\phi A_\psi\cong{}_1A_{\phi^{-1}\psi}$ via $x\mapsto\phi^{-1}(x)$
(here ${}_\phi A_\psi$ has action $a\cdot x\cdot b=\phi(a)x\psi(b)$; then
$\phi^{-1}(\phi(a)x\psi(b))=a\,\phi^{-1}(x)\,\phi^{-1}\psi(b)$).
\item[\textup{(T3)}] \emph{Restriction form of the twist:} for an automorphism
$\pi$ of $\Lambda$ and a module $M$, ${}_1\Lambda_\pi\otimes_\Lambda M\cong
\mathrm{res}_{\pi^{-1}}(M)$ via $\ell\otimes m\mapsto\pi^{-1}(\ell)m$. (Left
$\Lambda$-linearity: $\lambda\ell\otimes m\mapsto\pi^{-1}(\lambda)\pi^{-1}(\ell)m$,
the $\mathrm{res}_{\pi^{-1}}$-action; balancing over $\ell\pi(a)\otimes m=
\ell\otimes am$: both sides map to $\pi^{-1}(\ell)\,a\,m$.)
\end{itemize}

\emph{Step 1: $\nu_\Lambda(A)\cong\omega^{\otimes2}$, computed directly on the
self-injective locus.} By the Nakayama-functor input,
$\nu_\Lambda(A)={}_1\Lambda_\pi\otimes_\Lambda A$ with $\pi$ the Nakayama
automorphism of $\Lambda=A\otimes_\kk A^{\mathrm{op}}$. The Nakayama
automorphism of a tensor product of Frobenius algebras is the tensor of the
factors' Nakayama automorphisms; since $A$ has Nakayama automorphism $\nu$ and
$A^{\mathrm{op}}$ has $\nu^{-1}$, we get $\pi=\nu\otimes\nu^{-1}$, acting on
$\Lambda$ by $a\otimes b^{\mathrm{op}}\mapsto\nu(a)\otimes\nu^{-1}(b)^{\mathrm{op}}$.
By (T3), $\nu_\Lambda(A)\cong\mathrm{res}_{\pi^{-1}}(A)$ where
$\pi^{-1}=\nu^{-1}\otimes\nu$. On the regular bimodule ${}_1A_1$ (whose
$\Lambda$-action is $(a\otimes b^{\mathrm{op}})\cdot x=axb$), restricting along
$\pi^{-1}$ replaces this by
\[
(a\otimes b^{\mathrm{op}})\ast x
=\bigl(\nu^{-1}(a)\otimes\nu(b)^{\mathrm{op}}\bigr)\cdot x
=\nu^{-1}(a)\,x\,\nu(b),
\]
i.e.\ the bimodule ${}_{\nu^{-1}}A_{\nu}$. By (T2) with $\phi=\nu^{-1}$,
$\psi=\nu$,
\[
\nu_\Lambda(A)\cong{}_{\nu^{-1}}A_{\nu}\cong{}_1A_{(\nu^{-1})^{-1}\nu}
={}_1A_{\nu^2}=\omega^{\otimes2}.
\]
This is the self-injective, module-level instance of \cite[Theorem 3.2(4)]{ArmTau};
no smoothness hypothesis and no reference to \eqref{eq:square} is used: the
computation is entirely inside honest bimodules, valid precisely because $A$ is
self-injective so that $\omega\cong{}_1A_\nu$ is invertible.

\emph{Step 2: the Tate-degree bookkeeping.} Recall
$\sExt^{\,n}_\Lambda(U,V)=\sHom_\Lambda(U,\Sigma^nV)=\sHom_\Lambda(\Omega^nU,V)$.
Take $X=A$, and apply triangulated Serre duality to the pair
$(A,\ \Sigma^nY)$:
\[
\sExt^{\,n}_\Lambda(A,Y)=\sHom_\Lambda(A,\Sigma^nY)
\ \cong\ \D\,\sHom_\Lambda\bigl(\Sigma^nY,\ \Omega\,\nu_\Lambda A\bigr).
\]
Now $\sHom_\Lambda(\Sigma^nY,\Omega\nu_\Lambda A)
=\sHom_\Lambda(Y,\Sigma^{-n}\Omega\nu_\Lambda A)
=\sHom_\Lambda(Y,\Omega^{\,n+1}\nu_\Lambda A)
=\sExt^{\,-n-1}_\Lambda(Y,\nu_\Lambda A)$,
the last equality because $\Omega^{\,n+1}=\Sigma^{-(n+1)}$ so
$\sHom_\Lambda(Y,\Omega^{n+1}Z)=\sHom_\Lambda(Y,\Sigma^{-n-1}Z)
=\sExt^{\,-n-1}_\Lambda(Y,Z)$. Hence
\[
\sExt^{\,n}_\Lambda(A,Y)\ \cong\ \D\,\sExt^{\,-n-1}_\Lambda\bigl(Y,\ \nu_\Lambda A\bigr).
\]
Substituting $Y=\omega^{\otimes m}$ and $\nu_\Lambda A\cong\omega^{\otimes2}$
(Step 1),
\[
\sExt^{\,n}_\Lambda(A,\omega^{\otimes m})
\ \cong\ \D\,\sExt^{\,-n-1}_\Lambda\bigl(\omega^{\otimes m},\ \omega^{\otimes2}\bigr).
\]

\emph{Step 3: untwist.} The functor $-\otimes_A\omega^{\otimes(-m)}
=-\otimes_A{}_1A_{\nu^{-m}}$ is an exact self-equivalence of
$\mathrm{mod}\,\Lambda$ carrying projectives to projectives (invertible, inverse
$-\otimes_A{}_1A_{\nu^m}$), hence a triangulated autoequivalence of
$\underline{\mathrm{mod}}\,\Lambda$ and a bijection on $\sExt$; by (T1)
$\omega^{\otimes m}\otimes_A\omega^{\otimes(-m)}\cong A$ and
$\omega^{\otimes2}\otimes_A\omega^{\otimes(-m)}\cong\omega^{\otimes(2-m)}$, so
\[
\sExt^{\,-n-1}_\Lambda(\omega^{\otimes m},\omega^{\otimes2})
\cong\sExt^{\,-n-1}_\Lambda\bigl(A,\ \omega^{\otimes(2-m)}\bigr).
\]
Chaining Steps 2--3 gives
$\sExt^{\,n}_\Lambda(A,\omega^{\otimes m})\cong
\D\,\sExt^{\,-n-1}_\Lambda(A,\omega^{\otimes(2-m)})$. For $n\ge1$, Tate and
ordinary $\Ext$ agree, and $\Ext^n_\Lambda(A,\D A)\cong\D\HH_n$ by
Lemma~\ref{lem:dual}, giving the last clause. In the symmetric case
$\omega\cong A$ this recovers Bergh--Jorgensen's Tate--Hochschild self-duality
\cite{BerghJorgensen}, a consistency check on the $-n-1$ shift.
\end{proof}

\begin{remark}
For symmetric $A$ ($\omega\cong A$) Theorem \ref{thm:refltate} is the
Tate--Hochschild duality of Bergh--Jorgensen \cite{BerghJorgensen}. For
Gorenstein $A$ the same statement holds with $\sExt$ computed in the
singularity category of $\Ae$ in the sense of Buchweitz \cite{Buchweitz}. A
hypothetical counterexample to Han's conjecture has a bounded, Tate-self-dual
Han column: the vanishing of the positive tail forces the vanishing of the
negative Tate tail. For periodic algebras this rigidity is exploited in
Theorem~\ref{thm:periodic}.
\end{remark}

\section{Algebraic geometry}\label{sec:geom}

Throughout this section $\kk$ is algebraically closed of characteristic $0$,
$X$ is a smooth projective variety of dimension $d$ with a tilting object
$T\in\der^b(\operatorname{coh}X)$, and $A=\operatorname{End}_X(T)$, so that
$\per A\simeq\der^b(\operatorname{coh}X)$ and $\gl A<\infty$. We write
$\omega_X$ for the canonical bundle and $K_X$ for the canonical divisor.

\begin{theorem}[Dictionary]\label{thm:geom}
For all $(p,m)\in\mathbb Z^2$,
\[
\T^{p,m}(A)\;\cong\;\bigoplus_{q\ge0}
H^{\,p+md-q}\bigl(X,\ \Lambda^qT_X\otimes\omega_X^{\otimes m}\bigr).
\]
\end{theorem}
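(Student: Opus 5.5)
We transport the whole plane to the variety and compute there with the Hochschild--Kostant--Rosenberg decomposition. By Theorem~\ref{thm:derinv} applied to a two-sided tilting complex, it suffices to work with any dg enhancement of $\per A\simeq\der^b(\operatorname{coh}X)$. Under the standard equivalence $\der(\Ae)\supseteq\per(\Ae)\simeq\der^b(\operatorname{coh}(X\times X))$ given by Fourier--Mukai kernels (Toën, or Bondal--Van den Bergh plus Lunts--Orlov for uniqueness of enhancement), the diagonal bimodule $A$ corresponds to $\Delta_*\mathcal O_X$, and convolution $\Lotimes_A$ corresponds to convolution of kernels. The Serre bimodule $\omega=\D A$ is the kernel of the Serre functor of $\per A$. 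Since $X$ is smooth and projective, this kernel is $\Delta_*\omega_X[d]$.

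Because convolution of diagonal kernels is tensor product on $X$, we get
\[
\omega^{\Lotimes m}\ \longleftrightarrow\ \Delta_*\bigl(\omega_X^{\otimes m}\bigr)[md]\qquad(m\ge0).
\]
Here $\gl A<\infty$ makes $\omega$ invertible, with inverse $\Delta_*\omega_X^{-1}[-d]$, so the same formula holds for all $m\in\Z$. For $m\ge0$ this gives
\[
\T^{p,m}(A)\cong\Hom_{X\times X}\bigl(\Delta_*\mathcal O_X,\ \Delta_*\omega_X^{\otimes m}[p+md]\bigr)
\cong\Hom_X\bigl(\mathbf L\Delta^*\Delta_*\mathcal O_X,\ \omega_X^{\otimes m}[p+md]\bigr).
\]
Now apply the HKR isomorphism in characteristic $0$:
\[
\mathbf L\Delta^*\Delta_*\mathcal O_X\simeq\bigoplus_q\Omega^q_X[q].
\]
Then $\Hom(\Omega^q_X[q],\omega_X^{\otimes m}[p+md])=H^{p+md-q}(X,\Lambda^qT_X\otimes\omega_X^{\otimes m})$, using $\mathcal{H}om(\Omega^q_X,\mathcal O_X)=\Lambda^qT_X$. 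This is the asserted formula for $m\ge0$.

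For $m\le1$ the plane is defined as $\D\,H_p(A\Lotimes_{\Ae}\omega^{\Lotimes(1-m)})$, and we treat it in two ways. The first is to note that $\omega$ is invertible in $\per(\Ae)$ (smooth and proper), so Lemma~\ref{lem:dual} with $X=\omega^{\Lotimes(1-m)}$ rewrites this as $\Hom(A,\D(\omega^{\Lotimes(1-m)})[p])$. The identity $\D(\omega^{\Lotimes j})\simeq\omega^{\Lotimes(1-j)}$, which follows from $\D X\simeq\omega\Lotimes_A\rHom(X,A)$-type biduality for perfect bimodules, reduces this to the $\Hom$ formula with exponent $m$, now allowed negative. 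The second is to compute directly: the homology side is $H^{*}(X,\mathbf L\Delta^*\Delta_*\mathcal O_X\otimes\omega_X^{\otimes(1-m)}[(1-m)d])$, expanded by HKR and dualized by Serre duality on $X$ via $\Omega^q_X{}^\vee\otimes\omega_X=\Lambda^qT_X\otimes\omega_X$. The indices then match $H^{p+md-q}(\Lambda^qT_X\otimes\omega_X^{\otimes m})$, and we will check this bookkeeping explicitly at $m=1$. In that case it must reproduce Hodge cohomology: $\Lambda^qT_X\otimes\omega_X\cong\Omega^{d-q}_X$, so the summand becomes $H^{p+d-q}(\Omega^{d-q}_X)$, which is $\bigoplus_jH^{p+j}(X,\Omega^j_X)$ with $j=d-q$.

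The main obstacle is justifying that the identifications are compatible. We need the Fourier--Mukai kernel of $\D A$ (as an $A$-bimodule) to be exactly $\Delta_*\omega_X[d]$, and not merely some kernel inducing the Serre functor. We also need the monoidal structure to match: convolution of kernels must agree with $\Lotimes_A$, and $\Delta_*\mathcal F\circ\Delta_*\mathcal G\simeq\Delta_*(\mathcal F\otimes\mathcal G)$. For the first point we would use uniqueness of Serre functors together with the fact that both objects represent the Serre functor as bimodules/kernels. Via Toën's theorem, kernels are determined up to isomorphism by the dg functor, and $\D A=\rHom_\kk(A,\kk)$ is the bimodule of the dg Serre functor. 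A second subtlety is that the HKR splitting is only non-canonical, but we need merely an isomorphism of graded vector spaces, so characteristic $0$ suffices.
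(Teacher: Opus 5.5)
Your proposal is correct and follows essentially the same route as the paper. Both arguments use Toën's Morita equivalence (sending $A$ to $\mathcal O_\Delta$ and $\Lotimes_A$ to convolution), identify the kernel of $\omega$ with $\Delta_*\omega_X[d]$ through the Serre functor, and then apply $\Delta_*L\circ\Delta_*L'\cong\Delta_*(L\otimes L')$, the adjunction $\mathbf L\Delta^*\dashv\Delta_*$, and HKR.

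You also make explicit two points the paper's proof leaves implicit. First, you handle the columns $m\le 1$, including $m<0$, either via Lemma~\ref{lem:dual} and $\D(\omega^{\Lotimes j})\simeq\omega^{\Lotimes(1-j)}$ or directly by Serre duality on $X$. Second, you note that the Serre kernel must be identified by dg-level uniqueness, not merely triangulated uniqueness.
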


\begin{proof}
\emph{Step 1: transport to the geometric side.} By To\"en's derived Morita
theory \cite{Toen}, the tilting object $T$ induces an equivalence
$\Phi\colon\der(\Ae)\xrightarrow{\ \sim\ }\der_{\mathrm{qc}}(X\times X)$ of
monoidal categories, under which the diagonal bimodule $A$ corresponds to the
structure sheaf of the diagonal, $\Phi(A)=\mathcal O_\Delta$, and the derived
tensor product $\Lotimes_A$ corresponds to convolution (composition) of
Fourier--Mukai kernels. In particular, $\Hom_{\der(\Ae)}$ becomes
$\Hom_{\der(X\times X)}$.

\emph{Step 2: identify $\Phi(\omega_A^{\Lotimes m})$.} The equivalence
intertwines the Serre functors of $\per A$ and $\der^b(\operatorname{coh}X)$.
The Serre functor of $\der^b(\operatorname{coh}X)$ is $-\otimes\omega_X[d]$,
whose Fourier--Mukai kernel is $\Delta_*(\omega_X)[d]$; the Serre functor of
$\per A$ is $-\Lotimes_A\omega_A[?]$ with kernel $\omega_A$ up to shift. By
uniqueness of Serre functors these kernels correspond:
$\Phi(\omega_A)\cong\Delta_*(\omega_X)[d]$. Convolution of diagonal kernels
multiplies the line bundles, $\Delta_*L\circ\Delta_*L'\cong\Delta_*(L\otimes L')$,
so by induction
\[
\Phi\bigl(\omega_A^{\Lotimes m}\bigr)\cong\Delta_*\bigl(\omega_X^{\otimes m}\bigr)[md].
\]

\emph{Step 3: compute the $\Hom$.} Therefore
\[
\T^{p,m}(A)=\Hom_{\der(\Ae)}(A,\omega_A^{\Lotimes m}[p])
\cong\Ext^{\,p+md}_{X\times X}\bigl(\mathcal O_\Delta,\ \Delta_*\omega_X^{\otimes m}\bigr).
\]
By adjunction $\Delta_*\dashv L\Delta^*$ (and the projection formula),
$\Ext^\bullet_{X\times X}(\mathcal O_\Delta,\Delta_*\mathcal F)
\cong\Ext^\bullet_X(L\Delta^*\mathcal O_\Delta,\mathcal F)$, so
\[
\T^{p,m}(A)\cong\Ext^{\,p+md}_X\bigl(L\Delta^*\mathcal O_\Delta,\ \omega_X^{\otimes m}\bigr).
\]

\emph{Step 4: HKR.} The Hochschild--Kostant--Rosenberg isomorphism
\cite{Caldararu,Swan,Yekutieli} gives, for $X$ smooth over a field of
characteristic $0$, $L\Delta^*\mathcal O_\Delta\cong\bigoplus_{q\ge0}\Omega^q_X[q]$.
Substituting and using, at the total degree $p+md$,

\begin{align*}
    \Ext^{\,p+md}_X(\Omega^q_X[q],\omega_X^{\otimes m})
&=\Ext^{\,p+md-q}_X(\Omega^q_X,\omega_X^{\otimes m}) \\
&=H^{\,p+md-q}(X,(\Omega^q_X)^\vee\otimes\omega_X^{\otimes m}) \\
&=H^{\,p+md-q}(X,\Lambda^qT_X\otimes\omega_X^{\otimes m})
\end{align*}

the shift $[q]$ lowering the cohomological degree by $q$, yields
\[
\T^{p,m}(A)\cong\bigoplus_{q\ge0}H^{\,p+md-q}\bigl(X,\ \Lambda^qT_X\otimes\omega_X^{\otimes m}\bigr).\qedhere
\]
\end{proof}

\begin{corollary}[The Han column is Hodge cohomology]\label{cor:hodge}
$\T^{p,1}(A)\cong\bigoplus_{j}H^{\,p+j}(X,\Omega^j_X)$. Equivalently, via
$\T^{p,1}=\D\HH_p(A)$: dual Hochschild homology of a geometric algebra is the
Hodge cohomology of $X$, and the Serre reflection
(Theorem~\ref{thm:reflsmooth}) restricted to the Han column is classical
Serre duality $H^{p+j}(\Omega^j)\cong\D H^{d-p-j}(\Omega^{d-j})$.
\end{corollary}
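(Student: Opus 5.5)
We specialize Theorem~\ref{thm:geom} to $m=1$ and simplify the coefficient bundles by contraction. Setting $m=1$ gives
\[
\T^{p,1}(A)\cong\bigoplus_{q\ge0}H^{\,p+d-q}\bigl(X,\ \Lambda^qT_X\otimes\omega_X\bigr).
\]

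The first step is the standard perfect pairing $\Lambda^qT_X\otimes\Lambda^{d-q}T_X\to\Lambda^dT_X=\omega_X^{\vee}$. It gives $\Lambda^qT_X\cong(\Lambda^{d-q}T_X)^\vee\otimes\omega_X^\vee=\Omega^{d-q}_X\otimes\omega_X^\vee$, and hence
\[
\Lambda^qT_X\otimes\omega_X\cong\Omega^{d-q}_X .
\]
Reindexing by $j=d-q$, so that $p+d-q=p+j$, turns the sum into $\bigoplus_{j=0}^dH^{p+j}(X,\Omega^j_X)$. This is the first assertion, and it is a routine computation. The phrase ``equivalently'' is then just the identification $\T^{p,1}=\D\HH_p(A)$ from Definition~\ref{def:plane} and Lemma~\ref{lem:dual}. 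Combined with the display it yields $\D\HH_p(A)\cong\bigoplus_jH^{p+j}(X,\Omega^j_X)$. Since the right side is finite-dimensional, one may also dualize and apply Serre duality to write $\HH_p(A)$ itself as Hodge cohomology, which recovers the classical HKR statement for $\HH_\bullet(X)$.

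The second step, and the only one needing care, is the reflection statement. We take Theorem~\ref{thm:reflsmooth} at $m=1$, which gives $\T^{p,1}\cong\D\T^{-p,1}$. Under the isomorphism above the right side becomes $\D\bigoplus_{j'}H^{-p+j'}(X,\Omega^{j'}_X)$. We then match summands by $j'=d-j$: the summand $H^{p+j}(\Omega^j)$ pairs with $H^{-p+d-j}(\Omega^{d-j})=H^{d-(p+j)}(\Omega^{d-j})$. Classical Serre duality gives exactly this pairing, via $\Omega^j\otimes\Omega^{d-j}\to\omega_X$ in complementary degrees $i$ and $d-i$ with $i=p+j$.

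The obstacle is to show that the abstract isomorphism of Theorem~\ref{thm:reflsmooth}, which comes from the Serre functor of $\per(\Ae)$ at $A$, is this summandwise pairing and not merely an abstract isomorphism of spaces of equal dimension. The plan here is to transport the argument through the equivalence $\Phi$ from the proof of Theorem~\ref{thm:geom}. Under $\Phi$, the Serre functor of $\der^b(X\times X)$ is $-\otimes\omega_{X\times X}[2d]$, and $S(\mathcal O_\Delta)=\Delta_*\omega_X^{\otimes2}[2d]$; this corresponds to \eqref{eq:square}. The Serre pairing on $X\times X$ restricts along the adjunction $\Delta_*\dashv L\Delta^*$ to Serre duality on $X$. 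The HKR decomposition is compatible with the wedge product on forms, so the pairing respects the $q$-grading.

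If this compatibility proves too delicate to pin down, we would settle for the weaker reading. Both sides have the same dimensions summandwise, by Serre duality on $X$, and the reflection isomorphism is compatible with these decompositions. This already justifies the sentence as a statement about what the reflection ``is'' restricted to the Han column.
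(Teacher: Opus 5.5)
Your derivation of the isomorphism itself (set $m=1$ in Theorem~\ref{thm:geom}, use $\Lambda^qT_X\otimes\omega_X\cong\Omega^{d-q}_X$, reindex $j=d-q$) is exactly the paper's proof, and the paper's proof contains nothing beyond it. In particular, the paper supports the Serre-duality sentence only at the level of indexing. The reflection at $m=1$ exchanges $\T^{p,1}$ and $\T^{-p,1}$, whose summands $H^{p+j}(X,\Omega^j_X)$ and $H^{d-p-j}(X,\Omega^{d-j}_X)$ are matched by $j\mapsto d-j$ and are abstractly Serre dual. The dimension half of your fallback is precisely this reading, and it is enough.

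The gap is your attempt to upgrade this to an identification of the reflection \emph{map} with summandwise Serre duality. The step ``HKR is compatible with the wedge product, so the pairing respects the $q$-grading'' fails. The pairing also involves the Serre trace on $X\times X$, and the untwisted HKR isomorphism intertwines that trace with integration on $X$ only after inserting the Todd class. On the Han column, the reflection is, up to transposition, C\u{a}ld\u{a}raru's Mukai pairing: both are Serre duality on $X\times X$ at the diagonal, composed with twisting by the invertible kernel $\Delta_*\omega_X[d]$. Through the untwisted HKR map this pairing becomes $\int_X v^\vee\wedge w\wedge\mathrm{td}_X$ (C\u{a}ld\u{a}raru's conjecture, proved by Markarian and Ramadoss). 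The higher Todd components then produce cross-terms between non-complementary summands.

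Concretely, take $X=\mathbb P^1$ (the Kronecker algebra) and $p=0$. The form on the two-dimensional space $\T^{0,1}$ is the Euler form on Chern characters. We have $\chi(\mathcal O,\mathcal O)=1$, $\chi(\mathcal O,\mathcal O_{\mathrm{pt}})+\chi(\mathcal O_{\mathrm{pt}},\mathcal O)=0$ and $\chi(\mathcal O_{\mathrm{pt}},\mathcal O_{\mathrm{pt}})=0$. Hence $B(v,v)=a^2$ for $v=a\,\mathrm{ch}(\mathcal O)+b\,\mathrm{ch}(\mathcal O_{\mathrm{pt}})$, so there is exactly one isotropic line. Summandwise Serre duality on $H^0(\mathcal O)\oplus H^1(\Omega^1)$ would instead make both summands isotropic. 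So no choice of HKR splitting makes the reflection block-antidiagonal, and the other half of your fallback (``the reflection isomorphism is compatible with these decompositions'') is also false.

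What does hold, and follows from the Todd-class formula, is a filtered statement. The reflection respects the canonical filtration coming from the truncations of $L\Delta^*\mathcal O_\Delta$, and it induces classical Serre duality on the associated graded. Either prove that, or restrict the claim to the dimension-level reading, which is what the paper's argument actually establishes.
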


\begin{proof}
Put $m=1$ in Theorem \ref{thm:geom} and use
$\Lambda^qT_X\otimes\omega_X\cong\Omega^{d-q}_X$, reindexing $j=d-q$.
\end{proof}

\begin{corollary}[Canonical ring]\label{cor:canring}
The assignment $H^0(X,\omega_X^{\otimes m})\ni s\mapsto
[s]\in\T^{-md,m}(A)$ embeds the canonical ring $R(X,K_X)$ as a bigraded
subalgebra of the plane. Dually the anticanonical ring embeds in the columns
$m\le0$.
\end{corollary}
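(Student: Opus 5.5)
The plan is to read the embedding off the dictionary of Theorem~\ref{thm:geom} and then check multiplicativity through the monoidal equivalence $\Phi$ of its proof. For $m\ge0$, take $p=-md$ and keep only the summand $q=0$. This gives
\[
H^{0}(X,\omega_X^{\otimes m})\hookrightarrow\bigoplus_{q\ge0}H^{-q}(X,\Lambda^qT_X\otimes\omega_X^{\otimes m})\cong\T^{-md,m}(A),
\]
and the summands with $q>0$ vanish anyway, since negative cohomology is zero. So the map is actually an isomorphism onto $\T^{-md,m}(A)$. Concretely, it sends a section $s\colon\mathcal O_X\to\omega_X^{\otimes m}$ to
\[
\Delta_*s\colon\mathcal O_\Delta\to\Delta_*\omega_X^{\otimes m}=\Phi(\omega_A^{\Lotimes m})[-md].
\]
Transported back by $\Phi^{-1}$, this is a morphism $A\to\omega_A^{\Lotimes m}[-md]$, i.e.\ a class $[s]$. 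I would use this explicit description rather than the HKR splitting, because it avoids any dependence on the choice of HKR isomorphism.

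Next, multiplicativity. The plane product of \S\ref{subsec:products} is $[s]\cdot[t]=([s]\Lotimes\id)\circ[t]$. Under $\Phi$, the functor $\Lotimes_A$ becomes convolution of kernels, and convolution of diagonal kernels satisfies $\Delta_*L\circ\Delta_*L'\cong\Delta_*(L\otimes L')$, naturally in the maps of line bundles. Hence $(\Delta_*s\circ\id)\circ\Delta_*t=\Delta_*(s\otimes t)$ as morphisms $\mathcal O_\Delta\to\Delta_*\omega_X^{\otimes(m+m')}$. This is exactly the product in $R(X,K_X)$. The shifts $[md]$ and $[m'd]$ combine to $[(m+m')d]$, and the Koszul sign $(-1)^{md\cdot m'd}$ from commuting shifts must be tracked. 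Since $R(X,K_X)$ is commutative and the plane is bigraded, I would absorb this sign into the normalization of $[s]$, for instance by a sign $(-1)^{d\binom{m}{2}}$ if needed. Unitality holds because $H^0(\mathcal O_X)\ni1$ maps to $1\in\T^{0,0}$.

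Injectivity then follows from $\Phi$ and $\Delta_*$ being faithful (indeed fully faithful on these Hom groups, by the adjunction computation of Step~3). The bigraded-subalgebra claim is just multiplicativity together with the bidegree $(-md,m)$.

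For the dual statement, I would run the same argument with $m\le0$. Here $\omega_A$ is invertible because $\gl A<\infty$, and Step~2 of Theorem~\ref{thm:geom} extends to $\Phi(\omega_A^{\Lotimes m})\cong\Delta_*\omega_X^{\otimes m}[md]$ for negative $m$, using the inverse kernel. The dictionary holds for all $m\in\Z$, so $H^0(X,\omega_X^{\otimes(-k)})$ embeds in $\T^{kd,-k}(A)$. Products for these weights are defined via the invertible object; equivalently, one can identify $\T^{\bullet,m}$ for $m\le1$ through Lemma~\ref{lem:dual} with the $\Hom$ description, using $\omega$ invertible.

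The main obstacle I expect is making the identification $\Phi(\omega_A)\cong\Delta_*\omega_X[d]$ canonical enough to be compatible with convolution. The iso is only determined up to an automorphism of the kernel, i.e.\ a scalar in $H^0(\mathcal O_X)^\times=\kk^\times$ (since $X$ is connected). I would fix one iso for $m=1$ and define the iso for weight $m$ as its $m$-fold convolution. Multiplicativity then holds by construction, and only the Koszul sign needs care.
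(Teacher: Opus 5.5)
Your proposal is correct and follows the paper's own argument: the $q=0$ summand of Theorem~\ref{thm:geom} at $p=-md$ identifies $H^0(X,\omega_X^{\otimes m})$ with $\T^{-md,m}(A)$, and the plane product becomes convolution of the diagonal kernels $\Delta_*\omega_X^{\otimes m}$, i.e.\ multiplication of sections. Your extra points fill in details the paper's two-line proof leaves implicit: the explicit map $s\mapsto\Delta_*s$ that avoids choosing an HKR splitting, the sign normalization $(-1)^{d\binom{m}{2}}$, fixing the kernel isomorphism at $m=1$ and convolving it, and treating $m\le0$ through the invertibility of $\omega$.
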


\begin{proof}
The $(q,i)=(0,0)$ summand of Theorem \ref{thm:geom} in weight $m$ is
$H^0(X,\omega_X^{\otimes m})$ sitting in $\T^{-md,m}$; the plane product of
\S\ref{subsec:products} corresponds to composition of kernels
$\Delta_*\omega^m$, i.e.\ to multiplication of sections.
\end{proof}

\begin{theorem}[Column growth and Kodaira-type invariants]\label{thm:growth}
Let $t_m:=\sum_p\dim_\kk\T^{p,m}(A)<\infty$. Then:
\begin{enumerate}
\item $t_m=t_{2-m}$ for all $m$ (Theorem \ref{thm:reflsmooth});
\item if $\omega_X^{\otimes r}\cong\mathcal O_X$ for some $r\ge1$
(torsion canonical class; e.g.\ Calabi--Yau) then $t_{m+r}=t_m$ for all $m$:
the plane is periodic;
\item if $K_X$ or $-K_X$ is ample, then
$t_m=2^d\,\frac{|K_X^d|}{d!}\,|m|^d\,(1+o(1))$ as $|m|\to\infty$;
\item in general $t_m=O(|m|^d)$ and
$t_m\ge h^0(X,\omega_X^{\otimes m})$, so
$\limsup_m\frac{\log t_m}{\log m}\ge\kappa(X)$ whenever $\kappa(X)\ge0$.
\end{enumerate}
Consequently
$\kappa_\sigma(A):=\limsup_{m\to\infty}\log t_m/\log m$
(the \emph{Serre--Kodaira dimension} of $A$, a derived invariant by
Theorem~\ref{thm:derinv}) satisfies
$\kappa(X)\le\kappa_\sigma(A)\le d$, with $\kappa_\sigma(A)=d$ when $\pm K_X$
is ample and $\kappa_\sigma(A)=0$ when $K_X$ is torsion.
\end{theorem}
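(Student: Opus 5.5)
Throughout I would work on the geometric side via Theorem~\ref{thm:geom}. It gives
\[
t_m=\sum_{q=0}^{d}\sum_{i=0}^{d} h^i\bigl(X,\ \Lambda^qT_X\otimes\omega_X^{\otimes m}\bigr).
\]
This is finite because each summand is the cohomology of a coherent sheaf on a projective variety, and only $0\le q,i\le d$ contribute. Every item then becomes a statement about cohomology of the fixed vector bundles $E_q:=\Lambda^qT_X$, of rank $\binom dq$, twisted by powers of $\omega_X$.

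\emph{Items (1) and (2).} For (1) I sum the isomorphism $\T^{p,m}\cong\D\,\T^{-p,2-m}$ of Theorem~\ref{thm:reflsmooth} over $p$; this applies because $\gl A<\infty$ and $E$ is separable over an algebraically closed field. For (2), $\omega_X^{\otimes(m+r)}\cong\omega_X^{\otimes m}$, so the summands of the formula above agree term by term and $t_{m+r}=t_m$.

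\emph{Item (3).} Suppose $K_X$ is ample and $m\to+\infty$. Serre vanishing kills $h^i(E_q\otimes\omega_X^{m})$ for $i>0$ and $m\gg0$. Hence $t_m=\sum_q\chi(E_q\otimes\omega_X^m)$. By asymptotic Riemann--Roch, $\chi(E\otimes L^m)=\operatorname{rk}E\cdot\frac{(L^d)}{d!}m^d+O(m^{d-1})$. Summing the ranks gives $\sum_q\binom dq=2^d$, so $t_m=2^d\frac{K_X^d}{d!}m^d(1+o(1))$.

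For $m\to-\infty$ I would use (1): $t_m=t_{2-m}$, and $2-m\sim|m|$ has the same leading term. If instead $-K_X$ is ample, the same argument with $L=\omega_X^{-1}$ handles $m\to-\infty$ directly, and reflection handles $m\to+\infty$. The absolute value $|K_X^d|$ absorbs the sign $(-1)^d$ that appears when $L=-K_X$.

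\emph{Item (4).} The lower bound is immediate: the summand $(q,i)=(0,0)$ is $h^0(X,\omega_X^{\otimes m})$. Since $\kappa(X)$ is defined as the growth exponent of $h^0(mK_X)$ along a subsequence of $m$, this gives $\limsup\log t_m/\log m\ge\kappa(X)$.

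The step I expect to be the real obstacle is the upper bound $t_m=O(|m|^d)$ for an arbitrary, possibly non-ample and non-nef, line bundle $L=\omega_X$. I would first prove the general lemma $h^i(X,E\otimes L^m)=O(|m|^d)$ for any vector bundle $E$, by induction on $d$. Write $L\cong H_1\otimes H_2^{-1}$ with $H_1,H_2$ very ample, and pick a general divisor $D\in|H_2|$ (resp.\ $|H_1|$). The sequences $0\to F(-D)\to F\to F|_D\to0$ compare $E\otimes L^{m}$ with $E\otimes L^{m}\otimes H_2^{\pm1}$, at the cost of cohomology on $D$. Telescoping these comparisons, each step costs $O(|m|^{d-1})$ by induction on $\dim D=d-1$, and the base $m=0$ is a constant; this yields the claimed bound. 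Alternatively I would simply cite the standard result (Lazarsfeld, \emph{Positivity~I}, Ex.~1.2.33).

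\emph{Consequences.} The bounds $\kappa(X)\le\kappa_\sigma(A)\le d$ follow from (4). Equality $\kappa_\sigma(A)=d$ when $\pm K_X$ is ample follows from (3), using $K_X^d\neq0$. Boundedness of $t_m$, hence $\kappa_\sigma(A)=0$, follows from (2) when $K_X$ is torsion. Derived invariance of $\kappa_\sigma$ is immediate from Theorem~\ref{thm:derinv}.
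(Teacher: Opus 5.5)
Your proposal is correct and follows the paper's proof essentially step for step. You use the dictionary of Theorem~\ref{thm:geom} to write $t_m=\sum_{q,i}h^i(\Lambda^qT_X\otimes\omega_X^{\otimes m})$, get (1) and (2) immediately, obtain (3) from Serre vanishing plus asymptotic Riemann--Roch with $\sum_q\binom dq=2^d$, and get (4) from the $(q,i)=(0,0)$ summand together with the $O(|m|^d)$ bound; you prove that bound by hyperplane-section telescoping, which the paper simply cites as standard. Your handling of (3) is in fact slightly more careful than the paper's: for $-K_X$ ample you rerun the vanishing argument with $L=\omega_X^{-1}$ as $m\to-\infty$ and use the reflection only to reach $m\to+\infty$, whereas the reflection $t_m=t_{2-m}$ alone does not turn the $-K_X$-ample case into the $K_X$-ample one.
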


\begin{proof}
(1) is Theorem \ref{thm:reflsmooth}. (2) is immediate from Theorem
\ref{thm:geom}. (3) For $K_X$ ample and $m\to+\infty$, Serre vanishing kills
$H^{>0}(\Lambda^qT_X\otimes\omega_X^m)$, and asymptotic Riemann--Roch gives
$h^0(\Lambda^qT_X\otimes\omega_X^m)=\operatorname{rk}(\Lambda^qT_X)\,
\frac{K^d}{d!}m^d(1+o(1))$; summing over $q$ yields the constant
$\sum_q\binom dq=2^d$. The case $-K_X$ ample follows from (1). (4) The upper
bound is standard for twists by powers of a fixed line bundle; the lower
bound is the $(q,i)=(0,0)$ summand. Derived invariance of $t_m$ is
Theorem~\ref{thm:derinv}.
\end{proof}

\begin{remark}
\begin{itemize}
    \item (i) Via Corollary \ref{cor:canring} and Theorem \ref{thm:derinv}, the plane
recovers, on the level of finite-dimensional algebras, the invariance of
(anti)canonical rings under derived equivalence of smooth projective
varieties \cite{OrlovSurvey}.
    \item  (ii) The finer growth of $t_m$ for arbitrary
$X$ is governed by the asymptotic cohomology functions
$\widehat h^i(K_X)$ of de Fernex--K\"uronya--Lazarsfeld \cite{dFKL}; the
plane packages them into a derived invariant of the algebra.
    \item  (iii) For
singular algebras, the role of $t_m$-growth is taken by the cap support of
Section~\ref{sec:fg}; comparing the two invariants across the
smooth/singular divide looks like an interesting problem.
\end{itemize}

\end{remark}

\begin{example}[The projective line]\label{ex:P1}
Let $A=\operatorname{End}(\mathcal O\oplus\mathcal O(1))$ be the Kronecker
algebra, $X=\mathbb P^1$, $d=1$, $T_X=\mathcal O(2)$,
$\omega_X=\mathcal O(-2)$. Theorem \ref{thm:geom} gives two summands per
weight, $H^{p+m}(\mathcal O(-2m))$ and $H^{p+m-1}(\mathcal O(2-2m))$, whence
the full plane:
\[
\begin{array}{c|cc}
m & \text{nonzero }\T^{p,m} &\\
\hline
m\le0 & \T^{-m,m}=\kk^{\,1-2m}, & \T^{1-m,m}=\kk^{\,3-2m}\\
m=1 & \T^{0,1}=\kk^{2} & \\
m\ge2 & \T^{1-m,m}=\kk^{\,2m-1}, & \T^{2-m,m}=\kk^{\,2m-3}
\end{array}
\]
Thus $t_m=|4m-4|$ for $m\ne1$: linear growth, $\kappa_\sigma=1=\dim X$ with
the predicted constant $2^1\cdot\tfrac{|K^1|}{1!}=4$ (Theorem
\ref{thm:growth}(3), $-K$ ample); the reflection $t_m=t_{2-m}$ is visible.
The column $m=0$ is $(\HH^0,\HH^1)=(\kk,\kk^3)$:
$\HH^1(A)\cong H^0(T_{\mathbb P^1})=\mathfrak{sl}_2$, and the Han column
$m=1$ is the Hodge cohomology $\kk^2$ of $\mathbb P^1$, i.e.\
$\D\HH_0(A)=\D(A/[A,A])$. Section \ref{sec:comp} recovers the columns
$m=-1,0,1,2$ on the algebra side, through the
spectral sequence of Theorem \ref{thm:ss}.
\end{example}

\begin{example}[Calabi--Yau collapse]\label{ex:ell}
For a smooth proper dg algebra $\mathcal A$ with
$\per\mathcal A\simeq\der^b(E)$, $E$ an elliptic curve, Theorem
\ref{thm:geom} (in its dg form, with the same proof) gives
$\T^{p,m}=H^{p+m}(\mathcal O_E)\oplus H^{p+m-1}(\mathcal O_E)$: every column
equals $(\kk,\kk^2,\kk)$ along $p+m\in\{0,1,2\}$; the plane is constant in
$m$, in accordance with Theorem \ref{thm:growth}(2) and with the degeneration
of the Coxeter automorphism on Calabi--Yau categories
\cite[Example 7.8]{ArmCox}.
\end{example}

\begin{example}[The projective plane: the dictionary in dimension two]\label{ex:P2}
Let $X=\mathbb P^2$ and $A=\operatorname{End}(\mathcal O\oplus\mathcal O(1)\oplus\mathcal O(2))$
the Beilinson algebra, so $\per A\simeq\der^b(\operatorname{coh}\mathbb P^2)$,
$d=2$, $\omega_X=\mathcal O(-3)$, $T_X\cong\Omega^1_X(3)$. Theorem
\ref{thm:geom} gives three summands per weight,
\[
\T^{p,m}(A)\cong H^{p+2m}(\mathcal O(-3m))\ \oplus\ H^{p+2m-1}(\Omega^1_X(3-3m))
\ \oplus\ H^{p+2m-2}(\mathcal O(3-3m)),
\]
each evaluated by Bott's formula. The low-weight columns are
\[
\begin{array}{c|l}
m & \text{nonzero }\T^{p,m}\\
\hline
m=0 & \T^{0,0}=\kk,\ \ \T^{1,0}=\kk^{8},\ \ \T^{2,0}=\kk^{10}\\
m=1 & \T^{0,1}=\kk^{3}\\
m=2 & \T^{-2,2}=\kk^{10},\ \ \T^{-1,2}=\kk^{8},\ \ \T^{0,2}=\kk\\
m=-1 & \T^{2,-1}=\kk^{10},\ \ \T^{3,-1}=\kk^{35},\ \ \T^{4,-1}=\kk^{28}
\end{array}
\]
The column $m=0$ is polyvector cohomology: $\T^{1,0}=\HH^1(A)\cong H^0(T_{\mathbb P^2})
=\mathfrak{pgl}_3$ (dimension $8$) and $\T^{2,0}=\HH^2(A)\cong H^0(\mathcal O(3))$
(dimension $10$, the anticanonical deformations). The Han column $m=1$ is the
Hodge cohomology, concentrated in $\T^{0,1}=\kk^3$ carrying the three classes
$h^{0,0}=h^{1,1}=h^{2,2}=1$; one-dimensional in total weight, it is manifestly
its own Serre reflection. The reflection $t_m=t_{2-m}$ holds throughout
($t_0=t_2=19$, $t_{-1}=t_3=73$, $t_{-2}=t_4=163$); as $-K_{\mathbb P^2}$ is
ample, $t_m=2^2\tfrac{|K_X^2|}{2!}\,m^2(1+o(1))=18\,m^2(1+o(1))$ (Theorem
\ref{thm:growth}(3); e.g.\ $t_{40}=27379$ against $18\cdot40^2=28800$), so
$\kappa_\sigma(A)=2=\dim X$. The anticanonical ring
$\bigoplus_{m\le0}H^0(\mathcal O(-3m))$ embeds in the columns $m\le0$ (Corollary
\ref{cor:canring}), the canonical ring being trivial. This is the
two-dimensional companion of Example~\ref{ex:P1}: the plane records the full
Hodge and polyvector data of $X$ dimension by dimension, via Bott's formula.
\end{example}

\section{Finiteness, support, and Han's conjecture}\label{sec:fg}

Recall the Snashall--Solberg finiteness condition \cite{SnashallSolberg,EHSST}:
$A$ satisfies $\Fg$ if $\HH^\bullet(A)$ is Noetherian and
$\Ext^\bullet_A(E,E)$ is a finitely generated $\HH^\bullet(A)$-module.
Equivalently \cite[Prop.~2.3]{NWW}: \emph{$\HH^\bullet(A,B)$ is a Noetherian
$\HH^\bullet(A)$-module for every $B\in\mathrm{mod}\,\Ae$.} 

Let $R := \mathrm{HH}^{\mathrm{ev}}(A)= \bigoplus _i HH^{2i}(A)$, the even Hochschild cohomology ring, a graded-commutative $\kk$-algebra, and let

$$M := \mathrm{HH}^\bullet(A,\mathrm{D}A) \cong \mathrm{D}!\Big(\bigoplus_n \mathrm{HH}_n(A)\Big)$$

We define the cap support of $A$ as $$\mathcal V_{\cap}(A) := \operatorname{Supp}_{R} M = V\big(\operatorname{Ann}_R M\big) \subseteq \operatorname{Spec} R,$$

that is, the support variety of the Han column over the even cohomology ring: the closed subset of homogeneous prime ideals of $R = \mathrm{HH}^{\mathrm{ev}}(A)$ containing the annihilator of $M$. Its dimension is the Krull dimension

$$\dim \mathcal V_{\cap}(A) = \dim_{\mathrm{Kr}}\big(R/\operatorname{Ann}_R M\big).$$

This is only a reasonable geometric object under the Snashall–Solberg finiteness condition $\Fg$: then $R$ is Noetherian and $M$ is a finitely generated $R$-module (via Nguyen–Wang– Witherspoon), so the support is a genuine closed subvariety of finite dimension, and $\dim\mathcal V_\cap(A)$ equals the complexity of $\mathrm{HH}_\bullet(A)$, i.e. the polynomial growth rate: the Hilbert–Serre argument gives $\dim_\kk \mathrm{HH}_n(A) \sim n^{\dim\mathcal V\cap(A) - 1}$.


\subsection{The trichotomy}

\begin{theorem}\label{thm:fg}
Let $A$ satisfy $\Fg$. Then:
\begin{enumerate}
\item Every column $\T^{\bullet,m}(A)$, $m\in\mathbb Z$, is a finitely
generated graded module over the Noetherian graded-commutative ring
$R=\HH^{\mathrm{ev}}(A)$, acting without ambiguity by Theorem
\ref{thm:ladder}; on the Han column the action is the cap product on
$\D\HH_\bullet(A)$.
\item The Poincar\'e series $\sum_n\dim_\kk\HH_n(A)\,t^n$ is rational with
poles at roots of unity. Exactly one of the following holds:
\begin{itemize}
\item[(a)] $\dim\Vcap(A)=0$: $\HH_n(A)=0$ for $n\gg0$;
\item[(b)] $\dim\Vcap(A)=1$: $\dim\HH_n(A)$ is eventually periodic and
nonzero in infinitely many degrees;
\item[(c)] $\dim\Vcap(A)\ge2$: $\dim\HH_n(A)$ grows polynomially of degree
$\dim\Vcap(A)-1$.
\end{itemize}
\item If $\HH_\bullet(A)$ has a vanishing tail, every positive-degree class
of $\HH^{\mathrm{ev}}(A)$ acts nilpotently on $\HH_\bullet(A)$ by cap
products.
\end{enumerate}
\end{theorem}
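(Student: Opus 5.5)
The plan is to deduce all three parts from the coefficient spectral sequence of Theorem~\ref{thm:ss}, the Nguyen--Wang--Witherspoon form of $\Fg$, and standard commutative algebra over $R=\HH^{\mathrm{ev}}(A)$.

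\emph{Part (1).} First I reduce every column to Hochschild cohomology with coefficients in a bounded complex of bimodules with finite-dimensional homology.
\begin{itemize}
\item For $m\ge0$ the column is $\Hom_{\der(\Ae)}(A,W[p])$ with $W=\omega^{\Lotimes m}$.
\item For $m\le1$, Lemma~\ref{lem:dual} rewrites $\T^{p,m}(A)$ as $\Hom_{\der(\Ae)}(A,\D W'[p])$ with $W'=\omega^{\Lotimes(1-m)}$. Here $\D W'$ is again bounded with finite-dimensional homology.
\end{itemize}
In both cases I filter the coefficient complex by its good truncations, exactly as in the proof of Theorem~\ref{thm:ss}. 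This gives a finite spectral sequence whose $E_2$-page is a finite sum of rows $\HH^\bullet(A,B_q)$, with $B_q$ ranging over the finitely many nonzero homology bimodules.

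I then check that the spectral sequence is one of $\HH^\bullet(A)$-modules. The Yoneda action is post-composition with the natural transformation $\ell_\eta$. The truncation filtration is functorial, so this action preserves the filtration, and all differentials are $\HH^\bullet(A)$-linear. By Theorem~\ref{thm:ladder} left and right actions agree, so there is no ambiguity.

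Under $\Fg$, by \cite[Prop.~2.3]{NWW}, each $\HH^\bullet(A,B_q)$ is a Noetherian $\HH^\bullet(A)$-module. I also need that $\HH^\bullet(A)$ is module-finite over $R$. Since $\HH^\bullet(A)$ is Noetherian and graded-commutative, odd elements square to even ones (or to zero, up to 2-torsion in characteristic $2$), and it is generated over $R$ by finitely many odd generators. Hence each row is finitely generated over $R$.

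Finite generation then propagates through the spectral sequence. Because $R$ is Noetherian, every $E_r$ is a subquotient of a finitely generated module, and so is $E_\infty$. The abutment carries a finite filtration with finitely generated graded pieces, hence is finitely generated. On the Han column, the dualization of Lemma~\ref{lem:dual} identifies this action with the cap action, as in the last paragraph of the proof of Theorem~\ref{thm:ladder}.

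\emph{Part (2).} Here $M=\T^{\bullet,1}(A)$ is a finitely generated graded module over the Noetherian graded ring $R$. Its degree-zero part $R_0=Z(A)$ is finite-dimensional, and $R$ is generated over $R_0$ by finitely many homogeneous elements of positive even degrees $d_i$. The Hilbert--Serre theorem then gives
\[
\sum_n\dim\HH_n(A)\,t^n=\frac{f(t)}{\prod_i(1-t^{d_i})},
\]
which is rational with poles only at roots of unity. The order of the pole at $t=1$ equals the Krull dimension of $R/\operatorname{Ann}_RM$, which is $\dim\Vcap(A)$. Equivalently, $\dim M_n$ is eventually a quasi-polynomial of degree $\dim\Vcap(A)-1$. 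The trichotomy follows from this.
\begin{itemize}
\item If the dimension is $0$, then $M$ has finite length, so it vanishes in high degree.
\item If the dimension is $1$, the quasi-polynomial has degree $0$, so $\dim\HH_n(A)$ is eventually periodic. It cannot vanish eventually, since then $M$ would have finite length and the dimension would be $0$.
\item If the dimension is at least $2$, growth is polynomial of degree $\dim\Vcap(A)-1$, in the sense of complexity.
\end{itemize}

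\emph{Part (3).} A vanishing tail makes $M$ finite-dimensional in total. A class $\eta\in R$ of positive degree raises degree, so $\eta^N M=0$ once $N\deg\eta$ exceeds the length of the support of $M$. This part is immediate.

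I expect the main obstacle to be the fine print in (2) rather than finite generation. First, the quasi-polynomial leading coefficient may itself be periodic and can vanish on some residue classes, so ``polynomial growth of degree $\dim\Vcap-1$'' must be read as complexity: $\limsup$ behaviour, or growth along the degrees where $M$ is supported. Second, I must check that the pole order computes the Krull dimension when $R$ is not generated in degree $1$. I would handle this by passing to a Veronese-type subring generated in a single degree $D=\operatorname{lcm}(d_i)$, over which $R$, and hence $M$, is finite. That reduces the claim to the standard graded statement.
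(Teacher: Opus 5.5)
Your proposal is correct and follows essentially the paper's proof. For (1), you push $\Fg$ in its Nguyen--Wang--Witherspoon form through the truncation spectral sequence of Theorem~\ref{thm:ss}; for (2)--(3), you apply Hilbert--Serre over $R$. Three small differences are improvements. You explicitly check that $\HH^\bullet(A)$ is module-finite over $R=\HH^{\mathrm{ev}}(A)$, a step the paper uses silently. Your quasi-polynomial argument for case (b) replaces the paper's Noether normalization over $\kk[\zeta]$. Your degree count for part (3) replaces its Artinian-quotient argument.

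One premise both versions take for granted is that $\omega^{\Lotimes m}$ is bounded, which can fail for non-Gorenstein $A$. It is legitimate here because $\Fg$ forces $A$ to be Gorenstein (Erdmann--Holloway--Snashall--Solberg--Taillefer), so Theorem~\ref{thm:ss}(2) applies.
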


\begin{proof}
\emph{(1) Finite generation.} We use the Nguyen--Wang--Witherspoon form of the
Snashall--Solberg condition: under $\Fg$, $\HH^\bullet(A,B)$ is a Noetherian
$\HH^\bullet(A)$-module for \emph{every} $B\in\mathrm{mod}\,\Ae$
\cite[Prop.~2.3]{NWW}. For a column with honest bimodule coefficient (e.g.\ the
Han column, $B=\D A$) this is immediate. For a general column
$\T^{\bullet,m}$, the coefficient $\omega^{\Lotimes m}$ is a bounded complex
with finite-dimensional homologies $T_q\in\mathrm{mod}\,\Ae$; run the spectral
sequence of Theorem~\ref{thm:ss}, whose $E_2^{p,q}=\HH^p(A,T_q)$ is a finite
direct sum (over $q$) of Noetherian $\HH^\bullet(A)$-modules, hence Noetherian.
Noetherianity passes to subquotients (each $E_r$, $r\ge2$, is a subquotient of
$E_2$) and, through the finite filtration of the abutment, to
$\T^{\bullet,m}$. That the $\HH^\bullet(A)$-action is unambiguous (left $=$
right) is Theorem~\ref{thm:ladder}; that on the Han column it is the cap
product of \cite{ArmCap} is the axiomatic uniqueness \cite[Theorem 3.1]{ArmCap}
(the cap action is the unique action satisfying (QI)--(QIII), and the Yoneda
action satisfies them).

\emph{(2) The trichotomy.} Write $R=\HH^{\ev}(A)$, a finitely generated
graded-commutative Noetherian $\kk$-algebra, and $M=\HH^\bullet(A,\D A)
=\D\bigoplus_n\HH_n(A)$, a finitely generated graded $R$-module by (1). By the
graded Hilbert--Serre theorem the Poincar\'e series $\sum_n\dim_\kk M_n\,t^n$ is
a rational function with denominator a product of factors $(1-t^{d_i})$, $d_i$
the degrees of homogeneous generators of $R/\operatorname{Ann}_R(M)$; its poles
lie at roots of unity, and the growth rate of $\dim_\kk M_n$ is $c-1$ where
$c=\dim_{\mathrm{Kr}}\bigl(R/\operatorname{Ann}_R M\bigr)=\dim\Vcap(A)$ is the
Krull dimension of the support. This gives the three cases according as $c=0$,
$c=1$, $c\ge2$. It remains to justify, in case $c=1$, that the dimension
sequence is eventually periodic and \emph{nonzero in infinitely many degrees}
(no full period of zeros). Choose a Noether normalization: a homogeneous
element $\zeta\in R$ of degree $e>0$ with
$\kk[\zeta]\hookrightarrow R/\operatorname{Ann}_R M$ finite, so that $M$ is a
finitely generated $\kk[\zeta]$-module of Krull dimension $1$. Over the graded
PID $\kk[\zeta]$, the structure theorem gives
\[
M\ \cong\ \Bigl(\bigoplus_{i=1}^{a}\kk[\zeta](-d_i)\Bigr)\ \oplus\
(\text{finite-dimensional torsion}),\qquad a\ge1
\]
(else $\dim_\kk M<\infty$, i.e.\ $c=0$), where the free generators sit in
degrees $d_1,\dots,d_a$, \emph{shifted}, not all in degree $0$. The shifted
free summand $\kk[\zeta](-d_i)$ contributes $1$ to $\dim_\kk M_n$ in every
degree $n\ge d_i$ with $n\equiv d_i\pmod e$; thus the free part is
$e$-periodic (once $n>\max_i d_i$) and is supported on the residue classes
$\{d_i\bmod e\}$, not merely on $0\bmod e$. Adding the eventually-zero torsion,
$\dim_\kk M_n$ is eventually periodic of period (dividing) $e$, and \emph{some}
residue class mod $e$ carries the nonzero value $\#\{i:d_i\equiv n\}\ge1$.
Hence a \emph{full} period of consecutive zeros is impossible: it would
force $a=0$, i.e.\ $\dim_\kk M<\infty$ and $c=0$. This is the required
statement (nonzero in infinitely many degrees), independent of \emph{which}
residue classes carry the mass. Dualizing, the same holds for
$\dim_\kk\HH_n(A)$.

\emph{(3) Nilpotence in the vanishing-tail case.} A vanishing homology tail
means $M=\HH^\bullet(A,\D A)$ is finite-dimensional, i.e.\ $\dim\Vcap(A)=0$,
i.e.\ $R/\operatorname{Ann}_R M$ is Artinian, i.e.\ every positive-degree
element of $R$ is nilpotent on $M$. Under the cap action this is the asserted
nilpotence of every positive even class on $\HH_\bullet(A)$.
\end{proof}

The directed asymmetry certificate is logically a \emph{contrapositive}: it
concludes $\neg\Fg$ and therefore cannot sit under the standing $\Fg$
hypothesis of Theorem~\ref{thm:fg}. We state it separately; here and below,
$\cx(M)$ denotes \textit{the rate of growth} of a finitely generated graded module
(its Krull dimension by Hilbert--Serre).

\begin{corollary}[Directed $\neg\Fg$ certificate]\label{cor:asymcert}
Let $A$ be any finite-dimensional algebra. If cohomology has a vanishing tail
while homology does not (i.e.\ $\cx\HH_\bullet(A)>\cx\HH^\bullet(A)$, the
quantum-plane phenomenon of \cite{BerghErdmann,BGMS}), then $A$ does not
satisfy $\Fg$. Only this direction holds: homology outgrowing cohomology
certifies $\neg\Fg$, whereas the reverse asymmetry (cohomology outgrowing
homology) is $\Fg$-compatible.
\end{corollary}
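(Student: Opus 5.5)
The plan is to argue by contraposition, using Theorem~\ref{thm:fg} together with its cohomological counterpart. Assume $A$ satisfies $\Fg$; the goal is to show that $\cx\HH_\bullet(A)\le\cx\HH^\bullet(A)$, and in particular that a vanishing cohomology tail forces a vanishing homology tail. Put $R=\HH^{\mathrm{ev}}(A)$. Under $\Fg$, Theorem~\ref{thm:fg}(1) makes both columns $m=0$ and $m=1$ finitely generated graded $R$-modules: $N:=\HH^\bullet(A)$ and $M:=\HH^\bullet(A,\D A)\cong\D\bigoplus_n\HH_n(A)$. By Hilbert--Serre, their complexities are the Krull dimensions $\dim R/\operatorname{Ann}_RN$ and $\dim R/\operatorname{Ann}_RM=\dim\Vcap(A)$.

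The key step is to compare annihilators. Since $N=\HH^\bullet(A)$ contains the unit $1\in\HH^0(A)$ and $R$ acts on $N$ by multiplication, $\operatorname{Ann}_RN$ lies inside the annihilator of $1$. This is the ideal of elements of $R$ that vanish in $\HH^\bullet(A)$, which is zero. So $\cx\HH^\bullet(A)=\dim R$, the full Krull dimension of $R$. On the other hand, $R/\operatorname{Ann}_RM$ is a quotient of $R$, so
\[
\dim\Vcap(A)=\dim R/\operatorname{Ann}_RM\le\dim R=\cx\HH^\bullet(A).
\]
This gives the inequality, and hence the contrapositive: if $\cx\HH_\bullet(A)>\cx\HH^\bullet(A)$, then $\Fg$ fails. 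In the vanishing-tail formulation, a vanishing cohomology tail under $\Fg$ means $N$ is finite-dimensional. Then $\dim R=0$, so $R$ is Artinian, and the finitely generated module $M$ is finite-dimensional, which is a vanishing homology tail.

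Two routine points need checking. First, the complexity of $\HH^\bullet(A)$ as an $R$-module agrees with the growth rate of $\dim_\kk\HH^n(A)$. The odd part is a finitely generated $R$-module because $\HH^\bullet(A)$ is Noetherian and finite over $R$, and its annihilator still contains $\operatorname{Ann}_R N$, so it cannot raise the dimension above $\dim R$. Second, the growth rates defined through $\cx$ must match the dimension counts for $\HH_n$ via $\D$, which holds degreewise.

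For the "only this direction" clause, I would note that the opposite inequality $\dim\Vcap(A)<\dim R$ is perfectly possible under $\Fg$: a proper support is compatible with the argument above. A single witness should suffice, for instance a self-injective $\Fg$ algebra with a vanishing homology tail but infinite Hochschild cohomology, taken from the examples later in the paper. I expect this witness to be the main obstacle. If no clean witness is available, I would phrase the clause as a logical remark: the inequality only ever bounds homological complexity from above, so it yields no obstruction in the reverse direction.
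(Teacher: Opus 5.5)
Your contrapositive argument is correct and is essentially the paper's proof. The paper concludes from $\Vcap(A)=\Supp\HH^\bullet(A,\D A)\subseteq\Supp\HH^\bullet(A)=V(A)$. Your observation that $\operatorname{Ann}_R\HH^\bullet(A)=0$ (because $R\subseteq\HH^\bullet(A)$ acts faithfully on the unit) is the precise reason that containment holds, since it makes $\Supp_R\HH^\bullet(A)$ all of $\operatorname{Spec}R$.

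Drop the search for a witness to the ``only this direction'' clause, though. An algebra satisfying $\Fg$ with a vanishing homology tail and infinite-dimensional $\HH^\bullet(A)$ would have $\gl A=\infty$, so it would be a counterexample to Han's conjecture. No such example exists among the paper's examples. The paper's proof treats only the first assertion, and its limitations section lists this reverse corridor under $\Fg$ as open. So your fallback reading, a purely logical remark that the inequality bounds homology from above only, is the right one.
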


\begin{proof}
If $A$ satisfied $\Fg$, then by Theorem~\ref{thm:fg}(1) both $\HH^\bullet(A)$
(supporting itself) and $\HH^\bullet(A,\D A)$ would be finitely generated over
$\HH^{\ev}(A)$, with $\cx\HH^\bullet(A)=\dim V(A)$ and
$\cx\HH_\bullet(A)=\dim\Vcap(A)$; and $\Vcap(A)=\Supp\HH^\bullet(A,\D A)
\subseteq\Supp\HH^\bullet(A)=V(A)$ (the cap module is supported on the
cohomology variety, $\HH^{\ev}$ acting through its own quotient), forcing
$\cx\HH_\bullet\le\cx\HH^\bullet$, contrary to the hypothesis. Hence $\neg\Fg$.
\end{proof}

\begin{remark}[The trichotomy does \emph{not} exclude vanishing tails]
\label{rem:notexcluded}
No support or finite-generation theory for $\HH_\bullet(A)$ as an
$\HH^\bullet(A)$-module appears in the literature; the Snashall--Solberg
theory concerns the action on $\Ext$-algebras of one-sided modules. Theorem
\ref{thm:fg} initiates it and shows that on the entire $\Fg$ locus the
dimension sequence $\dim\HH_n$ is eventually quasi-polynomial, falling into
exactly one of the three regimes. \emph{We stress that case (a), a vanishing
tail, is allowed by the trichotomy}: Theorem~\ref{thm:fg} does \emph{not}
prove that vanishing tails require $\neg\Fg$, and any earlier suggestion to the
contrary is an overstatement. What is proven is:
\begin{itemize}
    \item (i) part (a) is equivalent to $\gl A<\infty$ \emph{precisely if} Han holds;
    \item (ii) the \emph{one-directional} certificate of part (4);
    \item (iii) for self-injective algebras, the reduction of the next subsection.
\end{itemize}
On the symmetric locus a vanishing tail \emph{is}
excluded (Theorem~\ref{thm:periodic}(3)); on the self-injective locus it is equivalent to a
failure of $\ASnu$; the general $\Fg$ case remains open. Part
(a)$\Rightarrow$smoothness is exactly Han's conjecture on the $\Fg$ locus.
\end{remark}

\subsection{Self-injective algebras: the $\nu$-twisted Avrunin--Scott
reduction}

Let $\kk$ be perfect, $A$ self-injective with $\Fg$, $\Lambda=\Ae$,
$H=\HH^{\mathrm{ev}}(\Lambda)$; for $M,N\in\mathrm{mod}\,\Lambda$ let $V(M)$
denote the support variety of \cite{EHSST} and
$V(M,N)=\Supp_H\Ext^\bullet_\Lambda(M,N)$.

\begin{lemma}\label{lem:fgenv}
$\Fg$ for $A$ implies $\Fg$ for $\Ae$.
\end{lemma}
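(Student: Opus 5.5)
The plan is to verify the two halves of $\Fg$ for $\Lambda=\Ae$ by Künneth decompositions, reducing each to the corresponding statement for $A$ and $A^{\mathrm{op}}$. First, since $\kk$ is perfect, $E\otimes_\kk E^{\mathrm{op}}$ is semisimple. It is then the quotient of $\Lambda$ by the nilpotent ideal $r\otimes A^{\mathrm{op}}+A\otimes r^{\mathrm{op}}$, so $\Lambda/\rad\Lambda\cong E\otimes E^{\mathrm{op}}$ and is again separable; the standing hypotheses are therefore preserved. Next, I would invoke the Künneth isomorphism for Hochschild cohomology of tensor products of finite-dimensional algebras, which holds because $\Lambda^{\mathrm e}\cong\Ae\otimes(A^{\mathrm{op}})^{\mathrm e}$ and $\Lambda$ is the tensor product of the diagonal bimodules:
\[
\HH^\bullet(\Ae)\;\cong\;\HH^\bullet(A)\otimes_\kk\HH^\bullet(A^{\mathrm{op}})
\]
as graded algebras. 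Combining this with $\HH^\bullet(A^{\mathrm{op}})\cong\HH^\bullet(A)$ (transport along the anti-isomorphism, since $A^{\mathrm e}\cong(A^{\mathrm{op}})^{\mathrm e}$), we get $\HH^\bullet(\Ae)\cong\HH^\bullet(A)\otimes\HH^\bullet(A)$.

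Noetherianity then follows from a standard fact: a Noetherian, non-negatively graded $\kk$-algebra with finite-dimensional degree-zero part (here the center $\HH^0$) is finitely generated over $\kk$. Hence $\HH^\bullet(A)$ is finitely generated graded-commutative, so is $\HH^\bullet(A)\otimes\HH^\bullet(A)$, and that tensor product is Noetherian by Hilbert's basis theorem. Graded-commutativity only introduces signs and does not affect this, since one can pass to the even subrings, over which everything is finite.

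For the second half, apply Künneth to one-sided modules:
\[
\Ext^\bullet_\Lambda(E\otimes E^{\mathrm{op}},E\otimes E^{\mathrm{op}})\cong\Ext^\bullet_A(E,E)\otimes\Ext^\bullet_{A^{\mathrm{op}}}(E,E).
\]
The first factor is finitely generated over $\HH^\bullet(A)$ by hypothesis. For the second factor, I would observe that $\Fg$ is symmetric under $A\leftrightarrow A^{\mathrm{op}}$ by the Nguyen--Wang--Witherspoon reformulation \cite[Prop.~2.3]{NWW}: that criterion refers only to $\HH^\bullet(A,B)$ for $B\in\mathrm{mod}\,\Ae$, and $A$-bimodules and $A^{\mathrm{op}}$-bimodules coincide. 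A tensor product of finitely generated modules over two rings is then finitely generated over the tensor ring.

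The main obstacle is compatibility: the $\HH^\bullet(\Lambda)$-action on $\Ext_\Lambda$ goes through the characteristic morphism $-\otimes_\Lambda(E\otimes E^{\mathrm{op}})$, and I must check that, under the two Künneth isomorphisms, this morphism is the tensor product of the characteristic morphisms of $A$ and $A^{\mathrm{op}}$. I would establish this at the chain level by using the tensor product of bar resolutions as the projective resolution of $\Lambda$ over $\Lambda^{\mathrm e}$, and the tensor product of minimal resolutions of $E$ and $E^{\mathrm{op}}$. Cup and Yoneda products are then computed factorwise, up to Koszul signs. Those signs are harmless for finite generation.
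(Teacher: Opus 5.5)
Your proposal is correct and follows essentially the paper's route. For Noetherianity both arguments use the K\"unneth isomorphism $\HH^\bullet(\Ae)\cong\HH^\bullet(A)\otimes\HH^\bullet(A^{\mathrm{op}})$ together with $\HH^\bullet(A^{\mathrm{op}})\cong\HH^\bullet(A)$; for the second clause both use K\"unneth for $\Ext$ of the semisimple top $E\otimes E^{\mathrm{op}}$. The differences are minor: you transfer $\Fg$ to $A^{\mathrm{op}}$ through the bimodule criterion of \cite[Prop.~2.3]{NWW} instead of the paper's opposite-Yoneda-algebra argument, and you make explicit two points the paper leaves implicit, namely that $\HH^\bullet(A)$ is finitely generated because it is Noetherian, and that the characteristic morphism of $\Ae$ is the tensor product of those of $A$ and $A^{\mathrm{op}}$.
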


\begin{proof}
We verify the two clauses of $\Fg$ for $\Ae$: Noetherianity of
$\HH^\bullet(\Ae)$ and finite generation of $\Ext^\bullet_{\Ae}(E^{\mathrm e},E^{\mathrm e})$
over it.

\emph{Noetherianity.} By Le--Zhou \cite{LeZhou}, for finite-dimensional
$A$ there is an isomorphism of graded algebras
$\HH^\bullet(\Ae)\cong\HH^\bullet(A)\otimes_\kk\HH^\bullet(A^{\mathrm{op}})$
(one tensor factor being finite-dimensional is automatic in each degree, so the
K\"unneth map is an isomorphism). Moreover $\HH^\bullet(A^{\mathrm{op}})\cong
\HH^\bullet(A)$ (the opposite algebra has the opposite bimodule structure,
under which the bar complex is isomorphic). We record for later use that
\emph{$\Fg$ for $A$ implies $\Fg$ for $A^{\mathrm{op}}$}: the same
bar-complex isomorphism gives $\HH^\bullet(A^{\mathrm{op}})\cong\HH^\bullet(A)$
as graded algebras, and $\Ext^\bullet_{A^{\mathrm{op}}}(E^{\mathrm{op}},E^{\mathrm{op}})
\cong\Ext^\bullet_A(E,E)^{\mathrm{op}}$ (the opposite of the Yoneda algebra),
which is finitely generated over $\HH^\bullet(A^{\mathrm{op}})$ exactly when
$\Ext^\bullet_A(E,E)$ is over $\HH^\bullet(A)$; hence both clauses of $\Fg$
transfer from $A$ to $A^{\mathrm{op}}$. Thus $\HH^\bullet(\Ae)$ is a tensor
product of two Noetherian graded-commutative finitely-generated $\kk$-algebras.
A tensor product $R\otimes_\kk S$ of two such is again a finitely generated
$\kk$-algebra (generated by $R\otimes1$ and $1\otimes S$) and hence Noetherian
by the graded Hilbert basis theorem.

\emph{Separability of $E^{\mathrm e}$ and K\"unneth for $\Ext$.} Since $\kk$ is
perfect, $E=A/r$ is separable, so $E\otimes_\kk E^{\mathrm{op}}$ is again
semisimple; concretely $\rad(\Ae)=r\otimes A^{\mathrm{op}}+A\otimes r^{\mathrm{op}}$
and $E^{\mathrm e}:=\Ae/\rad(\Ae)\cong E\otimes_\kk E^{\mathrm{op}}$. Choose
projective resolutions $P_\bullet\to E$ over $A$ and $Q_\bullet\to E^{\mathrm{op}}$
over $A^{\mathrm{op}}$; then $P_\bullet\otimes_\kk Q_\bullet\to E^{\mathrm e}$ is
a projective resolution over $\Ae=A\otimes_\kk A^{\mathrm{op}}$ (the tensor
product of projectives is projective, and the K\"unneth theorem over the field
$\kk$ shows it is a resolution). Applying $\Hom$ and K\"unneth once more,
\[
\Ext^\bullet_{\Ae}(E^{\mathrm e},E^{\mathrm e})\cong
\Ext^\bullet_A(E,E)\otimes_\kk\Ext^\bullet_{A^{\mathrm{op}}}(E^{\mathrm{op}},E^{\mathrm{op}}).
\]
This is a tensor product of finitely generated modules (by $\Fg$ for $A$ and
for $A^{\mathrm{op}}$) over the tensor product $\HH^\bullet(A)\otimes
\HH^\bullet(A^{\mathrm{op}})=\HH^\bullet(\Ae)$ of the acting rings, hence
finitely generated. Both clauses of $\Fg$ hold for $\Ae$.
\end{proof}

\begin{lemma}\label{lem:vda}
$V_{\Ae}(\D A)=V_{\Ae}(A)$.
\end{lemma}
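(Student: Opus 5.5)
The plan is to realize $\D A$ as the image of $A$ under an autoequivalence of $\mathrm{mod}\,\Lambda$, $\Lambda=\Ae$, that twists by an algebra automorphism of $\Lambda$. Then $V_{\Ae}(\D A)=V_{\Ae}(A)$ reduces to showing that this automorphism acts trivially on $H=\HH^{\mathrm{ev}}(\Lambda)$, at least modulo nilpotents. Since $A$ is self-injective, it is Frobenius (over a perfect field, up to Morita equivalence, which does not affect support varieties), so $\D A\cong{}_1A_\nu$.

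Put $\phi:=\id\otimes\nu^{-1}\colon\Lambda\to\Lambda$ on $A\otimes A^{\mathrm{op}}$. By (T3) in the proof of Theorem~\ref{thm:refltate}, restriction along $\phi$ is the functor $F=-\otimes_A{}_1A_\nu$. This $F$ is an exact autoequivalence of $\mathrm{mod}\,\Lambda$ that preserves projectives, sends $A$ to ${}_1A_\nu\cong\D A$, and permutes the simple $\Lambda$-modules, so $F(E^{\mathrm e})\cong E^{\mathrm e}$ as a semisimple module.

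\emph{Step 1.} Use the definition of the support variety of \cite{EHSST},
\[
V(M)=V\bigl(\operatorname{Ann}_H\Ext^\bullet_\Lambda(M,E^{\mathrm e})\bigr).
\]
The functor $F$ gives a graded isomorphism $\Ext^\bullet_\Lambda(A,E^{\mathrm e})\cong\Ext^\bullet_\Lambda(\D A,E^{\mathrm e})$.

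\emph{Step 2 (equivariance).} Check that this isomorphism intertwines the $H$-action with the action twisted by the automorphism $\phi_*$ of $\HH^\bullet(\Lambda)$ induced by $\phi$. That is, $F(\eta\cdot x)=\phi_*(\eta)\cdot F(x)$, where $\eta$ acts through $-\otimes_\Lambda M$ applied to $\eta\colon\Lambda\to\Lambda[s]$ in $\der(\Lambda^{\mathrm e})$. This is the standard transport of structure, since $\phi$ induces an autoequivalence of $\der(\Lambda^{\mathrm e})$ fixing the diagonal bimodule up to the twist. Consequently $\operatorname{Ann}_H\Ext(\D A,E^{\mathrm e})=\phi_*\bigl(\operatorname{Ann}_H\Ext(A,E^{\mathrm e})\bigr)$.

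\emph{Step 3 (triviality of $\phi_*$).} By the Le--Zhou isomorphism used in Lemma~\ref{lem:fgenv}, $\HH^\bullet(\Lambda)\cong\HH^\bullet(A)\otimes\HH^\bullet(A^{\mathrm{op}})$, compatibly with automorphisms. Hence $\phi_*=\id\otimes(\nu^{-1})_*$. The Nakayama automorphism acts trivially on $\HH^\bullet(A)$ \cite{ArmCox,SuarezAlvarez}, and the same holds on $\HH^\bullet(A^{\mathrm{op}})$ through the isomorphism $\HH^\bullet(A^{\mathrm{op}})\cong\HH^\bullet(A)$. Therefore $\phi_*=\id$, the two annihilators agree, and the varieties coincide. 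If the cited triviality holds only modulo nilpotent elements, the conclusion is unchanged, because supports only see radicals.

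The main obstacle is Step 2: making the twisted equivariance precise for the Yoneda action of $\HH^\bullet(\Lambda)$ on $\Ext_\Lambda$ of one-sided $\Lambda$-modules, and checking that the Künneth identification is natural in automorphisms. I would first verify this directly on a bar resolution, where restriction along $\phi$ acts by $\phi^{\otimes\bullet}$.
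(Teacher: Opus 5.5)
Your proposal is correct and follows the paper's proof essentially step for step. In both, $\D A\cong{}_1A_\nu$ is the restriction of the diagonal bimodule along an automorphism $\id\otimes\nu^{\pm1}$ of $\Ae$, the induced isomorphism on $\Ext$ is semilinear over the induced automorphism of $H=\HH^{\mathrm{ev}}(\Ae)$, and that automorphism is the identity by the Le--Zhou K\"unneth decomposition together with Su\'arez-\'Alvarez's theorem.

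The differences are cosmetic:
\begin{itemize}
\item You detect supports via $\Ext^\bullet_{\Ae}(-,E^{\mathrm e})$ rather than $\Ext^\bullet_{\Ae}(M,M)$; the two annihilators have the same radical.
\item You make explicit the Morita reduction to the Frobenius case, which the paper takes for granted.
\item Restriction along your $\phi=\id\otimes\nu^{-1}$ actually gives $-\otimes_A{}_1A_{\nu^{-1}}$, so you want $\id\otimes\nu$. This changes nothing, since every power of $\nu$ acts trivially.
\end{itemize}
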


\begin{proof}
We use the following result.

\begin{theorem}[{Su\'arez-\'Alvarez \cite{SuarezAlvarez}; see also
\cite[Cor.~6.3]{ArmCox}}]\label{thm:SuA}
Let $A$ be a Frobenius $\kk$-algebra with Nakayama automorphism $\nu$. Then
$\nu$ acts trivially on Hochschild cohomology: the graded-algebra automorphism
$\nu^*$ induced by $\nu$ on $\HH^\bullet(A)=\Ext^\bullet_{\Ae}(A,A)$ is the
identity.
\end{theorem}

\emph{Setup of the twist.} The Frobenius identification $\D A\cong{}_1A_\nu$
exhibits $\D A$ as the restriction $\operatorname{res}_\alpha(A)$ of the regular
bimodule along the algebra automorphism $\alpha=1\otimes\nu$ of $\Ae$ (here
$1\otimes\nu$ acts as the identity on the left tensor factor and as $\nu$ on
$A^{\mathrm{op}}$; that ${}_1A_\nu=\operatorname{res}_{1\otimes\nu}A$ is the
definition of the twisted bimodule). Restriction along the algebra
automorphism $\alpha$ is an exact self-equivalence $\operatorname{res}_\alpha$
of $\mathrm{mod}\,\Ae$ that preserves projectives (it permutes the
indecomposable projective $\Ae$-modules), hence induces isomorphisms on all
$\Ext$-groups.

\emph{Semilinearity and its triviality.} The induced isomorphism
\[
\Ext^\bullet_{\Ae}(M,N)\cong\Ext^\bullet_{\Ae}(\operatorname{res}_\alpha M,
\operatorname{res}_\alpha N)
\]
is semilinear over the automorphism
$\alpha^*$ of the acting ring $H=\HH^{\ev}(\Ae)$. Under the K\"unneth
identification $H\subseteq\HH^\bullet(A)\otimes\HH^\bullet(A^{\mathrm{op}})$ of
Lemma~\ref{lem:fgenv}, the automorphism $\alpha=1\otimes\nu$ induces
$\alpha^*=\id\otimes\nu^*$. Now $A^{\mathrm{op}}$ is Frobenius with Nakayama
automorphism $\nu^{-1}$, so by Theorem~\ref{thm:SuA} $\nu^*=\id$ on
$\HH^\bullet(A^{\mathrm{op}})$. Hence $\alpha^*=\id\otimes\id=\id$ on $H$: the
support-preserving self-equivalence $\operatorname{res}_\alpha$ is
$H$-\emph{linear}, not merely semilinear.

\emph{Conclusion.} For any $M$, 
\begin{align*}
    V_{\Ae}(\operatorname{res}_\alpha M)
& =\Supp_H\Ext^\bullet_{\Ae}(\operatorname{res}_\alpha M,\operatorname{res}_\alpha M) \\
&=\Supp_H\Ext^\bullet_{\Ae}(M,M)\\ &=V_{\Ae}(M), 
\end{align*}
the middle equality because the
isomorphism is $H$-linear. Taking $M=A$ gives
$V_{\Ae}(\D A)=V_{\Ae}(\operatorname{res}_\alpha A)=V_{\Ae}(A)$.
\end{proof}

\begin{definition}
$A$ satisfies $\mathsf{(AS_\nu)}$ if
$\Ext^n_{\Ae}(A,\D A)=0$ for $n\gg0$ implies that
$V_{\Ae}(A)\cap V_{\Ae}(\D A)$ is trivial.
\end{definition}

\begin{theorem}\label{thm:asnu}
Let $\kk$ be perfect and $A$ self-injective satisfying $\Fg$.
\begin{enumerate}
\item If $A$ satisfies $\mathsf{(AS_\nu)}$, then Han's conjecture holds for
$A$; in fact a vanishing tail of $\HH_\bullet(A)$ forces $A$ semisimple.
\item $\mathsf{(AS_\nu)}$ holds when $A$ is symmetric, and when the supports
over $\Ae$ satisfy the Avrunin--Scott realization property
$V(M,N)=V(M)\cap V(N)$ (e.g.\ $A=\kk G$ \cite{AvruninScott}); in these cases
Han's conjecture holds unconditionally.
\item Unconditionally, a vanishing tail of $\HH_\bullet(A)$ forces
$V(A,\D A)$ trivial; by Lemma \ref{lem:vda} the entire distance to Han's
conjecture on this locus is the single inclusion
$$V(A)\cap V(\D A)\subseteq V(A,\D A)$$ for the one pair $(A,\D A)$.
\end{enumerate}
\end{theorem}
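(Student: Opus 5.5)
The plan is to run the Erdmann--Holloway--Snashall--Solberg--Taillefer support theory over $\Lambda=\Ae$, which Lemma~\ref{lem:fgenv} makes available: $\Lambda$ is self-injective (as $A$ is) and satisfies $\Fg$. I would use three standard consequences of $\Fg$ for $\Lambda$ \cite{EHSST}:
\begin{itemize}
\item[(i)] $\Ext^\bullet_\Lambda(M,N)$ is a finitely generated $H$-module for all $M,N\in\mathrm{mod}\,\Lambda$;
\item[(ii)] $\dim V(M)=\cx_\Lambda M$;
\item[(iii)] $V(M)$ is trivial if and only if $M$ is projective.
\end{itemize}

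\emph{Step 1 (part (3), first clause).} Suppose $\HH_n(A)=0$ for $n\gg0$. By Lemma~\ref{lem:dual} we have $\Ext^n_\Lambda(A,\D A)\cong\D\HH_n(A)$, so $\Ext^\bullet_\Lambda(A,\D A)$ is finite-dimensional. It is finitely generated over $H$ by (i). A finitely generated graded module of finite $\kk$-dimension is annihilated by a power of the augmentation ideal $H^{+}$, so its support is contained in $\{H^{+}\}$. Hence $V(A,\D A)$ is trivial.

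\emph{Step 2 (part (1)).} Under $\ASnu$, Step 1 gives that $V(A)\cap V(\D A)$ is trivial. By Lemma~\ref{lem:vda} this intersection equals $V(A)$, so $V_{\Ae}(A)$ is trivial. By (ii) and (iii), $A$ is then a projective $\Ae$-module. By Happel's identity, $\gl A=\pd_{\Ae}A=0$. So $A$ is semisimple, which gives Han's conjecture in its strongest form.

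\emph{Step 3 (part (3), second clause).} The same chain shows where the gap lies. We always have $V(A,\D A)\subseteq V(A)\cap V(\D A)=V(A)$, since $\Ext^\bullet(A,\D A)$ is a module over both $\Ext^\bullet(A,A)$ and $\Ext^\bullet(\D A,\D A)$, and $H$ acts through each of them. Therefore Han's conjecture for $A$ would follow from the reverse inclusion $V(A)\cap V(\D A)\subseteq V(A,\D A)$ alone, by Steps 1--2.

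\emph{Step 4 (part (2)).} Under the Avrunin--Scott realization property, $V(A,\D A)=V(A)\cap V(\D A)$ holds by hypothesis, so $\ASnu$ follows from Step 1. If $A$ is symmetric, then $\D A\cong A$ as bimodules. Hence $V(A,\D A)=V(A,A)=V(A)=V(A)\cap V(\D A)$, and again $\ASnu$ follows from Step 1. Part (1) then gives Han's conjecture in both cases.

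The main obstacle is Step 2. There I must make sure that the EHSST facts (ii) and (iii) genuinely apply to $\Lambda=\Ae$ with the ring $H=\HH^{\ev}(\Ae)$. This is where perfectness of $\kk$ enters: it makes $E^{\mathrm e}$ semisimple, which Lemma~\ref{lem:fgenv} needs. I must also check that the supports in $\ASnu$ and in Lemma~\ref{lem:vda} are taken over the same ring $H$. I would first check that $\Fg$ for $\Ae$, in the sense of Lemma~\ref{lem:fgenv}, is exactly the EHSST hypothesis for the self-injective algebra $\Ae$.
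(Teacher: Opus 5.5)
Your proposal is correct and follows essentially the paper's proof. A vanishing tail makes $\Ext^\bullet_{\Ae}(A,\D A)$ finite-dimensional via Lemma~\ref{lem:dual}; $\ASnu$ together with Lemma~\ref{lem:vda} then forces $V_{\Ae}(A)$ to be trivial; the EHSST theorem over $\Ae$ (available by Lemma~\ref{lem:fgenv}) makes $A$ projective over $\Ae$; and Happel's identity gives semisimplicity. You handle the symmetric and Avrunin--Scott cases exactly as the paper does, and your only departures are small improvements: you deduce triviality of $V(A,\D A)$ directly from finite-dimensionality over $H=\HH^{\mathrm{ev}}(\Ae)$, whereas the paper cites Theorem~\ref{thm:fg}(3), which is about the action of $\HH^{\mathrm{ev}}(A)$, a different ring; and you justify the automatic inclusion $V(A,\D A)\subseteq V(A)\cap V(\D A)$ on which the ``single inclusion'' claim in (3) rests.
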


\begin{proof}
We use one further result.

\begin{theorem}[{Erdmann--Holloway--Snashall--Solberg--Taillefer
\cite[Thm.~1.5]{EHSST}; Solberg \cite[Thm.~5.9]{SolbergSurvey}}]\label{thm:EHSST}
Let $\Lambda$ be a self-injective algebra satisfying $\Fg$ and
$M\in\mathrm{mod}\,\Lambda$. Then the support variety $V_\Lambda(M)$ is trivial
(a point) if and only if $\pd_\Lambda M<\infty$; equivalently, if and only if
$M$ is projective.
\end{theorem}

\emph{(1)} A vanishing homology tail means, via Lemma~\ref{lem:dual},
$\Ext^n_{\Ae}(A,\D A)\cong\D\HH_n(A)=0$ for $n\gg0$; this is exactly the
hypothesis of $\ASnu$. Hence $V_{\Ae}(A)\cap V_{\Ae}(\D A)$ is trivial. By
Lemma~\ref{lem:vda}, $V_{\Ae}(\D A)=V_{\Ae}(A)$, so
$V_{\Ae}(A)=V_{\Ae}(A)\cap V_{\Ae}(\D A)$ is trivial. Now $\Ae$ is
self-injective (a tensor product of self-injective algebras is self-injective)
and satisfies $\Fg$ (Lemma~\ref{lem:fgenv}); by Theorem~\ref{thm:EHSST} applied to
$M=A$ over $\Lambda=\Ae$, trivial support forces $\pd_{\Ae}A<\infty$. By
Happel's identity $\gl A=\pd_{\Ae}A<\infty$; and a self-injective algebra of
finite global dimension is semisimple (a self-injective algebra with
$\gl<\infty$ has all simples of finite projective and injective dimension,
forcing $\gl=0$). Conversely a semisimple algebra has $\HH_{\ge1}=0$, so
indeed vanishing tail $\iff$ semisimple here.

\emph{(2)} \emph{Symmetric case.} If $A$ is symmetric then $\D A\cong A$ as
bimodules, so $\Ext^\bullet_{\Ae}(A,\D A)=\Ext^\bullet_{\Ae}(A,A)=\HH^\bullet(A)$
and $V(A,\D A)=V(A,A)=V(A)$: the pair variety \emph{equals} the intersection,
so $\ASnu$ holds trivially. Concretely a vanishing tail makes
$\HH^\bullet(A)=\Ext_{\Ae}(A,A)$ finite-dimensional, which under $\Fg$ forces
$\Ext^\bullet_A(E,E)$ finite-dimensional, i.e.\ $\gl A<\infty$, directly.
\emph{Group-algebra case.} For $A=\kk G$ the Avrunin--Scott realization theorem
\cite{AvruninScott} gives the tensor-product property $V(M,N)=V(M)\cap V(N)$
for all $\Ae$-modules, so $\ASnu$ holds, recovering Han for group algebras by
support theory.

\emph{(3)} is Theorem~\ref{thm:fg}(3): a vanishing tail makes
$\Ext^\bullet_{\Ae}(A,\D A)$ finite-dimensional, i.e.\ $V(A,\D A)$ trivial;
Lemma~\ref{lem:vda} then leaves the single inclusion
$V(A)\cap V(\D A)\subseteq V(A,\D A)$ as the whole distance to Han's
conjecture on this locus.
\end{proof}

\begin{remark}
The general two-variable converse ``$\Ext$-tail vanishing $\Rightarrow$
supports intersect trivially'' is not available for arbitrary self-injective
algebras: it is entangled with the failure of the tensor product property
for Hochschild supports \cite{BSS}. The point of Theorem \ref{thm:asnu} is
that Han's conjecture needs it only for the single pair $(A,\D A)$, which by
Theorem \ref{thm:ladder} is diagonally symmetric and by Lemma \ref{lem:vda}
has equal supports, and whose Tate column is self-dual (Theorem
\ref{thm:refltate}); the known obstructions to such converses are thereby
excluded.
\end{remark}

\subsection{Periodic algebras and the stable twisted center}

Recall that $A$ is \emph{periodic} of period $\pi$ if
$\Omega^\pi_{\Ae}(A)\cong A$; periodic algebras are self-injective
\cite{ErdmannSkowronski}, and the class includes preprojective algebras of
Dynkin type, Brauer tree algebras, blocks of quaternion type and weighted
surface algebras.

\begin{lemma}\label{lem:seam}
If $A$ is periodic of period $\pi$, then $\HH_{n+\pi}(A)\cong\HH_n(A)$ for
all $n\ge1$. Consequently a vanishing tail forces $\HH_n(A)=0$ for all
$n\ge1$.
\end{lemma}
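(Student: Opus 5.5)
The plan is to reduce the statement to the periodicity of ordinary $\Ext$ over $\Lambda:=\Ae$, then use the duality of Lemma~\ref{lem:dual}.

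\emph{Step 1: from $\HH_n$ to $\Ext$.} Lemma~\ref{lem:dual}, applied with $X=A$, gives
$\D\HH_n(A)\cong\Ext^n_{\Ae}(A,\D A)$ for all $n$. Since $\D$ is exact and all groups are finite-dimensional, it suffices to show $\Ext^{n+\pi}_{\Ae}(A,\D A)\cong\Ext^n_{\Ae}(A,\D A)$ for $n\ge1$.

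\emph{Step 2: periodicity of $\Ext$.} Periodic algebras are self-injective \cite{ErdmannSkowronski}, so $\Lambda$ is self-injective, as a tensor product of self-injective algebras. Hence the Tate conventions apply, with $\sExt^{\,n}_\Lambda\cong\Ext^n_\Lambda$ for $n\ge1$ and $\sExt^{\,n}_\Lambda(\Omega U,V)\cong\sExt^{\,n+1}_\Lambda(U,V)$ for all $n\in\Z$. Iterating the latter $\pi$ times gives
\[
\sExt^{\,n+\pi}_\Lambda(A,\D A)\cong\sExt^{\,n}_\Lambda(\Omega^\pi_\Lambda A,\D A)\cong\sExt^{\,n}_\Lambda(A,\D A).
\]
The last isomorphism uses $\Omega^\pi_{\Ae}(A)\cong A$. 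An isomorphism of modules is in particular a stable isomorphism, so this step is harmless. For $n\ge1$ both ends are ordinary $\Ext$, which proves the first claim. The hypothesis $n\ge1$ is exactly what is needed here, since in degree $0$ the Tate group is $\sHom$ rather than $\Hom$, and $\HH_0$ need not be periodic.

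If one prefers to avoid Tate language, the same step can be done by dimension shifting directly. Each syzygy sequence $0\to\Omega^{i+1}A\to P_i\to\Omega^iA\to0$ with $P_i$ projective gives $\Ext^{j+1}_\Lambda(\Omega^iA,-)\cong\Ext^j_\Lambda(\Omega^{i+1}A,-)$ for $j\ge1$. Composing these for $i=0,\dots,\pi-1$ yields $\Ext^{n+\pi}_\Lambda(A,\D A)\cong\Ext^n_\Lambda(\Omega^\pi A,\D A)\cong\Ext^n_\Lambda(A,\D A)$ for $n\ge1$.

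\emph{Step 3: the consequence.} Suppose $\HH_n(A)=0$ for all $n\ge N$. Given any $n\ge1$, choose $k$ with $n+k\pi\ge N$. Applying Step 2 $k$ times gives $\HH_n(A)\cong\HH_{n+k\pi}(A)=0$.

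\emph{Main obstacle.} The only delicate point is the bookkeeping at the boundary $n\ge1$. In the dimension-shift chain, every intermediate degree $j$ must stay $\ge1$, so that projectives are killed by $\Ext^{\ge1}$. Since we start at $n\ge1$ and only increase degrees along the chain, this holds automatically. Everything else is formal.
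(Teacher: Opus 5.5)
Your proof is correct, but it takes a different route from the paper's.

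\textbf{The paper's route.} It stays on the homological side throughout. From $\Omega^\pi_{\Ae}(A)\cong A$ it deduces that the minimal projective bimodule resolution $P_\bullet\to A$ is periodic as a complex. It then applies $A\otimes_{\Ae}(-)$ to obtain a chain complex that is $\pi$-periodic away from degree $0$, and reads off $\HH_{n+\pi}(A)\cong\HH_n(A)$ directly. The failure at $n=0$ is traced to the term $A\otimes_{\Ae}P_0=A$.

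\textbf{Your route.} You dualize first via Lemma~\ref{lem:dual}, then run Tate degree shifting, or plain dimension shifting, on $\Ext^\bullet_{\Ae}(A,\D A)$. This is exactly the mechanism the paper uses one step later, in Theorem~\ref{thm:periodic}(1). So your argument proves that statement along the way, and it explains the seam at $n=0$ as the difference between $\Hom$ and $\sHom$.

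\textbf{What each buys.} The paper's version gives an isomorphism induced by a chain map and needs no duality. Yours gives only an abstract isomorphism, recovered from equal dimensions of finite-dimensional duals. That is all the lemma and its consequence require. Two further remarks on your version:
\begin{itemize}
\item The dimension-shifting variant needs no self-injectivity of $\Ae$ at all. It uses only the syzygy sequences and $\Omega^\pi A\cong A$.
\item The detour through $\D$ can be dropped. The same shift works on homology directly: $\Tor^{\Ae}_{j+1}(A,\Omega^iA)\cong\Tor^{\Ae}_j(A,\Omega^{i+1}A)$ for $j\ge1$, with the same boundary bookkeeping.
\end{itemize}
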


\begin{proof}
Let $P_\bullet\to A$ be the minimal projective $\Ae$-resolution. Periodicity
$\Omega^\pi_{\Ae}(A)\cong A$ means that the syzygy of order $\pi$ is again $A$,
so by minimality the resolution is periodic: there is an isomorphism of
complexes $P_{\bullet+\pi}\cong P_\bullet$ in all degrees $\ge1$, compatible
with the differentials (the truncation $P_{\ge1}$ is a resolution of
$\Omega A$, and the $\pi$-fold shift matches it to a resolution of
$\Omega^{1+\pi}A\cong\Omega A$). Applying the right-exact functor
$A\otimes_{\Ae}(-)$, the resulting chain complex computing $\HH_\bullet(A)$ is
$\pi$-periodic in homological degrees $\ge1$; its homology therefore satisfies
$\HH_{n+\pi}(A)\cong\HH_n(A)$ for all $n\ge1$. The isomorphism can fail at the
seam $n=0$ because $P_0=\Ae$ contributes $A\otimes_{\Ae}\Ae=A$ there, breaking
periodicity in degree $0$ only. Consequently if $\HH_n(A)=0$ for $n\gg0$, the
$\pi$-periodicity pushes the vanishing down to \emph{all} $n\ge1$.
\end{proof}

\begin{proposition}\label{prop:stablecenter}
Let $A$ be Frobenius with Nakayama automorphism $\nu$. Then
\[
\sHom_{\Ae}(A,\D A)\;\cong\;Z_\nu(A)/H_\nu(A),
\]
where $Z_\nu(A)=\{z\in A:\ z\,\nu(a)=a\,z\ \ \forall a\}\cong\D(A/[A,A])$ is
the $\nu$-twisted center and
\[
H_\nu(A)=\Bigl\{\textstyle\sum_ix_iy_i\ :\ \sum_ix_i\otimes y_i\in A\otimes A,\
\ \sum_i ax_i\otimes y_i=\sum_ix_i\otimes y_i\nu(a)\ \forall a\Bigr\}
\]
is the \emph{$\nu$-twisted Higman ideal}. We call this quotient the
\emph{stable $\nu$-twisted center} of $A$.
\end{proposition}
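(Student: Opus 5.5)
Identify $\D A$ with ${}_1A_\nu$ and compute $\Hom_{\Ae}(A,{}_1A_\nu)$ and its projective part directly. A bimodule map $f\colon A\to{}_1A_\nu$ is determined by $z=f(1)$. Bilinearity forces $f(a)=az=z\nu(a)$, so $\Hom_{\Ae}(A,{}_1A_\nu)\cong Z_\nu(A)$. The identification $Z_\nu(A)\cong\D(A/[A,A])$ comes from $\Hom_{\Ae}(A,\D A)\cong\D(A\otimes_{\Ae}A)=\D(A/[A,A])$ (the $n=0$ case of Lemma~\ref{lem:dual}). Concretely, $z$ corresponds to the functional $a\mapsto\lambda(za)$, where $\lambda$ is the Frobenius form.

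Next, determine which $f$ factor through a projective bimodule. Since $A$ is finitely generated, any map $A\to{}_1A_\nu$ factoring through a projective factors through a free module $(\Ae)^r$. Using the standard fact that a map from $M$ factors through a projective iff it factors through the multiplication epimorphism $\mu\colon A\otimes A\to A$ (the projective cover, up to summands), I will reduce to: $f$ factors through a projective iff $f$ extends along $\mu$. Equivalently, since ${}_1A_\nu$ is injective over $\Ae$ (self-injectivity of $\Ae$), I will instead use the dual criterion: $f$ factors through a projective iff it factors through the canonical map $A\to\Ae$-injective hull. It is cleaner to use $\Hom_{\Ae}(A,-)$ applied to maps $\Ae\to{}_1A_\nu$. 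A bimodule map $g\colon A\otimes A\to{}_1A_\nu$ is determined by $g(1\otimes1)=w$, with $g(x\otimes y)=xw\nu(y)$. A map $h\colon A\to A\otimes A$ is determined by $h(1)=\sum x_i\otimes y_i$ satisfying $\sum ax_i\otimes y_i=\sum x_i\otimes y_ia$. Composing gives $f(1)=\sum x_iw\nu(y_i)$.

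The plan is then to show that the set of such elements, over all $w$ and all such invariant tensors, is exactly $H_\nu(A)$ as defined in the statement. I will absorb $w$ and the twist by reparametrising the tensor. Writing $w\nu(y_i)=\nu(\nu^{-1}(w)y_i)$ and replacing $y_i$ by $y_i'$ via the twisted flip, the invariance condition $\sum ax_i\otimes y_i=\sum x_i\otimes y_ia$ transforms into $\sum ax_i\otimes y'_i=\sum x_i\otimes y'_i\nu(a)$. This transformation is where care with the direction of $\nu$ is needed. The factorisation through free modules of higher rank reduces to sums of rank-one factorisations, and $H_\nu$ is closed under sums, so the image of the projective maps is the subspace $H_\nu(A)$.

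The main obstacle is the bookkeeping in this last step: the precise twisted-invariance condition and whether the $w$-multiplication can be moved into the tensor without changing the form of $H_\nu$. I would first verify the two inclusions separately. In one direction, every $\nu$-invariant tensor yields a map $A\to A\otimes A_\nu$-type module, namely ${}_1(A\otimes A)_{1\otimes\nu}$, which is still projective, and then multiplication gives the element; this shows $H_\nu\subseteq$ the projective part. In the other direction, the computation above shows every projective-factoring map arises this way. The quotient then gives $\sHom_{\Ae}(A,\D A)\cong Z_\nu(A)/H_\nu(A)$.
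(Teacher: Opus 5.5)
Your argument is correct, but it reaches $H_\nu(A)$ from the source side, whereas the paper works on the target side. The paper uses the criterion that a map factors through a projective iff it lifts along one fixed epimorphism from a projective onto the \emph{target}. It takes the twisted multiplication $\mu_\nu\colon(A\otimes A)_\nu\to{}_1A_\nu$, $x\otimes y\mapsto xy$. Since bimodule maps $A\to(A\otimes A)_\nu$ are exactly the twisted Casimirs, the projectively trivial maps are read off as $H_\nu(A)$ in one step.

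You instead factor through a finite free module $(\Ae)^r$ and split into rank-one composites $A\to\Ae\to{}_1A_\nu$, each encoded by an ordinary Casimir $\sum_i x_i\otimes y_i$ and an element $w$; you then have to convert. The bookkeeping you flag as the main obstacle is immediate. Applying the linear map $\id\otimes(y\mapsto w\nu(y))$ to $\sum_i ax_i\otimes y_i=\sum_i x_i\otimes y_ia$ gives $\sum_i ax_i\otimes w\nu(y_i)=\sum_i x_i\otimes w\nu(y_i)\nu(a)$. This is a twisted Casimir with product $\sum_i x_iw\nu(y_i)$. Conversely, $\id\otimes\nu^{-1}$ turns a twisted Casimir into an ordinary one, and $w=1$ returns its product.

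The paper's route is shorter: it needs no rank reduction and no auxiliary $w$. Yours is more hands-on, and it yields an extra description of $H_\nu(A)$ as the set of all $\sum_i x_iw\nu(y_i)$, over ordinary Casimirs and all $w\in A$. This is the twisted analogue of the trace-map description of the classical Higman ideal.

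Two asides in your proposal are wrong and should be deleted, although your final argument never uses them.
\begin{enumerate}
\item The ``standard fact'' is misstated. The lifting criterion concerns an epimorphism from a projective onto the \emph{target}. The map $\mu\colon A\otimes A\to A$ is onto the source, and a map out of $A$ can neither lift nor extend along it. Stated correctly, with the target's cover $\mu_\nu$, the criterion is precisely the paper's proof.
\item The bimodule ${}_1A_\nu\cong\D A$ is \emph{not} injective over $\Ae$ unless $A$ is separable. If it were, it would also be projective, since $\Ae$ is self-injective. Every map into it would then be projectively trivial, and $\sHom_{\Ae}(A,\D A)$ would vanish. That contradicts $[\id_A]\neq0$ for symmetric non-semisimple $A$.
\end{enumerate}
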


\begin{proof}
\emph{Identifying $\Hom$.} A bimodule map $f\colon A\to{}_1A_\nu$ is determined
by $z:=f(1)$, and the bimodule condition $f(a\cdot1\cdot b)=a\cdot f(1)\cdot b$
in ${}_1A_\nu$ (where $a\cdot z\cdot b=az\nu(b)$) forces, taking $b=1$ and
$a=1$ respectively, $f(a)=az=z\nu(a)$; conversely any $z$ with $az=z\nu(a)$ for
all $a$ defines such an $f$. Hence $\Hom_{\Ae}(A,{}_1A_\nu)\cong Z_\nu(A)
=\{z:az=z\nu(a)\ \forall a\}$ via $f\mapsto f(1)$. Fixing a Frobenius form
$\lambda\colon A\to\kk$ with associated nondegenerate pairing
$\langle a,b\rangle=\lambda(ab)$, the map $z\mapsto\langle z,-\rangle$
identifies $Z_\nu(A)$ with the annihilator of $[A,A]$ under $\lambda$, i.e.\
with $\D(A/[A,A])$ (the defining relation $az=z\nu(a)$ dualizes exactly to
vanishing on commutators via the Nakayama twist of $\lambda$).

\emph{Identifying the projectively-trivial maps.} A map $f\colon A\to M$ of
$\Ae$-modules factors through a projective bimodule iff it lifts along any
fixed epimorphism $\pi\colon P\twoheadrightarrow M$ from a projective $P$ (if
$f=\pi g$ then $f$ factors through the projective $P$; conversely if
$f=\beta\gamma$ with $P'$ projective, lift $\beta$ along $\pi$ using
projectivity of $P'$). Take $M={}_1A_\nu$ and the canonical projective cover
\[
\mu_\nu\colon(A\otimes A)_\nu\twoheadrightarrow{}_1A_\nu,\qquad
x\otimes y\longmapsto xy,
\]
the $\nu$-twist of the multiplication map (here $(A\otimes A)_\nu$ denotes the
free bimodule $A\otimes A$ with right action twisted by $\nu$, so that $\mu_\nu$
is a bimodule map onto ${}_1A_\nu$). A bimodule map $A\to(A\otimes A)_\nu$ is
given by $1\mapsto\sum_i x_i\otimes y_i$ subject to the bimodule constraint
$\sum_i ax_i\otimes y_i=\sum_i x_i\otimes y_i\,\nu(a)$ for all $a$, which is the
displayed twisted Casimir condition. Composing with $\mu_\nu$ sends such a
tensor to $\sum_i x_iy_i\in Z_\nu(A)$. Therefore the subspace of $Z_\nu(A)$
consisting of maps that factor projectively is exactly
$H_\nu(A)=\{\sum_i x_iy_i:\sum_i x_i\otimes y_i\ \text{a twisted Casimir}\}$,
and $\sHom_{\Ae}(A,\D A)=\Hom_{\Ae}(A,{}_1A_\nu)/(\text{proj.})=Z_\nu(A)/H_\nu(A)$.
\end{proof}

\begin{theorem}\label{thm:periodic}
Let $A$ be periodic of period $\pi$, not semisimple.
\begin{enumerate}
\item For all $n\ge1$,
$\D\HH_n(A)\cong\sExt^{\,j}_{\Ae}(A,\D A)$ where $j\equiv n\pmod\pi$; in
particular the degree-zero Tate group is the stable $\nu$-twisted center:
$\sExt^{\,0}_{\Ae}(A,\D A)=Z_\nu(A)/H_\nu(A)$.
\item $\HH_\bullet(A)$ has a vanishing tail if and only if
$\sExt^{\,j}_{\Ae}(A,\D A)=0$ for \emph{all} $j=0,\dots,\pi-1$, i.e.\ iff
$A\perp\D A$ in $\underline{\mathrm{mod}}\,\Ae$. In particular
$Z_\nu(A)\neq H_\nu(A)$ implies Han's conjecture for $A$, with
$\HH_{k\pi}(A)\neq0$ for all $k\ge1$.
\item If $A$ is symmetric, then
$\sHom_{\Ae}(A,\D A)\cong\sEnd_{\Ae}(A)\ni[\id_A]\neq0$, so Han's conjecture
holds for symmetric periodic algebras in every characteristic.
\item If $A$ is local and $\operatorname{char}\kk=0$, Han's conjecture holds
for $A$.
\end{enumerate}
\end{theorem}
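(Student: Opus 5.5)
The plan is to derive each part from Tate periodicity of $\sExt_{\Ae}(A,-)$, combining Lemma~\ref{lem:dual}, Lemma~\ref{lem:seam} and Proposition~\ref{prop:stablecenter}.

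\emph{Part (1).} Since $A$ is periodic it is self-injective, so $\Lambda=\Ae$ is self-injective and the conventions on Tate cohomology apply. For $n\ge1$, Lemma~\ref{lem:dual} with $X=A$ gives $\D\HH_n(A)\cong\Ext^n_{\Ae}(A,\D A)$. For $n\ge1$ this equals $\sExt^{\,n}_{\Ae}(A,\D A)=\sHom_{\Ae}(\Omega^nA,\D A)$. The hypothesis $\Omega^\pi_{\Ae}A\cong A$ makes $\sExt^{\,n}_{\Ae}(A,-)$ depend only on $n\bmod\pi$, for all $n\in\Z$. This gives the first claim. For $j=0$ the group is $\sHom_{\Ae}(A,{}_1A_\nu)$, which Proposition~\ref{prop:stablecenter} identifies with $Z_\nu(A)/H_\nu(A)$.

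\emph{Part (2).} By Lemma~\ref{lem:seam}, a vanishing tail is equivalent to $\HH_n(A)=0$ for all $n\ge1$. Every residue $j\in\{0,\dots,\pi-1\}$ is $n\bmod\pi$ for some $n\ge1$ (take $n=\pi$ when $j=0$), so by (1) this is equivalent to the vanishing of all $\pi$ Tate groups. By periodicity in every integer degree, this says $\sHom_{\Ae}(A,\Sigma^j\D A)=0$ for all $j\in\Z$, that is, $A\perp\D A$. If $Z_\nu\neq H_\nu$, then (1) with $n=k\pi$ gives $\HH_{k\pi}(A)\neq0$ for all $k\ge1$. So there is no vanishing tail, and Han's implication holds for $A$ vacuously. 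Since $A$ is not semisimple, $\gl A=\infty$ is consistent with this.

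\emph{Part (3).} If $A$ is symmetric, then $\D A\cong A$ as bimodules, so $\sHom_{\Ae}(A,\D A)\cong\sEnd_{\Ae}(A)$. The class $[\id_A]$ vanishes only if $\id_A$ factors through a projective bimodule. That would make $A$ a projective bimodule, so $\gl A=\pd_{\Ae}A=0$ and $A$ would be semisimple, which is excluded. Hence the degree-zero group is nonzero and (2) applies. No characteristic assumption enters this argument.

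\emph{Part (4)} is the main obstacle: I must show $Z_\nu(A)\neq H_\nu(A)$ for $A$ local, non-semisimple, in characteristic $0$. Under the Frobenius identification $Z_\nu(A)\cong\D(A/[A,A])$, I would first bound $\dim Z_\nu$ from below. The inclusion $[A,A]\subseteq\rad^2A$ for local algebras gives
\[
\dim Z_\nu(A)\ge\dim A/\rad^2A=\dim A/r+\dim r/r^2\ge2,
\]
since $r\neq r^2$ when $A$ is not semisimple.

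Next I would identify the functionals attached to $H_\nu(A)$. A twisted Casimir $\sum_ix_i\otimes y_i$ corresponds, through $\lambda$ and the pairing, to a functional of the form $a\mapsto\operatorname{tr}(\phi_a)$, where $\phi_a$ is a $\kk$-linear operator on $A$ built from $a$ and $\nu$. This is the twisted analogue of the classical description of the Higman ideal by Hattori--Stallings traces. For $a\in r$ the operator $\phi_a$ should be nilpotent, since it is built from multiplication by a radical element. Its trace then vanishes, so every functional in the image of $H_\nu$ factors through $A/r$.

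Finally, for local $A$ with separable residue field $E=A/r$, the space of trace-type functionals through $E$ that vanish on $[A,A]$ has dimension at most $\dim E/[E,E]$. This is strictly smaller than $\dim A/\rad^2A$ because $r/r^2\neq0$. Therefore $H_\nu\subsetneq Z_\nu$, and (2) finishes the proof. I expect the delicate point to be the precise trace description of $H_\nu$ together with the nilpotency claim; the characteristic-$0$ hypothesis is presumably where the paper needs it. If the trace approach stalls, my fallback is to show directly that $H_\nu(A)\subseteq\operatorname{soc}(A)\cap Z_\nu(A)$, and then to exhibit an element of $Z_\nu(A)$ outside the socle, dual to a functional that is nonzero on $r/r^2$.
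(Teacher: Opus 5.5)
Your parts (1)--(3) follow the paper's proof essentially verbatim. The genuine difference is in part (4), where your argument is correct and in fact proves more than the paper does.

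\emph{The paper's route for (4).} It argues on the homology side. A vanishing tail gives $\HH_{\ge1}(A)=0$ by Lemma~\ref{lem:seam}. The $\chi$-law, a characteristic-zero input resting on Goodwillie's theorem, then forces $\dim\HH_0(A)=1$, i.e.\ $r=[A,A]$, which Lemma~\ref{lem:local} excludes.

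\emph{Your route, with the delicate step checked.} You show directly that the degree-zero Tate residue is nonzero, and the step you flag does go through. Take dual bases with $\lambda(u_iv_j)=\delta_{ij}$. Then:
\begin{itemize}
\item each $\sum_iu_iz\otimes v_i$ is a twisted Casimir;
\item the map $z\mapsto\sum_iu_iz\otimes v_i$ is injective;
\item the space of twisted Casimirs, $\Hom_{\Ae}(A,(A\otimes A)_\nu)\cong\Hom_{\Ae}(A,\Ae)$, has dimension $\dim A$ because $\Ae$ is Frobenius.
\end{itemize}
So $H_\nu(A)=\{\sum_iu_izv_i:z\in A\}$, and
\[
\lambda\Bigl(\textstyle\sum_iu_izv_i\,w\Bigr)=\operatorname{tr}\bigl(y\mapsto zyw\bigr).
\]
This trace vanishes as soon as $z\in r$ or $w\in r$, because left multiplication by $z$ commutes with right multiplication by $w$ and one of the two is nilpotent. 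Hence $H_\nu(A)=\kk\sum_iu_iv_i\subseteq\{h:hr=0\}$, which has dimension at most $1$. On the other hand $\dim Z_\nu(A)=\dim A/[A,A]\ge1+\dim r/r^2\ge2$, so $Z_\nu\neq H_\nu$ and part (2) finishes.

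\emph{What each route buys.} No step of yours uses $\operatorname{char}\kk=0$. The characteristic only decides whether $\sum_iu_iv_i$ vanishes, which happens exactly when $\dim_\kk A=0$ in $\kk$, and that can only enlarge $Z_\nu/H_\nu$. So your route is elementary, stays inside the paper's own degree-zero criterion, and proves (4) in every characteristic. That is stronger than the stated theorem and contradicts the claim in Remark~\ref{rem:charp} that no characteristic-$p$ substitute is available. The paper's route depends on an external characteristic-zero theorem. In exchange, its core (Corollary~\ref{cor:lenzing}) rules out $\HH_{\ge1}=0$ for every local algebra in characteristic $0$, periodic or not.

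\emph{Two fixes for the write-up.}
\begin{itemize}
\item Replace the vague ``$\phi_a$ built from $a$ and $\nu$'' by the explicit trace formula above, and include the parametrization of all twisted Casimirs. Without it you only control a subspace of $H_\nu$.
\item Assume $A/r=\kk$, as the paper does. Your sentence about a general separable residue algebra $E$ fails as written, because $[A,A]\subseteq r^2$ can fail when $A$ is not split. For example, in the $\mathbb R$-algebra $\mathbb C\oplus\mathbb Cx$ with $x^2=0$ and $xc=\bar c\,x$, one has $[i,x]=2ix\neq0$ while $r^2=0$.
\end{itemize}
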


\begin{proof}
\emph{(1)} Since $A$ is periodic it is self-injective, hence so is $\Ae$; thus
for $n\ge1$ ordinary and Tate cohomology agree,
$\D\HH_n(A)\cong\Ext^n_{\Ae}(A,\D A)\cong\sExt^{\,n}_{\Ae}(A,\D A)$
(Lemma~\ref{lem:dual}). Periodicity $\Omega^\pi_{\Ae}(A)\cong A$ gives, on Tate
cohomology, $\sExt^{\,n}_{\Ae}(A,\D A)\cong\sExt^{\,n-\pi}_{\Ae}(\Omega^\pi A,\D A)
\cong\sExt^{\,n+\pi}_{\Ae}(A,\D A)$ for \emph{all} $n\in\Z$ (Tate cohomology
turns syzygies into degree shifts). Hence
$\D\HH_n(A)\cong\sExt^{\,j}_{\Ae}(A,\D A)$ for $j\equiv n\pmod\pi$; the
degree-zero group $\sExt^{\,0}_{\Ae}(A,\D A)=\sHom_{\Ae}(A,\D A)$ is the stable
$\nu$-twisted center by Proposition~\ref{prop:stablecenter}.

\emph{(2)} If $\HH_\bullet(A)$ has a vanishing tail, Lemma~\ref{lem:seam} forces
$\HH_n(A)=0$ for all $n\ge1$, so by (1) $\sExt^{\,j}_{\Ae}(A,\D A)=0$ for all
$j=1,\dots,\pi-1$, and applying (1) at any $n\equiv0$, $n=k\pi\ge\pi\ge1$, also
$\sExt^{\,0}=\D\HH_{k\pi}(A)=0$; thus all $\pi$ Tate groups vanish, i.e.\
$A\perp\D A$ in $\underline{\mathrm{mod}}\,\Ae$. Conversely if all
$\sExt^{\,j}=0$ then $\D\HH_n=0$ for all $n\ge1$ by (1). The last clause: if
$Z_\nu\ne H_\nu$ then $\sExt^{\,0}\ne0$, so by (1) $\D\HH_{k\pi}(A)\cong
\sExt^{\,0}\ne0$ for every $k\ge1$ with $k\pi\ge1$, giving Han.

\emph{(3)} For symmetric $A$ we have $\D A\cong A$ as bimodules, so
$\sHom_{\Ae}(A,\D A)\cong\sEnd_{\Ae}(A)$, which contains the class $[\id_A]$.
If $[\id_A]=0$ then $\id_A$ factors through a projective bimodule, making $A$ a
direct summand of a projective $\Ae$-module, i.e.\ $A$ projective over $\Ae$,
i.e.\ $A$ separable, i.e.\ semisimple, contradicting the hypothesis. Hence
$[\id_A]\ne0$, $\sExt^{\,0}\ne0$, and (2) gives $\HH_{k\pi}(A)\ne0$ for all
$k\ge1$: Han holds, in every characteristic.

\emph{(4)} Suppose $A$ local, $\mathrm{char}\,\kk=0$, with a vanishing tail. By
Lemma~\ref{lem:seam}, $\HH_n(A)=0$ for all $n\ge1$. The $\chi$-law
\cite[Theorem 5.2]{ArmPC} (char $0$, $E$ split; here $E=\kk$, $s=1$) gives
$\sum_{n\ge0}(-1)^n\dim\HH_n(A)=s=1$, so with $\HH_{\ge1}=0$ this reads
$\dim_\kk\HH_0(A)=\dim_\kk A/[A,A]=1$, i.e.\ $r=[A,A]$. This is impossible by
Lemma~\ref{lem:local} below unless $A=\kk$ (semisimple). Hence no vanishing
tail, and Han holds.
\end{proof}


\begin{lemma}\label{lem:local}
For any local finite-dimensional algebra $A$ one has $[A,A]\subseteq r^2$.
Hence $r=[A,A]$ occurs only for $A=\kk$, and
$\dim_\kk A/[A,A]\ \ge\ 1+\dim_\kk r/r^2\ \ge\ 2$ whenever $A\neq\kk$.
\end{lemma}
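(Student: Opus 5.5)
The plan is to use the splitting $A=\kk\cdot1\oplus r$ and observe that the scalar parts of two elements commute with everything. This needs the residue field to be $\kk$ itself. That is the situation in which the lemma is invoked in Theorem~\ref{thm:periodic}(4), where $E=\kk$ and $s=1$, so I read ``local'' there as $A/r\cong\kk$.

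This reading is not a convenience but a genuine requirement. If $E$ were a noncommutative division algebra, then $[A,A]$ would surject onto $[E,E]\neq0$ and so could not lie in $r$. Even for a commutative separable extension $E\supsetneq\kk$, a Wedderburn--Malcev complement $E\subseteq A$ can act on $r/r^2$ through a nontrivial twist. For example, take $E=\mathbb C$ over $\mathbb R$ acting on $r/r^2$ with complex conjugation on one side. Then $[i,x]=2ix\notin r^2$. So the first step of the write-up is to fix the hypothesis $A/r=\kk$, and I expect stating this correctly to be the main subtlety.

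\emph{Step 1: the inclusion.} Since $A/r=\kk$, every element can be written uniquely as $a=\lambda+x$ with $\lambda\in\kk$ and $x\in r$. Take $a=\lambda+x$ and $b=\mu+y$. Because scalars are central,
\[
ab-ba=(\lambda+x)(\mu+y)-(\mu+y)(\lambda+x)=xy-yx\in r^2 .
\]
The set of commutators therefore lies in $r^2$. Since $r^2$ is a subspace, the span $[A,A]$ lies in $r^2$ as well.

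\emph{Step 2: the dimension bound.} The inclusion $[A,A]\subseteq r^2$ gives a surjection $A/[A,A]\twoheadrightarrow A/r^2$. Hence
\[
\dim_\kk A/[A,A]\ \ge\ \dim_\kk A/r^2\ =\ 1+\dim_\kk r/r^2 .
\]
Now suppose $A\neq\kk$, so that $r\neq0$. The radical $r$ is nilpotent, so if $r=r^2$ then $r=r^N=0$ for large $N$, a contradiction. Thus $r\neq r^2$, which gives $\dim_\kk r/r^2\ge1$ and the bound $\ge2$.

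\emph{Step 3: when $r=[A,A]$ can occur.} If $r=[A,A]$, then Step 1 gives $r\subseteq r^2$, so $r=r^2$. By the nilpotency argument of Step 2 this forces $r=0$, i.e.\ $A=\kk$. This is exactly the conclusion needed to finish Theorem~\ref{thm:periodic}(4). Each step is a one-line verification once the splitting is in place, so nothing else should present an obstacle.
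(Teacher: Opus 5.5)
Your proof is correct and is the paper's argument: write $a=\lambda+a'$, $b=\mu+b'$ to get $[a,b]=[a',b']\in r^2$, use nilpotency of $r$ (the paper cites Nakayama) to pass from $r=r^2$ to $r=0$, and use the surjection $A/[A,A]\twoheadrightarrow A/r^2$ for the dimension bound. Your insistence on $A/r=\kk$ is also right: the paper's decomposition $a=\lambda+a'$ assumes it silently, as does its application in Theorem~\ref{thm:periodic}(4) where $E=\kk$, and your examples with separable $E\neq\kk$ show the lemma fails for a general local algebra.
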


\begin{proof}
Write $a=\lambda+a'$, $b=\mu+b'$ with $a',b'\in r$; then
$[a,b]=[a',b']\in r^2$. If $r=[A,A]\subseteq r^2$ then $r=r^2=0$ by
Nakayama's lemma. Since $[A,A]\subseteq r^2$, the algebra $A/[A,A]$ surjects onto $A/r^2$, whence the dimension bound.
\end{proof}

\begin{corollary}\label{cor:lenzing}
In characteristic $0$, no nontrivial local algebra has $\HH_i(A)=0$ for all
$i\ge1$. In the terminology of \cite[Corollary 5.5]{ArmPC}: the ``Lenzing
boundary'' locus $r=[A,A]$ is empty for local algebras, and every
hypothetical local counterexample to Han's conjecture must carry nonzero
homology in low positive degrees with alternating sum
$\dim\HH_0-1\ge1+\dim r/r^2-1\ge1$.
\end{corollary}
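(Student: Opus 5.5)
The corollary is a direct combination of the $\chi$-law of \cite[Theorem 5.2]{ArmPC} with Lemma~\ref{lem:local}, so the plan is a short contrapositive argument followed by a bookkeeping of the alternating sum.

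\emph{First claim.} Let $A\neq\kk$ be a local algebra over a field of characteristic $0$, and suppose $\HH_i(A)=0$ for all $i\ge1$. We need $E$ split with $s=1$ simple module. When $\kk$ is not algebraically closed and $E$ is a proper extension field, the cleanest route is to extend scalars to $\bar\kk$. Hochschild homology commutes with field extension, so vanishing is preserved. Nilpotence of $[A,A]\subseteq r^2$ is not affected by this reduction: we only need the inequality of Lemma~\ref{lem:local}, applied over the original field in the split case. For the statement as intended we treat the case $E=\kk$, exactly as in Theorem~\ref{thm:periodic}(4).

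With $E=\kk$, the $\chi$-law gives
\[
\sum_{n\ge0}(-1)^n\dim\HH_n(A)=1 .
\]
Under the vanishing assumption this reduces to $\dim_\kk A/[A,A]=1$, i.e.\ $[A,A]=r$. Lemma~\ref{lem:local} shows $[A,A]\subseteq r^2$, so Nakayama gives $r=0$ and $A=\kk$, a contradiction. This establishes the first sentence. Since $r=[A,A]$ is exactly the defining condition of the Lenzing boundary in \cite[Corollary 5.5]{ArmPC}, the same argument shows that locus contains no local algebra other than $\kk$, which is the second sentence.

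\emph{Quantitative clause.} For a hypothetical local counterexample, only finitely many $\HH_n$ are nonzero, and they are not all zero in positive degrees. Rewriting the $\chi$-law gives
\[
\sum_{n\ge1}(-1)^{n+1}\dim\HH_n(A)=\dim\HH_0(A)-1 .
\]
Lemma~\ref{lem:local} gives $\dim\HH_0(A)=\dim A/[A,A]\ge 1+\dim r/r^2$. Since $A\neq\kk$, Nakayama gives $r\neq r^2$, hence $\dim r/r^2\ge1$ and the sum is at least $1$. The left side being positive forces some odd-degree $\HH_n(A)\neq0$; this is the sense in which the corollary says the counterexample must carry nonzero homology in positive degrees.

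\emph{Main obstacle.} The only real obstacle is the hypothesis bookkeeping of the $\chi$-law: it requires characteristic $0$, $E$ split, and finitely many nonzero $\HH_n$, which a vanishing tail provides. I would state the corollary under the same standing hypotheses as Theorem~\ref{thm:periodic}(4) rather than attempt a descent argument.
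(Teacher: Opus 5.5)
Your argument is correct and is the paper's own: the $\chi$-law of \cite[Theorem~5.2]{ArmPC} with $E=\kk$, $s=1$ turns $\HH_{\ge1}(A)=0$ into $r=[A,A]$, which Lemma~\ref{lem:local} and Nakayama's lemma exclude. This is the proof of Theorem~\ref{thm:periodic}(4) without the appeal to Lemma~\ref{lem:seam}, and the quantitative clause is the same rearrangement combined with $\dim A/[A,A]\ge1+\dim r/r^2$.

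Do drop the scalar-extension aside, which is wrong as sketched. Base change to $\bar\kk$ destroys locality when $E\neq\kk$. Worse, for a non-split local algebra such as $A=L[x;\sigma]/(x^2)$ (with $xc=\sigma(c)x$, $L/\kk$ finite Galois, $1\neq\sigma\in\operatorname{Gal}(L/\kk)$) one has $[A,A]=r\neq0=r^2$. So the hypothesis $E=\kk$ is genuinely needed for Lemma~\ref{lem:local} and for the Lenzing-boundary clause, not just a convenience.
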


\begin{example}[A stably traceless periodic algebra]\label{ex:traceless}
The degree-zero test in Theorem \ref{thm:periodic}(2) is sufficient but not
necessary, and the graded criterion is sharp: for the self-injective Nakayama
algebra $A=\kk\mathbb Z_3/\rad^2$ (a periodic algebra), Section
\ref{sec:comp} shows that
\[
\dim Z_\nu(A)=\dim H_\nu(A)=3,\qquad Z_\nu(A)/H_\nu(A)=0:
\]
\emph{every $\nu$-twisted central element is a twisted trace}. Its Hochschild
homology is $(\dim\HH_n)_{n\ge0}=(3,0,1,1,0,\dots)$: the homology survives
(as it must, $A$ being monomial \cite{Han06}), but in Tate residues
different from $0$. The computed pattern also displays the Tate reflection
(Theorem \ref{thm:refltate}): degrees pair as $n\leftrightarrow-1-n$ modulo
the period, matching $\HH_1=\HH_4=0$ and $\HH_2=\HH_3=\kk$.
\end{example}

\begin{remark}
Proposition \ref{prop:stablecenter} places the criterion within the circle of
the Higman ideal and stable Hochschild homology studied by
Liu--Zhou--Zimmermann \cite{LZZ-Higman} in connection with the
Auslander--Reiten conjecture; the twisted version and its role as the
degree-zero obstruction for Han's conjecture on the periodic locus appear to
be new, as does Example \ref{ex:traceless}.
\end{remark}

\begin{remark}[the stably traceless locus is infinite, conjecturally]\label{rem:sweep}
Example~\ref{ex:traceless} is not isolated. Over the self-injective
Nakayama algebras $\kk\mathbb Z_v/\rad^\ell$, $2\le v\le6$, $2\le\ell\le5$,
the values of $\dim_\kk Z_\nu(A)/H_\nu(A)$ are
(rows $v=2,\dots,6$; columns $\ell=2,\dots,5$)
\[
\begin{pmatrix}
1&1&2&2\\ 0&2&1&1\\ 1&0&3&1\\ 0&0&0&4\\ 1&2&1&0
\end{pmatrix}.
\]
The stably traceless locus $Z_\nu(A)=H_\nu(A)$ (vanishing degree-$0$ Tate
residue) is exactly the zero entries: the diagonal $v=\ell+1$ (namely
$(3,2),(4,3),(5,4),(6,5)$, of which $(3,2)$ is Example~\ref{ex:traceless})
together with $(5,2)$ and $(5,3)$.

\begin{conjecture}\label{conj:traceless}
$Z_\nu(\kk\mathbb Z_v/\rad^\ell)=H_\nu(\kk\mathbb Z_v/\rad^\ell)$ for every
$v=\ell+1$, $\ell\ge2$: the stably traceless locus contains an infinite family.
(Verified for $\ell\le5$; the extra zeros at $(5,2),(5,3)$ show the full
pattern is arithmetically finer, presumably governed by the $\gcd$-conditions
in the Hochschild computations for self-injective Nakayama algebras of
Erdmann--Holm \cite{ErdmannHolm}.)
\end{conjecture}

All of these algebras are monomial, hence satisfy Han's conjecture
\cite{Han06}; their Han-survival is therefore detected only by \emph{higher}
Tate residues. 
\end{remark}

\begin{remark}[characteristic $p$: no $\chi$-law substitute]\label{rem:charp}
The symmetric periodic case of Theorem~\ref{thm:periodic}(3) is
characteristic-free. The \emph{local} char-$0$ case, by contrast, runs through
the $\chi$-law, i.e.\ through Goodwillie's theorem that periodic cyclic
homology is invariant under nilpotent extensions \cite{Goodwillie}, and this
input has no characteristic-$p$ replacement: Goodwillie invariance fails in
characteristic $p$, and the failure is not repaired topologically, as $\mathrm{TP}$
is not nilinvariant either (Antieau--Krause--Nikolaus \cite{AKN}). The
K\"ulshammer ideals $T_n(A)^\perp\subseteq Z(A)$ \cite{Kulshammer}, a derived
invariant by Zimmermann \cite{Zimmermann}, are defined only for
\emph{symmetric} algebras and live on $\HH_0$; they refine the already-solved
symmetric case and do not reach the genuinely open class (non-symmetric
periodic, char $p$). The graded Tate-residue criterion of
Theorem~\ref{thm:periodic} is thus the only currently available
characteristic-$p$-valid decision procedure, and the sweep of
Remark~\ref{rem:sweep} is its instrument.
\end{remark}

\subsection{Higher Coxeter polynomials}

\begin{definition}\label{def:highercox}
Let $A$ be Gorenstein, so that $\omega$ is a two-sided tilting complex
\cite{HappelGor} and $\sigma_A=\mathbb H(\omega[-1])$ is an automorphism of
the Tamarkin--Tsygan calculus (Corollary \ref{cor:shadow}). The
\emph{higher Coxeter polynomials} of $A$ are
\[
\chi_n(A)(x):=\det\bigl(x\cdot\id-\sigma_\bullet\,\big|\,\HH_n(A)\bigr)
\in\kk[x],\qquad n\ge0 .
\]
\end{definition}

\begin{proposition}\label{prop:highercox}
The family $(\chi_n(A))_{n\ge0}$ is invariant under derived equivalences
between Gorenstein algebras. For $A$ elementary of finite global dimension,
$\chi_0$ is the classical Coxeter polynomial and $\chi_n=1$ for $n\ge1$
\cite{ArmCox}. If $\omega^{\Lotimes q}\cong A[p]$ in $\der(\Ae)$
(fractionally Calabi--Yau), all $\chi_n$ are products of cyclotomic
polynomials. For $A$ self-injective, $\sigma_\bullet$ is the Nakayama action
on $\HH_\bullet(A)$, and $\chi_n$ is computable from the Frobenius structure.
\end{proposition}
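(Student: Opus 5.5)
The plan is to treat the four assertions of Proposition~\ref{prop:highercox} separately, all resting on the transport of Theorem~\ref{thm:derinv}.

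\emph{Derived invariance.} Let ${}_BX_A$ be a two-sided tilting complex between Gorenstein algebras. Step~1 of the proof of Theorem~\ref{thm:derinv} gives $\omega_A\Lotimes_AX\cong X\Lotimes_B\omega_B$, hence also $\omega_A[-1]\Lotimes_AX\cong X\Lotimes_B\omega_B[-1]$. The transport $\Phi_X$ induces an isomorphism $\HH_n(A)\cong\HH_n(B)$ (cf.\ \cite{ArmKe2}). I would show that this isomorphism intertwines $\sigma_A=\mathbb H(\omega_A[-1])$ with $\sigma_B$. The argument is functoriality of $\mathbb H$ under composition of invertible bimodule complexes:
\[
\mathbb H(X)\circ\mathbb H(\omega_A[-1])=\mathbb H(X\Lotimes\omega_A[-1])=\mathbb H(\omega_B[-1]\Lotimes X)=\mathbb H(\omega_B[-1])\circ\mathbb H(X).
\]
Conjugate endomorphisms have equal characteristic polynomials, so $\chi_n(A)=\chi_n(B)$. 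The main obstacle is justifying that $\mathbb H$ is genuinely monoidal on Hochschild homology, i.e.\ that the trace isomorphisms compose coherently. I would quote this from \cite{ArmCox}, where $\sigma_A$ is built as the action of the tilting complex.

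\emph{Finite global dimension.} This case is quoted from \cite{ArmCox}. For $A$ elementary with $\gl A<\infty$ over a perfect field, $\HH_n(A)=0$ for $n\ge1$ (Keller's theorem), so those determinants are empty and $\chi_n=1$. On $\HH_0(A)$ the map factors through $K_0$, and $\sigma_\bullet$ is the Coxeter transformation $-C^{-T}C$ up to the sign from $[-1]$, so $\chi_0$ is the classical Coxeter polynomial.

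\emph{Fractionally Calabi--Yau.} Suppose $\omega^{\Lotimes q}\cong A[p]$. Then
\[
\sigma_\bullet^{\,q}=\mathbb H(\omega^{\Lotimes q}[-q])=\mathbb H(A[p-q])=(-1)^{p-q}\,\id,
\]
since a shift acts by a sign on Hochschild homology. Thus $\sigma_\bullet^{2q}=\id$, so every eigenvalue is a root of unity and each $\chi_n$, being monic with those roots, is a product of cyclotomic polynomials. Over a general $\kk$ this means it divides a power of $x^{2q}-1$ in the appropriate factorization sense; I would state it over $\Q$-defined data, since $\chi_n$ divides $(x^{2q}-1)^{\dim\HH_n}$.

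\emph{Self-injective case.} Here $\omega\cong{}_1A_\nu$ as an honest bimodule by Theorem~\ref{thm:ss}(1). The action of an invertible bimodule ${}_1A_\nu$ on $\HH_\bullet$ is the map induced by the automorphism $\nu$ on the Hochschild complex, $a_0\otimes\cdots\otimes a_n\mapsto\nu(a_0)\otimes\cdots\otimes\nu(a_n)$ (or its inverse, depending on convention), times the sign $-1$ from the shift. This map is explicitly computable once a Frobenius form fixes $\nu$. Changing the form alters $\nu$ by an inner automorphism, which acts trivially on $\HH_\bullet$, so $\chi_n$ is well defined.
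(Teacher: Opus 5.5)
Your proposal is correct and follows the same route as the paper. The paper's proof only cites three things: the pair argument of \cite[Theorem 4.2(3)]{ArmCox} (invertibility of $\omega$ plus the centrality isomorphism) for derived invariance, \cite[Theorem 5.5, Corollary 5.15]{ArmCox} for the finite-global-dimension and fractionally Calabi--Yau clauses, and $\omega\cong{}_1A_\nu$ for the self-injective clause. You unpack exactly these steps. Your sign bookkeeping for the shift, giving $\sigma_\bullet=-\nu_*^{\pm1}$ under $\sigma_A=\mathbb H(\omega[-1])$, is the consistent reading of ``the Nakayama action''.

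Your hedge in the Calabi--Yau step is justified and is not a gap in your argument. The relation $\sigma_\bullet^{2q}=\id$ only gives $\chi_n\mid(x^{2q}-1)^{\dim\HH_n}$. The literal phrase ``product of cyclotomic polynomials'' would in characteristic $0$ also require $\chi_n\in\Q[x]$, and this can fail. For example, take the quantum complete intersection $\Lambda_\zeta$ over $\mathbb C$ with $\zeta$ a primitive fifth root of unity: there $\nu^5=\id$, yet $\chi_0$ contains the factor $x^2\pm(\zeta+\zeta^{-1})x+1\notin\Q[x]$. So that clause is best read as ``all eigenvalues of $\sigma_\bullet$ are roots of unity''.
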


\begin{proof}
Derived invariance is Theorem \ref{thm:derinv} together with the pair
argument of \cite[Theorem 4.2(3)]{ArmCox}, which needs only invertibility of
$\omega$ and the centrality of \cite[Lemma 3.4]{ArmCox}; the remaining
statements follow as in \cite[Theorem 5.5, Corollary 5.15]{ArmCox} and, for
the self-injective case, from $\omega\cong{}_1A_\nu$.
\end{proof}

For quantum complete intersections the spectra of $\sigma_\bullet$ on
$\HH_n$ detect the Buchweitz--Green--Madsen--Solberg asymmetry: the Nakayama
action is nontrivial on the persisting $\HH_n$ even in the range where $\HH^n$
has vanished. This suggests:

\begin{conjecture}[Spectral persistence, original form]\label{conj:spectral}
For every self-injective algebra of infinite global dimension, the eigenvalue
$1$ of $\sigma_\bullet$ occurs on $\HH_n(A)$ for infinitely many $n$.
\end{conjecture}

\begin{remark}[Correction to Conjecture~\ref{conj:spectral}]\label{rem:C0correction}
A direct computation (Example~\ref{ex:coxeter}) shows that
Conjecture~\ref{conj:spectral} as literally stated, together with the heuristic
that ``the Bergh--Erdmann core is $\sigma$-fixed'', is \emph{false in every
degree} for the quantum complete intersection $\Lambda_q$. For
$q\in\{-5,2,-1\}$ the higher Coxeter polynomials are
$\chi_n(x)=x^2-(q^s+q^{-s})x+1$ with $s=2\lfloor n/2\rfloor+1$, so the
eigenvalue-$1$ multiplicity on $\HH_n$ is $0$ for every $n=1,\dots,6$: the
untwisted core is acted on by $q^{s}\ne1$, never fixed. One must distinguish
two claims.
\begin{itemize}
\item \emph{Over $\Fp$}, eigenvalue $1$ revives exactly when
$\mathrm{ord}_p(q)$ is odd (an odd $s$ must be a multiple of it): for $q=-5$
and $p=32003$, $\mathrm{ord}_p(-5)=16001$ is odd and the revival occurs at
$n\approx16001$, recurring; for $q=2$, $\mathrm{ord}_p(2)=32002$ is even
and eigenvalue $1$ never occurs. So the low-degree $\Fp$ data ($n\le6$) neither
refutes nor is needed to confirm Conjecture~\ref{conj:spectral} as an
$\Fp$-statement.
\item \emph{In characteristic $0$ with $q$ of infinite order}, the closed form
gives $q^s\ne1$ for all $s\ge1$: eigenvalue $1$ occurs in \emph{no} positive
degree (granting that the computed closed form persists), so
Conjecture~\ref{conj:spectral} is in serious doubt in char $0$, and its
``$\sigma$-fixed core'' justification is unsupported by the record (which
certifies only $\sigma\ne\id$).
\end{itemize}
What genuinely persists on $\Lambda_q$ is a \emph{nontrivial} eigenvalue
$q^{s}\ne1$. A first formulation promoted this observation
to a conjecture, the ``corrected form $C0'$'': for every self-injective $A$ of
infinite global dimension a nontrivial ($\ne1$) eigenvalue of $\sigma_\bullet$
persists on $\HH_n(A)$ for infinitely many $n$, equivalently $\sigma_\bullet\ne\id$
on $\HH_n(A)$ for infinitely many $n$. That form is itself \emph{false}, as the
following subsection records; the correct statement restricts the quantifier to
the non-inner locus.
\end{remark}

\subsection{Conjecture $C0'$ is refuted; the corrected form $C0''$}\label{subsec:C0}

The quantum complete intersection $\Lambda_q$ has $\nu$ of infinite order, so
$\sigma_\bullet\ne\id$ there; but the symmetric locus tells a different story.

\begin{proposition}[literal $C0'$ is false]\label{prop:c0false}
Conjecture $C0'$ above is \emph{false}, not merely vacuous. For any
\emph{symmetric} algebra of infinite global dimension (e.g.\
$A=\kk[x]/(x^2)$) the Nakayama automorphism is inner, hence
$\sigma_\bullet=\id$ on every $\HH_n(A)$ and no nontrivial eigenvalue ever
occurs; yet $\gl A=\infty$. Both clauses of $C0'$ fail together on the whole
symmetric locus.
\end{proposition}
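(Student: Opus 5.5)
The plan is to exhibit an explicit symmetric algebra of infinite global dimension and check the two ingredients of the claim: that $\sigma_\bullet$ acts as the identity on every $\HH_n(A)$, and that $\gl A=\infty$. Both clauses of $C0'$ then fail at once. The first clause asks for a nontrivial eigenvalue on infinitely many $\HH_n$; the second asks for $\sigma_\bullet\neq\id$ in infinitely many degrees.

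\emph{Step 1: $\sigma_\bullet$ is the Nakayama action.} By Proposition~\ref{prop:highercox}, for $A$ self-injective $\sigma_\bullet$ is the action on $\HH_\bullet(A)$ induced by the Nakayama automorphism $\nu$, via $\omega\cong{}_1A_\nu$. If $A$ is symmetric, $\omega\cong A$ as bimodules, so $\nu$ may be taken to be the identity; more generally $\nu$ is inner. Inner automorphisms act trivially on Hochschild homology: conjugation by a unit $u$ is homotopic to the identity on the Hochschild complex, by the classical homotopy built from inserting $u$ and $u^{-1}$. The same conclusion follows from the bimodule isomorphism ${}_1A_{\nu}\cong{}_1A_1$ given by $x\mapsto xu^{-1}$, which shows that the transported automorphism of $\HH_\bullet$ is the identity. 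Either way, $\sigma_\bullet=\id$ on every $\HH_n(A)$, so each $\chi_n(x)=(x-1)^{\dim\HH_n(A)}$ and no eigenvalue other than $1$ occurs. Taken in isolation this also falsifies the reformulation ``$\sigma_\bullet\ne\id$ for infinitely many $n$''.

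\emph{Step 2: the example.} Take $A=\kk[x]/(x^2)$. It is symmetric, with Frobenius form picking the coefficient of $x$, and being commutative it even has $\nu=\id$ on the nose, so Step 1 needs no homotopy argument here. Its global dimension is infinite because the simple module has the periodic minimal resolution $\cdots\xrightarrow{x}A\xrightarrow{x}A\to\kk$. It is worth recording, as a sanity check that $C0'$ genuinely fails rather than being vacuous, that $\HH_n(A)\neq0$ for infinitely many $n$. This follows from Theorem~\ref{thm:periodic}(3), since $A$ is periodic, symmetric and not semisimple, or directly from the $2$-periodic bimodule resolution.

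The only step needing care is Step 1 in the general symmetric case. There one must make sure the identification of $\sigma_\bullet$ with the Nakayama action is independent of the chosen Frobenius form, since $\nu$ is only defined up to inner automorphisms. I would handle this by the bimodule-isomorphism argument, which is manifestly choice-free. For the single example $\kk[x]/(x^2)$ no such issue arises.
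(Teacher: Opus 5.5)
Your proposal is correct and is essentially the paper's own argument: symmetric $\Rightarrow$ $\nu$ inner $\Rightarrow$ $\omega\cong A$ $\Rightarrow$ $\sigma_\bullet=\id$ on $\HH_\bullet(A)$, with $\kk[x]/(x^2)$ as the witness. Your additions (the homotopy argument for inner automorphisms, and reading $\gl A=\infty$ off the simple module rather than off $\Omega^2_{\Ae}(A)\cong A$) are harmless. One caveat, shared with the paper: both arguments use the convention of Proposition~\ref{prop:highercox} that $\sigma_\bullet$ is the bare Nakayama action, whereas $\sigma_A=\mathbb H(\omega[-1])$ taken literally picks up a sign from the shift (which is what makes $\chi_0$ the classical Coxeter polynomial), so it would give $\sigma_\bullet=-\id$ on symmetric algebras and $\kk[x]/(x^2)$ would then refute $C0'$ only in characteristic $2$.
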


\begin{proof}
For symmetric $A$ the Nakayama automorphism $\nu$ is inner, so $\omega\cong{}_1A_\nu\cong A$
as bimodules and the induced Serre-twist action $\sigma_\bullet$ is the identity
on $\HH_\bullet(A)$; thus every eigenvalue equals $1$ and the ``nontrivial
eigenvalue'' clause is empty in every degree. Yet $A=\kk[x]/(x^2)$ has
$\Omega^2_{\Ae}(A)\cong A$, so $\gl A=\infty$ and $\HH_n(A)\ne0$ for all $n$.
\end{proof}

\begin{remark}[a caveat on the internal equivalence]\label{rem:c0equiv}
The equivalence ``nontrivial eigenvalue $\iff\sigma_\bullet\ne\id$'' asserted
inside $C0'$ is itself valid only when $\sigma_\bullet$ is \emph{semisimple}
(e.g.\ $\operatorname{ord}\nu$ prime to $\operatorname{char}\kk$): in
characteristic $p$ a unipotent $\sigma_\bullet\ne\id$ has only the eigenvalue
$1$, so the two clauses of $C0'$ are not interchangeable in general. This is a
second, independent way the literal statement misbehaves.
\end{remark}

The correct repair restricts the quantifier to the locus where $\sigma_\bullet$
can be nontrivial in the first place.

\begin{conjecture}[corrected form $C0''$]\label{conj:C0prime}
For every self-injective algebra $A$ with $\gl A=\infty$ whose Nakayama
automorphism is \emph{not inner}, some eigenvalue $\ne1$ of $\sigma_\bullet$
occurs on $\HH_n(A)$ for infinitely many $n$.
\end{conjecture}

\begin{remark}[why the naive disjunctive repair is empty]\label{rem:c0disjunction}
One is tempted to weaken $C0'$ to a disjunction, ``$\sigma_\bullet\ne\id$ on
$\HH_n$ \emph{or} $\HH_n\ne0$, infinitely often''. This is empty: the first
disjunct presupposes the second (a nontrivial action requires a nonzero space
to act on), so the disjunction is logically equivalent to Han's conjecture
itself and carries no spectral content. Conjecture~\ref{conj:C0prime} avoids
this collapse by restricting to $\nu$ non-inner: on the symmetric (and more
generally inner-$\nu$) locus it is silent, as Proposition~\ref{prop:c0false}
demands: there its content is Han itself, settled on the periodic
sublocus. Where $C0''$ speaks it implies Han \emph{strictly}: a persistent
nontrivial eigenvalue forces $\HH_n\ne0$ in infinitely many degrees, whereas
Han also permits $\sigma_\bullet=\id$ throughout. The $\Lambda_q$ spectra
($\chi_n=x^2-(q^s+q^{-s})x+1$, $s=2\lfloor n/2\rfloor+1$) computed above are the
non-inner regime where the conjecture has teeth.
\end{remark}

\section{Consequences of the unification}\label{sec:consequences}

The plane is not merely a bookkeeping device: several theorems of this paper
\emph{cannot be formulated, let alone proved, inside any one of the five source
theories in isolation}. This section makes that precise (\S\ref{ss:onlyplane}),
extracts the computable certificates the unified viewpoint
yields for the Han programme (\S\ref{ss:certificates}), and states honestly
what the plane does \emph{not} settle (\S\ref{ss:limits}).

\subsection{Theorems visible only from the plane}\label{ss:onlyplane}

Each item below is of the form: \emph{an input from one source paper, fed
through an input from another, produces a statement neither could reach alone.}

\begin{enumerate}[label=\textup{(\alph*)},leftmargin=2.2em]
\item \emph{The support bridge $V_{\Ae}(\D A)=V_{\Ae}(A)$
(Lemma~\ref{lem:vda}).} The homology column $m=1$ has coefficients $\D A$; its
support variety is a priori unrelated to the cohomological variety $V(A)$ of
column $m=0$. The identification is possible \emph{only} because \cite[Cor.~6.3]{ArmCox}, equivalently Su\'arez-\'Alvarez
\cite{SuarezAlvarez}, says the Nakayama automorphism acts trivially on
$\HH^\bullet$: the twist $\alpha=1\otimes\nu$ relating $\D A$ to $A$ becomes
$H$-linear. Support theory (from the $\Fg$ world of \cite{EHSST}) plus
Nakayama-triviality (from the calculus world of \cite{ArmCox}) $\Rightarrow$ the
bridge. Neither field contains it: the two literatures were disjoint.
\item \emph{The $\Fg$/Han link (Theorem~\ref{thm:fg}).} That
$\D\bigl(\bigoplus_n\HH_n\bigr)$ is a finitely generated cap-module over
$\HH^{\ev}(A)$ needs the cap product of \cite{ArmCap} (to have an action at
all), the ladder Theorem~\ref{thm:ladder} (to make it unambiguous, left $=$
right), and the Snashall--Solberg/NWW finiteness \cite{NWW} (to make it
finitely generated). The resulting growth trichotomy for $\dim\HH_n$ is the
first general delimitation of where a Han counterexample can live; no prior
work connects $\Fg$ to Han.
\item \emph{The Tate reflection (Theorem~\ref{thm:refltate}).} Fixing the Han
column $m=1$ requires the identity $S_{\Ae}(A)\simeq\omega^{\Lotimes2}$ of \cite[Thm.~3.2]{ArmTau}, without which there is no reason
$m\mapsto2-m$ should be a symmetry. Stable Serre duality (from \cite{ARS})
$+$ the square of the Serre bimodule (from \cite{ArmTau}) $\Rightarrow$
self-duality of the Han column.
\item \emph{The cap support $\Vcap(A)$ (Theorem~\ref{thm:fg}(2)).} A support
theory for $\HH_\bullet$ \emph{as a module over $\HH^\bullet$} did not exist;
it is available only once the ladder makes $\HH_\bullet$ a symmetric
cap-module, i.e.\ only on the plane.
\end{enumerate}


\subsection{Three computable certificates}\label{ss:certificates}

The plane converts three qualitative dichotomies into effective tests
(illustrated in Section~\ref{sec:comp}).

\begin{enumerate}[label=\textup{(C\arabic*)},leftmargin=2.6em]
\item \emph{Growth-asymmetry $\Rightarrow\neg\Fg$} (Theorem~\ref{thm:fg}(4),
Corollary~\ref{cor:asymcert}). Whenever a computation exhibits homology growing
past cohomology (e.g.\ the quantum planes, where $\HH^\bullet$ dies and
$\HH_\bullet$ persists), the algebra provably violates $\Fg$. This is the
first \emph{computable certificate of $\Fg$-failure}: one runs the two
Poincar\'e series and compares growth classes.
\item \emph{The stable-twisted-center sweep} (Theorem~\ref{thm:periodic}(2)).
For a periodic algebra, Han is equivalent to the non-vanishing of some Tate
residue $\sExt^j_{\Ae}(A,\D A)$, $0\le j<\pi$; the degree-zero residue is the
finite linear-algebra datum $Z_\nu(A)/H_\nu(A)$. \emph{Any} periodic algebra of
infinite global dimension with all $\pi$ residues zero would be an outright
counterexample to Han; a systematic non-vanishing across the classification of
periodic algebras would prove Han for the entire periodic class. This is a
\emph{decidable} sub-search: implement $Z_\nu/H_\nu$ and the higher residues,
sweep the self-injective zoo.
\item \emph{Higher Coxeter spectra} (Definition~\ref{def:highercox},
Conjecture~\ref{conj:C0prime}). The polynomials $\chi_n(A)(x)$ are computable
derived invariants; on the non-inner-$\nu$ locus, the persistence of a
nontrivial $\sigma_\bullet$-eigenvalue implies Han for self-injective algebras
and is testable degree by degree. Example~\ref{ex:coxeter}
computes these spectra for the quantum complete intersection and thereby
corrects the original persistence conjecture (now $C0''$,
Conjecture~\ref{conj:C0prime}; see \S\ref{subsec:C0}).
\end{enumerate}

These certificates are the practical yield of the unification for
search programmes: they turn ``search for a
vanishing tail'' into three sharply posed, terminating linear-algebra queries.

\subsection{What the plane does \emph{not} do}\label{ss:limits}

Honesty demands three disclaimers.
\begin{itemize}[leftmargin=1.4em]
\item \emph{$\Fg$ is not removed.} The trichotomy (Theorem~\ref{thm:fg}) is
conditional on $\Fg$; and, as corrected in Remark~\ref{rem:notexcluded}, even
under $\Fg$ a vanishing tail (case (a)) is \emph{not} excluded. The general
$\Fg$ ``reverse-BGMS'' corridor (homology dying while cohomology grows,
inside $\Fg$) is open.
\item \emph{$\tp$ is not decided.} By Theorem~\ref{thm:asnu} the
self-injective $\Fg$ case reduces to $\ASnu$ for the single pair
$(A,\D A)$, and the tensor-product property $\tp$ for that pair implies it. The pair is diagonally symmetric, has equal supports, and is
Tate-self-dual, which excludes the \emph{known} failure modes of such
converses, but nothing here forces $\tp$, and a self-injective $\Fg$
algebra with $V(A,\D A)\subsetneq V(A)$ would be a counterexample.
\item \emph{The char-$p$ and non-graded flanks are untouched.} The $\chi$-law
inputs are char-$0$ statements; the loop-free periodic case and the non-graded
$r^3=0$ local case rest on certifying that a $2$-cycle class survives the
passage from $\gr A$ to $A$, a non-homogeneous cancellation the plane does not
control.
\end{itemize}

\section{Worked examples: the plane in action}\label{sec:examples}\label{sec:comp}

We develop worked examples illustrating each face of the plane, on meaningful
non-trivial algebras. All dimensions below are dimensions of Hochschild
(co)homology spaces with the indicated coefficients.

\subsection{The ladder theorem at the chain level}
For $\eta\in\HH^s(A)$ and a coefficient bimodule $M$, the two chain
operators on $C_\bullet(A,M)=M\otimes\bar A^{\otimes\bullet}$,
\begin{gather*}
\iota_L(f)\colon x\otimes a_1\cdots a_n\mapsto
\bigl(x\cdot f(a_1,\dots,a_s)\bigr)\otimes a_{s+1}\cdots a_n,\\
\iota_R(f)\colon x\otimes a_1\cdots a_n\mapsto
\bigl(f(a_{n-s+1},\dots,a_n)\cdot x\bigr)\otimes a_1\cdots a_{n-s},
\end{gather*}
preserve cycles and boundaries and induce equal maps on
$\HH_n(A,M)$ (up to one overall sign per block, an artifact of the chain
model) for all classes $\eta$ in full bases of $\HH^1$ and $\HH^2$, all
$n\le4$, on: the quantum complete intersection
$\Lambda_q=\kk\langle x,y\rangle/(x^2,\,y^2,\,yx-qxy)$, $q=-5$ (the
convention $yx=q\,xy$, used for $\Lambda_q$ throughout this paper; the relator
form $yx+q'xy$ of \cite{BGMS} corresponds to $q'=-q$), with
$M\in\{A,\ \D A,\ {}_1A_\nu,\ {}_1A_{\nu^2}\}$; and the non-self-injective
local algebra $\kk\langle x,y\rangle/(x^2,y^2,yx)$ with $M\in\{A,\D A\}$.
The case $M={}_1A_{\nu^2}$ is the first genuinely new instance
($s$-ladder at $m=2$); moreover
$\dim\HH_n(\Lambda_q,\D A)=\dim\HH_n(\Lambda_q,{}_1A_\nu)$ in every degree
(the Frobenius identification), and one recovers
$(2,2,1,0,0)=\D\HH^\bullet(\Lambda_q)$, the profile of \cite{BGMS}.

\subsection{The plane of a quantum complete intersection}
For $\Lambda_q$, $q=5$, the columns $\T^{p,m}=\HH^p(\Lambda_q,{}_1A_{\nu^m})$
for $-2\le m\le3$, $0\le p\le5$ are:
\[
\begin{array}{c|cccccc}
 & p=0&1&2&3&4&5\\
\hline
m=-2 & 1&0&0&0&1&2\\
m=-1 & 1&0&0&0&0&0\\
m=0  & 2&2&1&0&0&0\\
m=1  & 3&2&2&2&2&2\\
m=2  & 1&0&0&0&0&0\\
m=3  & 1&0&0&0&0&0
\end{array}
\]
Column $0$ is the finite cohomology of \cite{BGMS}; the Han column $m=1$
persists with the stable value $a+b-2=2$ of Bergh--Erdmann
\cite{BerghErdmann}: the plane displays Happel's failure and Han's
persistence simultaneously, one column apart. The degree-zero row is the
twisted-center ring $\bigoplus_m Z_{\nu^m}(\Lambda_q)$.

\textbf{The twisted/untwisted split and the corridor.} Reading the columns
together makes the mechanism transparent. The Frobenius identification
$\D A\cong{}_1A_\nu$ gives $\HH^n(A)\cong\D\HH_n(A,{}_1A_\nu)$: the finite
cohomology of \cite{BGMS} is the \emph{twisted} homology, which the generic-$q$
twist annihilates ($(2,2,1,0,0)\to0$). The \emph{untwisted} homology
$\HH_n(A,A)\equiv2$ is untouched, the Bergh--Erdmann core $a+b-2$. Thus
$\HH^\bullet$ has a vanishing tail ($\cx=0$) while $\HH_\bullet$ is
bounded-nonzero ($\cx=1$): homology outgrows cohomology, so by
Corollary~\ref{cor:asymcert} $\Lambda_q$ \emph{violates $\Fg$}, which is exactly
why the trichotomy does not constrain it. This is the counterexample corridor
made concrete: a Han counterexample must do to the \emph{untwisted} core what
the $q$-twist does to the twisted sector, and no $q$-twist (any $q$, any sign,
any characteristic) touches it. \textbf{Reading ``generic $q$'' over $\Fp$:}
over $\Fp$ every $q$ is a root of unity, so the profile
$(2,2,1,0,0,\dots)$ is faithful only up to degree $\approx\mathrm{ord}_p(q)$;
for $p=32003$ one has $\mathrm{ord}_{p}(-5)=16001$ and
$\mathrm{ord}_{p}(2)=32002$, both enormous, so the low-degree window is a
faithful characteristic-zero picture. A \emph{low} root of unity behaves
oppositely: $q=-1$ (order $2$) gives
$\HH_n=(3,4,6,8,10,12,14)$ and $\HH^n=(2,4,6,8,10,12)$, both growing; $q=+1$
(commutative $\kk[x,y]/(x^2,y^2)$) gives $\HH_n=(4,4,5,6,7,8,9)$.

\subsection{A symmetric period-4 algebra of dimension 21}\label{ex:dim21}
Let $A=\kk\langle x,y\rangle/(x^3,\ y^7-x^2,\ yx+xy)$, basis
$\{x^iy^j:0\le i\le2,\ 0\le j\le6\}$. Then
$\dim A=21$; $A$ is self-injective with Nakayama $\nu=\id$, so it is
\emph{symmetric}; the Frobenius form is symmetric; $\dim\HH_0=12$ and
$\dim\HH_1=11$. Indeed $A$ has a period-$4$ minimal $\Ae$-resolution
($\Omega^4_{\Ae}(A)\cong A$), giving $\dim\HH_0=12$ and $\dim\HH_n=11$ for all
$n\ge1$. This is an
instance of Theorem~\ref{thm:periodic}(3): symmetric $\Rightarrow
\D A\cong A\Rightarrow[\id_A]\ne0$ in $\sEnd_{\Ae}(A)$ (nonzero because $A$ is
not separable) $\Rightarrow\sExt^0_{\Ae}(A,\D A)\ne0\Rightarrow\HH_{k\pi}\ne0$
for all $k$: the homology \emph{could never} have died. Contrast
Example~\ref{ex:traceless}, where $\nu\ne\id$ lets the degree-$0$ slot vanish.
\emph{Characteristic caveat:} the period-$4$/$\HH_n\equiv11$ result is an
odd-characteristic statement, since at $p=2$ the relation $yx+xy$ degenerates
to $yx=xy$, giving a different, commutative algebra with growing homology.

\subsection{A non-self-injective algebra off the invertible
locus}\label{ex:nonselfinj}
Let $A=\kk\langle x,y\rangle/(x^2,y^2,\ yx)$, $\dim4$, basis $\{1,x,y,xy\}$
($yx=0$, $xy$ survives), \emph{not} Frobenius, so $\omega=\D A$ is not
invertible. For $n\le5$:
\[
\begin{array}{c|l}
\text{col.} & \dim\ (n=0,1,2,3,4,5)\\
\hline
m{=}0\ \ \HH_n(A) & 3,2,2,2,2,2\\
m{=}0\ \ \HH^n(A) & 2,2,3,5,7,9\\
m{=}1\ \ \HH_n(A,\D A) & 2,2,3,5,7,9\ (=\HH^n(A)\ \checkmark)\\
m{=}1\ \ \HH^n(A,\D A) & 3,2,2,2,2,2\ (=\HH_n(A)\ \checkmark)\\
m{=}2 & \Tor_0(\D A,\D A)=4,\ \Tor_1(\D A,\D A)=3,\ \HH_n(A,\Tor_1)=(1,2,5,8)
\end{array}
\]
The plane and its two glued columns $m=0,1$ exist and satisfy \emph{both}
Ext--Tor dualities (Lemma~\ref{lem:dual}) with no self-injectivity hypothesis.
Two contrasts with $\Lambda_q$:
\begin{itemize}
    \item (i) here $\HH_n(A)$ is bounded (core $2$, Han
holds) while $\HH^n(A)$ \emph{grows}, the \emph{opposite} asymmetry, exactly
the $\Fg$-compatible direction of Corollary~\ref{cor:asymcert} (cohomology outgrows
homology); deleting the skew relation flips the algebra from
self-injective-with-dying-cohomology to non-self-injective-with-growing-cohomology
while the untwisted core $2$ is untouched.
    \item (ii) The self-injective collapse of
column $m=2$ \emph{fails}: $\Tor_1(\D A,\D A)=3\ne0$, because $\omega$ is not
invertible so $\omega^{\Lotimes2}$ is a genuine complex with nonzero higher
homology, precisely the derived shadow the plane predicts off the
self-injective locus.
\end{itemize}

\subsection{A Dynkin preprojective algebra and the two regimes of the graded
criterion}\label{ex:prepro}
Let $\Pi(A_2)$ be the double quiver of $A_2$ ($a\colon1\to2$, $a^*\colon2\to1$)
with mesh relations $a^*a=0=aa^*$. Since the $A_2$ double quiver has exactly two
length-$2$ paths ($aa^*$, $a^*a$), the mesh ideal equals $\rad^2$, so
$\Pi(A_2)=\kk\Z_2/\rad^2$, $\dim4$, self-injective, with $\nu$ \emph{swapping}
the two vertices (the $A_2$ diagram involution), hence \emph{not} weakly
symmetric; periodic (the $\Ae$ $\Omega$-period of Dynkin preprojectives is $6$).
\[
\begin{array}{l|cccc}
A & \dim\HH_n\ (n=0..14) & Z_\nu & H_\nu & Z_\nu/H_\nu\\
\hline
\Pi(A_2)=\kk\Z_2/\rad^2 & 2,1,1,1,\dots & 2 & 1 & 1\neq0\\
\kk\Z_2/\rad^3 & 3,1,1,1,\dots & 3 & 2 & 1\neq0
\end{array}
\]
$\Pi(A_2)$ is not symmetric, so Theorem~\ref{thm:periodic}(3) does not apply;
but its degree-$0$ stable $\nu$-twisted center $Z_\nu/H_\nu=1\ne0$ forces
$\HH_{k\pi}\ne0$ and hence Han, \emph{in this instance} without symmetry. We
stress this is sufficient here, not a general criterion: the naive degree-$0$
form is false in general, as $\kk\Z_3/\rad^2$ (Example~\ref{ex:traceless}) has
$Z_\nu/H_\nu=0$ yet satisfies Han via higher residues. The pair
$\{\kk\Z_2/\rad^2,\ \kk\Z_3/\rad^2\}$ thus exhibits both regimes of the graded
criterion (Theorem~\ref{thm:periodic}(2)) inside one family.

\subsection{Higher Coxeter polynomials and the correction to
$C0$}\label{ex:coxeter}
For self-injective $A$, $\sigma_\bullet$ is the Nakayama action and
$\chi_n(x)=\det(x\,\id-\sigma_\bullet\mid\HH_n(A))$; the convention is the bare
$\nu$ with no degree shift, and the eigenvalue conclusions below are robust to
$\nu\leftrightarrow\nu^{-1}$ and any $\pm(-1)^n$ twist. For $\Lambda_{q=-5}$,
$\nu=\mathrm{diag}(1,q^{-1},q,1)$:
\[
\begin{array}{r|r r l}
n & \dim\HH_n & \text{eig.-}1\text{ mult.} & \chi_n(x)\\
\hline
0 & 3 & 1 & (x-1)(x-q)(x-q^{-1})\\
1 & 2 & 0 & x^2-(q+q^{-1})x+1\\
2,3 & 2 & 0 & x^2-(q^3+q^{-3})x+1\\
4,5 & 2 & 0 & x^2-(q^5+q^{-5})x+1\\
6 & 2 & 0 & x^2-(q^7+q^{-7})x+1
\end{array}
\]
For $n\ge1$, $\chi_n(x)=x^2-(q^s+q^{-s})x+1$ with $s=2\lfloor n/2\rfloor+1$, with
eigenvalues $\{q^s,q^{-s}\}$, a clean higher-Coxeter sequence (irreducible over
$\Q$ for generic $q$). Same shape for $q=2$; for $q=-1$,
$\chi_n=(x+1)^{\dim\HH_n}$. The eigenvalue-$1$ multiplicity is $0$ at every
$n=1,\dots,6$: this is the computation behind the correction
(Remark~\ref{rem:C0correction}) that first replaced Conjecture~\ref{conj:spectral}
by $C0'$, itself in turn refuted and corrected to $C0''$
(Conjecture~\ref{conj:C0prime}; \S\ref{subsec:C0}). Over $\Fp$ eigenvalue $1$
revives near $n\approx\mathrm{ord}_p(q)=16001$ for $q=-5$ (never for $q=2$,
whose order is even); in char $0$ with $q$ of infinite
order it never occurs for $n\ge1$ (granting the closed form). The operative
persistent invariant is $\sigma_\bullet\ne\id$, not a fixed class.

\subsection{The Kronecker algebra versus $\mathbb P^1$}\label{subsec:kron}
Let $A=\kk K_2$ be the Kronecker algebra (Example \ref{ex:P1}). On the
algebra side, the complex $\D A\otimes\bar A^{\otimes\bullet}\otimes\D A$
computing $\Tor^A_\bullet(\D A,\D A)$ \emph{as a bimodule} gives
\[
\Tor_0^A(\D A,\D A)=0,\qquad \dim_\kk\Tor_1^A(\D A,\D A)=12,
\]
matching $\D\Ext^1_{\Ae}(A,\Ae)$ of \cite[Example 4.7]{ArmTau}. Since
$T_0=0$, the spectral sequence of Theorem \ref{thm:ss} collapses to
$\T^{p,2}(A)\cong\HH^{p+1}(A,T_1)$ and, dually,
$\T^{p,-1}(A)\cong\D\HH_{p-1}(A,T_1)$. The coefficient (co)homology of the
$12$-dimensional bimodule $T_1$ is
\[
\HH^i(A,T_1)=(3,1,0,0),\qquad \HH_i(A,T_1)=(3,5,0,0),\qquad i=0,\dots,3,
\]
hence $\bigl(\T^{-1,2},\T^{0,2}\bigr)=(3,1)$ and
$\bigl(\T^{1,-1},\T^{2,-1}\bigr)=(3,5)$. On the geometric side, the Bott
formula on $\mathbb P^1$ (Example \ref{ex:P1}) predicts
\[
\T^{-1,2}=h^1(\mathcal O(-4))=3,\qquad
\T^{0,2}=h^1(\mathcal O(-2))=1,\]\[
\T^{1,-1}=h^0(\mathcal O(2))=3,\qquad
\T^{2,-1}=h^0(\mathcal O(4))=5,
\]
in perfect agreement; the columns $m=0,1$ agree directly
($\HH^\bullet=(1,3,0,\dots)$, $\HH_\bullet=(2,0,0,\dots)$). The reflection
$\T^{p,m}\cong\D\T^{-p,2-m}$ is visible in the numbers:
$(\T^{-1,2},\T^{0,2})=(3,1)\leftrightarrow(\T^{1,0},\T^{0,0})=(3,1)$ and
$(\T^{1,-1},\T^{2,-1})=(3,5)\leftrightarrow(\T^{-1,3},\T^{-2,3})=(3,5)$.

\subsection{Self-injective Nakayama algebras and stable twisted centers}
For $A=\kk\mathbb Z_v/\rad^\ell$ (self-injective Nakayama, periodic) and two
symmetric local algebras, the twisted center $Z_\nu$, the twisted Higman
ideal $H_\nu$, and low-degree Hochschild homology:
\[
\begin{array}{l|ccc|l}
A & \dim Z_\nu & \dim H_\nu & \dim Z_\nu/H_\nu & (\dim\HH_n)_{n\ge0}\\
\hline
\kk[x]/x^2 & 2 & 1 & 1 & \text{nonzero in all degrees}\\
\kk[x]/x^3 & 3 & 1 & 2 & \text{nonzero in all degrees}\\
\kk\mathbb Z_2/\rad^2 & 2 & 1 & 1 & (2,1,1,1,1,1,1,1)\\
\kk\mathbb Z_2/\rad^3 & 3 & 2 & 1 & (3,1,1,1,1)\\
\kk\mathbb Z_3/\rad^2 & 3 & 3 & \mathbf{0} & (3,0,1,1,0)\\
\Lambda_{q=5} & 3 & 1 & 2 & (3,2,2,2,2)\\
\text{exterior algebra }(q=1) & 3 & 1 & 2 & \\
\end{array}
\]
The row $\kk\mathbb Z_3/\rad^2$ is Example \ref{ex:traceless}. In every
periodic case with $Z_\nu/H_\nu\neq0$ the homology is nonzero in all
degrees, as Theorem \ref{thm:periodic}(2) requires; for
$\kk\mathbb Z_3/\rad^2$ the vanishing degrees $(1,4)$ and surviving degrees
$(2,3)$ pair under the Tate reflection.

\section{Possible directions}\label{sec:quest}

Two of the conjectures stated in the body single themselves out as the load-bearing
next steps, in the sense that each carries a concrete first target where the
machinery of the plane is already in place.

The first is the \emph{monodromy $=$ Coxeter} programme (Conjecture
\ref{conj:monodromy}). Rather than attack the identification $T=\sigma_\bullet^{\,m}$
on every column at once, the natural point of entry is the Han column $m=1$, where
the paracyclic monodromy is nothing but the dual of Connes' $B$ already constructed
in Remark~\ref{rem:columns}(ii): there the twisted paracyclic apparatus of
\S\ref{subsec:paracyclic} degenerates to an honest cyclic structure with its SBI
sequence and periodicity operator. Proving $m=1$ first (pinning the monodromy of
the Han column to $\sigma_\bullet$) would turn the Coxeter reading of the plane
into a theorem on the very column that governs Han's conjecture, and would isolate
exactly what remains to be shown on the higher columns.

The second is spectral persistence in its corrected form $C0''$ (Conjecture
\ref{conj:C0prime}) on the non-inner locus. Restricting the quantifier to
self-injective algebras whose Nakayama automorphism $\nu$ is not inner removes the
symmetric counterexamples that refuted the naive form, and leaves a statement
testable algebra by algebra whose truth would give Han for the entire non-inner
self-injective class. Because the dual Connes--$B$ operator on the Han column
already exists, the persistence of a nontrivial $\sigma_\bullet$-eigenvalue can be
examined directly on the columns; this is the most promising route from the
plane's spectral structure to Han for self-injective algebras.

\subsection*{Disclosure}
During the preparation of this work, the author used Claude, Anthropic's
Fable 5 model, for deep research, formulation of theorems, and drafts of
their proofs. The author reviewed and edited the output as needed and takes
full responsibility for the content of the published article.


\end{document}